\def\qed{\ifvmode\mbox{ }\else\unskip\fi\hskip 1em plus 10fill$\Box$}
\newtheorem{prelem}{{\bf Theorem}}
\newtheorem{theorem}{Theorem}[section]
\newtheorem{corollary}[theorem]{Corollary}
\newtheorem{definition}[theorem]{Definition}
\newtheorem{lemma}[theorem]{Lemma}
\newtheorem{example}[theorem]{Example}
\newtheorem{remarka}[theorem]{Remark}
\newenvironment{remark}{\begin{remarka}\rm}{\end{remarka}}
\newtheorem{problema}[theorem]{Problem}
\def\deg {{\rm deg}}
\def\L {{\Lambda}}
\def\R {{\mathbb R}}
\def\Z {{\mathbb Z}}
\def\N {{\mathbb N}}
\def\l {{\ell}}
\def\onev {\vec{\bf 1}}
\def\zerov {\vec{\bf 0}}
\title{\bf Chip-Firing and Riemann-Roch Theory for Directed Graphs}
\author{
{Arash Asadi\footnote{{Email: \href{mailto:aasadi@math.gatech.edu} {\texttt{aasadi@math.gatech.edu}}}}} \,\,\,\,\,\,\,\,\,\,\,\,  {Spencer Backman\footnote{{Email:
   \href{mailto:sbackman@math.gatech.edu} {\texttt{sbackman@math.gatech.edu}}}}}\\
   \\
{\small\it School of Mathematics}\\
{\small\it Georgia Institute of Technology} \\
{\small\it Atlanta, GA, USA} \\}
\begin{document}
\maketitle

\begin{abstract}
We investigate Riemann-Roch theory for directed graphs. The Riemann-Roch criteria of Amini and Manjunath is generalized to all integer lattices orthogonal to some positive vector.  Using generalized notions of a $v_0$-reduced divisor and Dhar's algorithm we investigate two chip-firing games coming from the rows and columns of the Laplacian of a strongly connected directed graph.  We discuss how the ``column" chip-firing game is related to directed $\vec{G}$-parking functions and the ``row" chip-firing game is related to the sandpile model.   We conclude with a discussion of arithmetical graphs, which after a simple transformation may be viewed as a special class of directed graphs which will always have the Riemann-Roch property for the column chip-firing game. Examples of arithmetical graphs are provided which demonstrate that either, both, or neither of the two Riemann-Roch conditions may be satisfied for the row chip-firing game.
\end{abstract}

\tableofcontents

\section{Introduction \label{intro_sec}}
We use standard graph theoretic and discrete geometric notation and terminology, which may be found in~\cite{Bol98, Die05} and ~\cite{Mat02, Sch98} respectively. All graphs in this paper are finite and have no loops although they may be directed or have multiple edges (multi-graphs). We refer the reader to Section~\ref{notedef_sec} for some basic notation and definitions.

Let $R$ be a positive $n+1$ dimensional vector and $\L_R=\{D \in \Z^{n+1}: D \cdot R=0\}$.  Fix $\L$, a full-dimensional sub-lattice of $\L_R$. As noted in~\ref{notedef_sec}, we refer to an element $D \in \Z^{n+1}$ as a divisor. We say divisors $D, D' \in \Z^{n+1}$ are {\it equivalent}, denoted by $D \sim D'$, if and only if $D-D'\in \L$. We say a divisor $E \in \Bbb Z^{n+1}$ is {\it effective} if $E \geq {\zerov}$.
For any divisor $D \in \Z^{n+1}$, the {\it linear system} associated to $D$ is the set $|D|$ of all effective divisors which are equivalent to $D$, i.e.,
$|D|=\{E \in \Z^{n+1}: E \geq {\zerov}, E \sim D\}$ and the {\it degree} of $D$, written $deg_R(D)$, is given by $D\cdot R$.

\begin{definition}
\label{rank_function_def}
For any divisor $D \in \Z^{n+1}$, define the rank of $D$, denoted by $r(D)$, as follows: $$r(D)=\min\{\deg_R(E): |D-E|=\emptyset, E \geq {\zerov}\}-1.$$
\end{definition}

Baker and Norine~\cite{BN07} developed a graph theoretic analogue of the Riemann-Roch formula, originally by studying a certain unrestricted chip-firing game on graphs.  Geometrically their result states that for the lattice $\L_G$ spanned by the rows of the Laplacian of a finite undirected graph $G$, there exists a {\it canonical} divisor $K \in \Z^{n+1}$ whose $i$-th entry is $\deg(v_i)-2$, of degree $2g-2$ (where $g=|E(G)|-|V(G)|+1$) such that for any divisor $D \in \Z^{n+1}$,
\begin{equation}
\label{RR_Baker_Norine}
r(D)-r(K-D)=\deg_{\onev}(D)+1-g.
\end{equation}
Many of their results have since been generalized to a variety of objects including tropical curves, metric graphs and edge weighted graphs~\cite{GK08, HKN07, Luo08, MZ07}. Recently Amini and Manjunath~\cite{AM09} showed that by viewing a the chip-firing game of Baker and Norine geometrically as a walk through the lattice spanned by its Laplacian, a pair of necessary and sufficient Riemann-Roch conditions, equivalent to those of Baker and Norine, could be generalized to all sub-lattices of the lattice $\L_{\onev}$.  They refer to these conditions as {\it uniformity} and {\it reflection invariance}.

In Section~\ref{lat_sec}, Theorem~\ref{RR_formula_equiv_U_RI_thm} shows that the criteria of Amini and Manjunath~\cite{AM09} naturally extends to any full-dimensional sublattice of $\L_R$. Lorenzini~\cite{Lor09} gives an alternate Riemann-Roch criteria for such lattices.  Our approach differs from his in that we first give a specific rank function (Definition~\ref{rank_function_def}) and use this to define a pair of necessary and sufficient conditions for a lattice $\L$ to have the Riemann-Roch property. Lorenzini~\cite{Lor09} instead says that such a lattice has the Riemann-Roch property if there exists a {\it suitable} rank function (\S 2.1 in~\cite{Lor09}), i.e., one which would allow for a Riemann-Roch formula (\ref{RR_Baker_Norine}) satisfying certain desirable properties.  We conclude section 2 with Theorem~\ref{thm:RR_R_one} showing that a full-dimensional lattice $\L \subseteq \L_R$ has the Riemann-Roch property if and only if $\mathcal{R}\L \subseteq \L_{\onev}$ does, where $ \mathcal{R}=diag(r_0, \dots, r_{n})$.  This result is later employed when studying the column chip-firing game and when discussing the relationship of chip-firing on arithmetical graphs to the row chip-firing game on associated direct graphs.

Various chip-firing games on graphs have been studied in ~\cite{Big99, BL92, BLS91, HLMPPW08, GR01, Mer05, Mer97, Spe93, Tar88, Heu88}. Baker and Norine~\cite{BN07} introduced an unrestricted chip-firing game on undirected graphs to prove their Riemann-Roch formula.  Their game is as follows: begin with a graph and an integer number of ``chips'' at each vertex.  A vertex either {\it borrows} a chip along each of its edges from its neighbors or it {\it fires}, sending a chip along each of its edges to its neighbors.  The objective of the game is to bring all of the vertices out of debt.  In Section~\ref{chip_sec}, we investigate two separate generalizations of the unrestricted chip-firing game on undirected graphs to directed graphs.  To understand the two different generalizations of this game to directed graphs we should study how this game relates to the graph Laplacian.  The question of whether a configuration $D$, also called a divisor, can be brought out of debt by some sequence of firings and borrowings is the equivalent to the question of whether $|D|\neq \emptyset$, i.e., $r(D)\geq 0$ for the lattice $\L_G$.  This is because a sequence of chip-firings corresponds to translation by a lattice point in $\L_G$.  Let $\vec{G}$ be a directed graph whose adjacency matrix $\vec{A}$ with $i$, $j$th entry $\vec{A}_{i,j}$ is the number of edges directed from $i$ to $j$.  Let $\vec{\mathcal{D}}=diag(\deg^+(v_i), \dots, \deg^+(v_n))$ where $\deg^+(v)$ denotes the number edges leaving vertex $v \in V(\vec{G})$. We call the matrix $\vec{Q}=\vec{\mathcal{D}}-\vec{A}$ the {\it Laplacian matrix} of the directed graph $\vec{G}$. Note that this directed Laplacian is symmetric if and only if it is the Laplacian of a graph with bidirected edges, i.e.,  an undirected graph.  We investigate $r(D)$ and the Riemann-Roch formula for the lattice spanned by the rows of $\vec{Q}$ and the lattice spanned by the columns of $\vec{Q}$.  For both of these lattices, it is equivalent to study certain chip-firing games on $\vec{G}$.  We note that throughout the paper the directed graphs being studied are constrained to be strongly connected, i.e., for any two vertices $i, j \in V(\vec{G})$, there exists a directed path from $i$ to $j$.

Studying the lattice spanned by the rows of the directed Laplacian is equivalent to studying the row chip-firing game in which if a vertex fires, it will send a chip along each of its outgoing edges.  In [2], an important object, called a $v_0$-reduced divisor, is introduced.  Essentially this is a configuration, where every vertex is out of debt with the possible exception of $v_0$ and there is no way of "pushing" any money towards $v_0$.  We generalized this notion of a $v_0$ reduced divisor to the row chip-firing game on a strongly connected directed graph in Section~\ref{reduceddiv_sec}. In Section~\ref{subsec:Dhar_alg}, we generalize Dhar's Algorithm, which Baker and Norine used implicitly in~\cite{BN07}.  Dhar's algorithm allows one to check whether a divisor whose entries are nonnegative for all vertices other than $v_0$ is $v_0$-reduced and gives, when the divisor is reduced, all of the equivalent $v_0$-reduced divisors (for the case of directed graphs, a $v_0$-reduced divisor is no longer in general unique).  When the divisor is found to not be $v_0$-reduced, a firing is obtained, which will bring it ``closer'' to some $v_0$-reduced divisor.  In section 4 we present examples which show that lattice spanned by the rows of $\vec{Q}$ may or may not have the Riemann-Roch formula.

We say a directed graph has the strong Riemann-Roch property for directed graphs if it has the Riemann-Roch property and it has a canonical vector $K$ whose $i$th entry $K(v_i)$ is $\deg^+(v_i)-2$.
We then mention a connection between the sandpile model and the Riemann-Roch property for the row chip-firing game in Section~\ref{subsec:sandpile}.  The directed sandpile model is a constrained version of the row chip-firing game where we restrict our attention to effective divisors.  We fire vertices only when they have at least as many chips as their outdegree (so that the divisor remains effective).  While many authors require a global sink and ignore the number of chips at this vertex.  Because we are studying strongly connected digraphs it is sufficient for our discussion to simply require that a specified vertex $v_0$ not fire.  A divisor $D$ is stable if stable if no vertices may fire and a stable divisor $D$ is recurrent if for every other divisor there exists a way of adding chips to vertices after which the divisor will stabilized to $D$.  We show that for a directed graph $\vec{G}$, the lattice $\L_{\vec{G}}$ has the strong Riemann-Roch property for directed graphs if and only if for every $v_0$-recurrent sandpile configuration $D$, which is minimal with respect to dominance away from $v_0$, there exists $D'=D-ke_0, k \in \Z_{\geq0}$, which is a continuous extreme divisor.  The notion of a continuous extreme divisor is introduced in section 2 and is equivalent to saying that there exist $E_i \in \Z_{\geq 0}$ for $0 \leq i \leq n$ such that $E_i(v_i)=0$ and $E_i(v_j)>0$ for $i \neq j$ and $D' \sim E_i$.  We note that $v_0$-reduced divisors, their connection to $v_0$-recurrent sandpile configurations and the generalized Dhar's algorithm were independently discovered by Speer~\cite{Spe93} although he was not aware of the connection with Riemann-Roch theory.

Studying the the lattice spanned by the columns is equivalent to studying the column chip-firing game in which if a vertex borrows, it sends a chip along each of its incoming edges and loses a number of chips equal to its outdegree.  The number of chips is not conserved, but if we restrict our attention to strongly connected digraphs then we find that there exists a canonical set of currencies, which are integer multiples of some universal currency, with exchange rates so that the game is conservative.  In Section~\ref{subsec:G-parking}, we explain that the $v_0$-reduced divisors for this game are precisely the directed $G$-parking functions studied in ~\cite{CP05}.  We show that when studying the column chip-firing game on a strongly connected graph, it is equivalent to study the row chip-firing game on an associated Eulerian directed graph, that is, a directed graph for which each vertex has the same number of outgoing and incoming edges.  We also mention how Dhar's algorithm can be run on a divisor in the column chip-firing game without any serious revision.

We then consider the case of {\it arithmetical graphs} in Section~\ref{AGrpahs}.  An arithmetical graph is an undirected multigraph along with a vector $R \in \N^{n+1}$, with $R= (r_0, ..., r_n)$, where $r_i$ is the weight of vertex $v_i$ subject to the constraint that the sum of the weights of the vertices adjacent to $v_i$ (counting with multiplicity equal the number of edges shared with $v_i$) you obtain $\delta_ir_i$ for some $\delta_i \in \N$.  We define the Laplacian of an arithmetical graph to be the same as for a standard multigraph, but with the $i$th entry along the diagonal equal to $\delta_i$ instead of the degree of $v_i$.  Lorenzini~\cite{Lor89} introduced arithmetical graphs as a way of studying the intersection matrices of degenerating curves, which encode some of the discrete data associated with the degeneration.  In this paper our interest in arithmetical graphs is derived from the fact that they form a class of vertex weighted graphs whose Laplacian spans an $n$-dimensional sub-lattice of $\L_R$.
Indeed, Chung and Langlands~\cite{CL96} introduced a Laplacian matrix for a graph with weights on its vertices, and noted in~\cite{Lor09} that if for all $0 \leq i \leq n$ the weight of the vertex $v_i$ is the square of the positive integer $r_i$, the Laplacian matrix introduced in~\cite{CL96} is the same as the one defined above.  The chip-firing game of Baker and Norine extends to arithmetical graphs by assigning to each vertex its own currency, interpreting each vertex's multiplicity as the integer exchange rate between this vertex's currency and the universal chip currency.  This is very similar to the notion of currencies employed when studying the column chip-firing game.  In doing so we are able to give a combinatorial interpretation of the geometric definitions and statements of Section~\ref{lat_sec} for arithmetical graphs.

We may obtain from an arithmetical graph $(G,R)$ with Laplacian $Q$, the Laplacian $\vec{Q}=Q\mathcal{R}$ (where $ \mathcal{R}=diag(r_0, \dots, r_{n})$) of a closely related directed graph.  In this way we may view arithmetical graphs as a special type of directed graph, particularly since this coordinatewise scaling reduces the chip-firing game for arithmetical graphs to the row chip-firing game for directed graphs and preserves the Riemann-Roch property by Theorem~\ref{thm:RR_R_one}.  In Theorem~\ref{asdf} we show that the all of the associated directed graphs have the Riemman-Roch property for the column chip-firing game.

Given an arithmetical graph $(G,R)$ we define $g_0$ by the formula $2g_0-2 = \sum_{i=0}^n r_i(\delta_i-2)$.  See~\cite{Lor89} for a simple that $g_0$ is integral and note that $g_0$ is $g$ for a graph
 $(G,\onev)$.  As an application of the tools developed in section 3 we give a combinatorial proof of Proposition 4.2 from~\cite{Lor09}, which states that $g_{max} \leq g_0$ and if $g_{min}=g_{max}= g_0$ then $(G,R)$ has the Riemann-Roch property (and in particular the associated directed graph has the Riemann-Roch property).  The first half of this statement, in the language of chip-firing, says that if there are $g_0$ chips present in an arithmetical graph then there exists a winning strategy thus generalizing the result of Baker and Norine for arithmetical graphs.  The original proof of this result due to Lorenzini was algebro-geometric in nature, employing Riemann-Roch formula for curves.

We conclude with a discussion of some examples of arithmetical graphs, which demonstrate that either, both, or neither of the two Riemann-Roch conditions may be satisfied for an arithmetical graph.
\subsection{Basic Notations and Definitions \label{notedef_sec}}
For any two vectors $x,y \in \Bbb R^{n+1}$, let $x \cdot y$ denote the inner product of $x$ and $y$.
For any $x=(x_0, \dots, x_n)^T \in \Bbb R^{n+1}$, define $x^+=(x^+_0, \dots, x^+_n)^T \in \R_+^{n+1}$ and  $x^-=(x^-_0, \dots, x^-_n)^T \in \R_-^{n+1}$ to be the {\it positive part} and {\it negative part} of $x$ where $x=x^++x^-$ and $x_i^+x_i^-=0$, for all $0 \leq i \leq n$. Define $\deg_R(x)=R \cdot D$ and call it the {\it degree} of $x$. We denote $\deg_R(x^+)$ by $\deg^+_R(x)$ and we call it the {\it degree plus} of $x$.\\
Assume ${\zerov}$ and ${\onev}$ are the vectors in $\R^{n+1}$  all of whose coordinates are $0$ or $1$, respectively. For any $x=(x_0, \dots, x_n)^T \in \R^{n+1}$, we say $x \geq {\zerov}$ ($x > {\zerov}$) if and only if for all $0 \leq i \leq n$, $x_i \geq 0$ ($x_i > 0$). We define a {\it partial order} in $\R^{n+1}$ as follows: for any $x,y \in \R^{n+1}$, we say $x \geq y$ ($x > y$) if and only if $x-y \geq {\zerov}$ ($x-y > {\zerov}$).
For any vector $x \in \Bbb R^{n+1}$, define $C^+(x)=\{y \in \Bbb R^{n+1}: y \geq x\}$ and $C^-(x)=\{y \in \Bbb R^{n+1}: x \geq y\}$. We denote the standard basis for $\R^{n+1}$ by $\{e_0, \dots, e_n\}$. Suppose that $R \in \N^{n+1}$ is a vector, and define $H_R=\{x \in \Bbb R^{n+1}: R \cdot x=0\}$. Let  $\L_R=H_R \cap \Z^{n+1}$ be the integer lattice in the hyperplane $H_R$ where $R \in \N^{n+1}$. Let $\|\cdot\|$ denote the $\ell^2$-norm, i.e., $\|x\|= \sqrt{x \cdot x}$, for all $x \in \R^{n+1}$.

Let $G$ be graph and let $\{v_0, \dots, v_n\}$ be an ordering of vertices of $G$. Let $Div(G)$ be the free Abelian group on the set of vertices of $G$. By analogy with the Riemann surface case as noted also in~\cite{BN07}, we refer to elements of $Div(G)$ as {\it divisors} on $G$. In the case that the graph $G$ is implied by context, we simply refer to elements of $Div(G)$ as divisors. Because there is a fixed ordering on vertices of $G$, we think of an element $\alpha \in Div(G)$, which is a formal integer linear combinations of vertices of $G$, as a vector $D=(d_0, \dots, d_n) \in \Z^{n+1}$ where  $d_i$ is the coefficient of $v_i$ in $\alpha$ for all $0 \leq i \leq n$. We denote to the $i$th coordinate of $D$ by $D(v_i)$, for all $0 \leq i \leq n$.  We refer to both vectors in $\Z^{n+1}$ and elements of $Div(G)$ as divisors.
\section{Riemann-Roch Theory for Sub-lattices of $\L_R$ \label{lat_sec}}
\subsection{Preliminaries}

We remark that many of the proofs and statements presented in this section are similar to the ones which appeared in Amini and Manjunath~\cite{AM09}'s work.  Essentially, what is being demonstrated is that if one replaces each statement about lattices orthogonal to the all one's vector with the same statement for lattices orthogonal to some fixed positive vector, the proofs will go through without much extra effort.  This in itself is not a very strong observation, but it is necessary for proving Theorem~\ref{RR_formula_equiv_U_RI_thm} and Theorem~\ref{thm:RR_R_one}, which are used several times in the proceeding sections so, for the sake of completeness, we have decided to provide all of the necessary lemmas with proofs.

Throughout this section, $R$ will denote a vector in $\N^{n+1}$.
\begin{definition}
\label{sigma_region_def}
 Let $\L \subseteq \L_R$ be a sub-lattice of rank $n$. Define
$$\Sigma(\L)=\{D \in \Bbb Z^{n+1}: D \not \geq p {\hbox{ for all }} p \in \L\},$$
$$\Sigma_{\Bbb R}(\L)=\{x \in \Bbb R^{n+1}: x \not \geq p {\hbox{ for all }} p \in \L\}.$$
\end{definition}
Note that the set $\Sigma(\L)$ defined in Definition~\ref{sigma_region_def} is the negative of the {\it Sigma region} set defined by Amini and Manjunath~\cite{AM09}.
We denote by $\overline{\Sigma}_{\Bbb R}(\L)$ the topological closure of the set $\Sigma_{\Bbb R}$ in $\Bbb R^{n+1}$. Let $B(x,r)=\{y \in \R^{n+1}: \|y-x\|\leq r\}$ denote the ball of radius $r$ with center at $x$. For any set $S \subset \Bbb R^{n+1}$, let $int(S)$ denote the {\it relative interior} of $S$.

\begin{lemma}
\label{sigma_closure}
If $\L \subseteq \L_R$ is a sub-lattice of rank $n$, then $$\overline{\Sigma}_{\Bbb R}(\L)=\{x \in \Bbb R^{n+1}: x \not > p, {\hbox{ for all }} p \in \L\}.$$
\end{lemma}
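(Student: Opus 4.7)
The plan is to prove the equality by showing two inclusions between the closed set $\overline{\Sigma}_{\mathbb{R}}(\L)$ and the set $B := \{x \in \mathbb{R}^{n+1} : x \not> p \text{ for all } p \in \L\}$ on the right-hand side. The two ingredients are (i) observing that $B$ is already closed and sits above $\Sigma_{\mathbb{R}}(\L)$, and (ii) a simple perturbation argument pushing a point of $B$ into $\Sigma_{\mathbb{R}}(\L)$.

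For the inclusion $\overline{\Sigma}_{\mathbb{R}}(\L) \subseteq B$, I would first note that $\Sigma_{\mathbb{R}}(\L) \subseteq B$: if $x \not\geq p$ then certainly $x \not> p$, since strict coordinatewise inequality implies weak coordinatewise inequality. It then suffices to check that $B$ is closed in $\mathbb{R}^{n+1}$. Its complement is $\bigcup_{p \in \L}\{x : x > p\}$, and each set $\{x : x > p\}$ is an open orthant translated by $p$, hence open; so the complement is open and $B$ is closed. Therefore $\overline{\Sigma}_{\mathbb{R}}(\L) \subseteq \overline{B} = B$.

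For the reverse inclusion $B \subseteq \overline{\Sigma}_{\mathbb{R}}(\L)$, fix $x \in B$ and set $x_\varepsilon := x - \varepsilon \onev$ for $\varepsilon > 0$. I claim $x_\varepsilon \in \Sigma_{\mathbb{R}}(\L)$ for every $\varepsilon > 0$. Indeed, if one had $x_\varepsilon \geq p$ for some $p \in \L$, then $x_i \geq p_i + \varepsilon > p_i$ for all $i$, giving $x > p$ and contradicting $x \in B$. Since $x_\varepsilon \to x$ as $\varepsilon \to 0^+$, the point $x$ lies in $\overline{\Sigma}_{\mathbb{R}}(\L)$, completing the proof.

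The only thing to double-check is that the perturbation genuinely lives in $\mathbb{R}^{n+1}$ and that the contradiction above is sharp: the point is that shifting down by $\varepsilon$ in every coordinate converts the weak inequality defining membership in the complement of $\Sigma_{\mathbb{R}}(\L)$ into the strict inequality defining membership in the complement of $B$. There is no real obstacle here; the argument is a standard interior/closure duality between weak and strict coordinatewise inequalities, and the hypothesis that $\L$ has rank $n$ (or even that $R \in \mathbb{N}^{n+1}$) is not used in the proof itself.
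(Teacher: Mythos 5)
Your proof is correct and follows essentially the same route as the paper: one inclusion via openness of the sets $\{x : x > p\}$ (the paper phrases this as a ball around $x$ staying above $p$), and the other via the perturbation $x - \varepsilon\onev$, which the paper runs in contrapositive form as $x - \tfrac{\delta}{2}\onev \geq p$ implying $x > p$. No gaps.
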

\begin{proof}
{
Suppose $x \in \Bbb R^{n+1}$ such that $x>p$ for some $p \in \L$. Thus there exists $\delta>0$ such that for all $y \in B(x,\delta)$, $y>p$. Thus $x \not \in \overline{\Sigma}_{\Bbb R}(\L)$. Now, suppose $x \not \in \overline{\Sigma}_{\Bbb R}(\L)$. Then there exists $\delta>0$ and $p \in \L$ such that $x-{\delta \over 2}{\onev} \geq p$. Hence $x>p$, and this completes the proof of the lemma.
}
\end{proof}
\begin{lemma}
\label{sigma_closure_sigma_lem}
If $D \in \Bbb Z^{n+1}$ then $D \in \Sigma(\L)$ if and only if $D+{\onev} \in \overline{\Sigma}_{\Bbb R}(\L)$.
\end{lemma}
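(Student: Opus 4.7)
The plan is to combine the characterization of $\overline{\Sigma}_{\mathbb{R}}(\L)$ given by Lemma~\ref{sigma_closure} with the simple observation that for integer vectors, strict inequality after shifting by $\onev$ is equivalent to weak inequality before the shift. Concretely, using Lemma~\ref{sigma_closure}, the claim reduces to showing that for $D \in \mathbb{Z}^{n+1}$,
\[
\bigl(D \not\geq p \text{ for all } p \in \L\bigr) \iff \bigl(D+\onev \not> p \text{ for all } p \in \L\bigr),
\]
and I would prove each direction by contrapositive.

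For the forward direction, I would suppose that $D+\onev > p$ for some $p \in \L$. Since $p \in \L \subseteq \L_R \subseteq \mathbb{Z}^{n+1}$, the vector $D+\onev - p$ has integer entries, all of which are strictly positive, hence all at least $1$. Subtracting $\onev$ coordinatewise gives $D - p \geq \zerov$, that is, $D \geq p$, so $D \notin \Sigma(\L)$.

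For the reverse direction, I would suppose that $D \geq p$ for some $p \in \L$. Then $D - p \geq \zerov$, so $D+\onev - p \geq \onev > \zerov$ coordinatewise, hence $D+\onev > p$. By Lemma~\ref{sigma_closure}, this shows $D+\onev \notin \overline{\Sigma}_{\mathbb{R}}(\L)$.

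I do not expect any real obstacle here: the entire content of the lemma is the integrality trick that converts $\geq$ for integer vectors into $>$ for their $\onev$-shifts, together with the already-proved reformulation of $\overline{\Sigma}_{\mathbb{R}}(\L)$ in Lemma~\ref{sigma_closure}. The only thing to be careful about is to invoke Lemma~\ref{sigma_closure} explicitly so that the closure is handled correctly, rather than treating $\overline{\Sigma}_{\mathbb{R}}(\L)$ as a purely topological object.
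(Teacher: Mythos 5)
Your proposal is correct and follows essentially the same route as the paper: both directions are handled by contrapositive, using Lemma~\ref{sigma_closure} to characterize $\overline{\Sigma}_{\Bbb R}(\L)$ and the integrality observation that $D+\onev > p$ with $D,p \in \Bbb Z^{n+1}$ forces $D \geq p$. No gaps.
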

\begin{proof}
{
If $D \not \in \Sigma(\L)$, then there exists $p \in \L$ such that $D \geq p$. Hence $D+{\onev} > p$ and by Lemma~\ref{sigma_closure} $D+{\onev} \not \in \overline{\Sigma}_{\Bbb R}(\L)$. If $D+{\onev} \not \in \overline{\Sigma}_{\Bbb R}(\L)$ then Lemma~\ref{sigma_closure} implies that $D+{\onev} > p$ for some $p \in \L$. Since $D,p \in \Bbb Z^{n+1}$, it follows that $D \geq p$ and this implies that $D \not \in \Sigma(\L)$.
}
\end{proof}
Suppose $R=(r_0, \dots,r_{n}) \in \R^{n+1}_+$ and $x=(x_0, \dots,x_n) \in \R^{n+1}$. Define $\|x\|_R=\sum_{i=0}^n r_i|x_i|$. It is easy to see that $\| \cdot \|_R$ is a norm on $\Bbb R^n$. For any two points $x,y \in \Bbb R^{n+1}$, we define $dist_R(x,y)=\|x-y\|_R$.  One can consider $\| \cdot \|_{R}$ as a {\it weighted taxi-cab} distance.  For any set $S \subseteq \R^{n+1}$ and $p \in \R^{n+1}$, we define $dist_R(p,S)=\inf\{dist_R(p,x): x \in S\}$.
Observe that $r(D)=-1$ if $D$ is not equivalent to any effective divisor and $-1 \leq r(D) \leq \deg_R(D)$.
\begin{lemma}
\label{distance_rank_lem}
If $D \in \Bbb Z^{n+1}$ is a divisor then
\begin{itemize}
 \item[(i)] $r(D)=-1$ if and only if $D \in \Sigma(\L)$.
 \item[(ii)] $r(D)=dist_R(D,\Sigma(\L))-1=\min\{dist_R(D,p): p \in \Sigma(\L)\}-1$.	
\end{itemize}
\end{lemma}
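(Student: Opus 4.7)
The plan is to unwind Definition~\ref{rank_function_def} and translate both statements into the geometry of $\Sigma(\L)$. For part (i), note that $\deg_R(E) = R \cdot E \geq 0$ whenever $E \geq \zerov$ (using $R > \zerov$), so the minimum in the definition of $r(D)$ equals $0$ exactly when $E = \zerov$ is feasible, i.e.\ when $|D| = \emptyset$. The equivalence $|D| = \emptyset \Leftrightarrow D \in \Sigma(\L)$ is immediate from the definitions: $|D| \neq \emptyset$ says $D - p \geq \zerov$ for some $p \in \L$, which is exactly the negation of $D \in \Sigma(\L)$.

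For part (ii), the case $D \in \Sigma(\L)$ is trivial since both sides reduce to $-1$ (witnessed by $p = D$). Otherwise, using part (i), I would first rewrite
\[
r(D) + 1 \;=\; \min\{\deg_R(E) : E \geq \zerov,\ D - E \in \Sigma(\L)\} \;=\; \min\{\|E\|_R : E \geq \zerov,\ D - E \in \Sigma(\L)\},
\]
where the second equality uses $\|E\|_R = \sum_i r_i E_i = \deg_R(E)$ for $E \geq \zerov$. The target quantity is $\min_{p \in \Sigma(\L)} \|D - p\|_R$; to identify the two minima, the easy direction sends a competitor $E$ on the left to $p := D - E \in \Sigma(\L)$, giving $\|D - p\|_R = \|E\|_R$.

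For the reverse inequality I would invoke the fact that $\Sigma(\L)$ is downward closed in the coordinate-wise order: if $q \leq p$ and $q \geq p^\ast$ for some $p^\ast \in \L$, then $p \geq p^\ast$ as well, contradicting $p \in \Sigma(\L)$. Given any $p \in \Sigma(\L)$, set $E := (D - p)^+ \geq \zerov$; then the coordinate identity $D - (D-p)^+ = \min(D, p)$ shows $D - E \leq p$, so $D - E \in \Sigma(\L)$ by downward closure, while $\|E\|_R = \|(D-p)^+\|_R \leq \|D - p\|_R$. Finally, to see the minima are attained rather than just infima, taking $p = D - N e_0$ for $N$ large forces $R \cdot p < 0$, which places $p$ in $\Sigma(\L)$ (any $q \in \L$ with $q \leq p$ would satisfy $0 = R \cdot q \leq R \cdot p < 0$); the objective is integer and non-negative, so the minimum is realized. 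I expect no serious obstacle; the main conceptual point is the downward closure of $\Sigma(\L)$ together with the coordinate identity above, which jointly guarantee that truncating an arbitrary witness $p$ to $\min(D, p)$ preserves membership in $\Sigma(\L)$ while only decreasing the $R$-distance.
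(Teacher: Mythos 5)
Your proof is correct and follows essentially the same route as the paper: part (i) is the same unwinding of the definitions, and in part (ii) your truncation of a witness $p\in\Sigma(\L)$ to $D-(D-p)^+=\min(D,p)$ via downward closure is exactly the paper's step of replacing $p$ by $p-e_i$ in coordinates where $(D-p)_i<0$, just done in one move instead of unit steps. Your explicit attainment argument (integrality of $\|\cdot\|_R$ on $\Z^{n+1}$ and nonemptiness via $D-Ne_0$) is a slightly more careful version of the paper's brief closedness remark, but the substance is the same.
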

\begin{proof}
{
 \begin{itemize}
  \item [(i)]For $D \in \Bbb Z^{n+1}$, $r(D)=-1$ if and only if for all $p \in \L$, $D-p \not \geq {\zerov}$ if and only if $D \in \Sigma(\L)$.
   \item [(ii)]Since $\Sigma(\L)$ is a closed set, $\inf\{dist_R(D,p): p \in \Sigma(\L)\}=\min\{dist_R(D,p): p \in \Sigma(\L)\}$.\\
\begin{eqnarray*}
 r(D) &=& \min\{\deg(E): |D-E|= \emptyset, E \geq {\zerov}\}-1 \\
    &=& \min\{\deg(E): r(D-E)=-1, E \geq {\zerov}\}-1 \\
   &=& \min\{\deg(E): D-E \in \Sigma(\L), E \geq {\zerov}\}-1 \\
   &=& \min\{\deg_R(D-p): D-p \geq {\zerov}, p \in \Sigma(\L)\}-1 \\
   &=& 	dist_R(D,\Sigma(\L))-1.
\end{eqnarray*}
Note that the last equality follows from the fact that if $p \in \Sigma(\L)$ and $(D-p)_i < 0$ for some $0 \leq i \leq n$ then $dist_R(D, p-e_i) \leq dist_R(D, p)$ and $p-e_i \in \Sigma(\L)$.  	
\end{itemize}
}
\end{proof}
\subsection{Extreme Points of $\Sigma(\L)$ and $\overline{\Sigma}_{\Bbb R}(\L)$}
Define $H_R^+=\{x \in \Bbb R^{n+1}:  x \cdot R  \geq 0\}$. For any vector $p \in H_R^+$, define $\Delta_R(p)=H_R \cap C^-(p)$ to be the $n$-dimensional simplex in the hyperplane $H_R$. For the definitions of simplex and facet and their properties, we refer the reader to~\cite{Mat02, Sch98}. For simplicity we denote $\Delta_R(R)$ by $\Delta_R$.

It is easy to see that for any $p \in H^+_R$ there exists a unique $\lambda \geq 0$ and $p' \in H_R$ such that $p=p'+\lambda R$. Define the {\it projection} function $\pi:H^+_R \rightarrow H_R$ as follows: for any $p \in H_R^+$, define $\pi(p)=p'$. It is also easy to see that $\pi(p)=p-\lambda R$ where $\lambda={(p \cdot R) / \|R\|^2}$. We refer to $\pi(p)$ the {\it projection} of the point $p$ into the hyperplane $H_R$ along the vector $R$. The following lemma is an immediate consequence of the above definition.
\begin{lemma}
{
\label{delta_simplex_easy_lem}
If $p=(p_0, \dots, p_n) \in H^+_R$ and $p=\pi(p)+\lambda R$, then
\begin{itemize}
\item [(i)] $\Delta_R(p)=\pi(p)+\lambda\Delta_R$.
\item [(ii)] $F_i=\Delta_R(p) \cap \{x \in \Bbb R^n: x_i=p_i\}$ for all $0 \leq i \leq n$, defines all the {\it facets} of the simplex $\Delta_R(p)$.
\end{itemize}
}
\end{lemma}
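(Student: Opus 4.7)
The plan is to prove both assertions by direct linear-algebraic computation, using the decomposition $p = \pi(p) + \lambda R$ where $\pi(p) \in H_R$ and $\lambda = (p \cdot R)/\|R\|^2 \ge 0$.

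For part (i), I would establish the set equality by mutual inclusion. Given $z = \pi(p) + \lambda y$ with $y \in \Delta_R$, I verify $R \cdot z = R \cdot \pi(p) + \lambda R \cdot y = 0$, so $z \in H_R$, and $z \le \pi(p) + \lambda R = p$, so $z \in C^-(p)$; hence $z \in \Delta_R(p)$. Conversely, if $z \in \Delta_R(p)$ and $\lambda > 0$, set $y = (z - \pi(p))/\lambda$; the identities $R \cdot z = 0 = R \cdot \pi(p)$ force $R \cdot y = 0$, while $z \le p$ rearranges into $\lambda y \le \lambda R$, giving $y \le R$ and thus $y \in \Delta_R$. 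The edge case $\lambda = 0$ is disposed of by noting that when $p \in H_R$, the positivity of $R$ together with $R \cdot (p - x) = 0$ and $p - x \ge 0$ forces $x = p$, so $\Delta_R(p) = \{p\} = \{\pi(p)\}$, which matches the right-hand side.

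For part (ii), I would first identify the facets of the reference simplex $\Delta_R$ itself. Since $\Delta_R = H_R \cap C^-(R)$ is cut out in $H_R$ by the $n+1$ inequalities $x_i \le r_i$, the natural facet candidates are $F_i^R = \Delta_R \cap \{x_i = r_i\}$. Each is a genuine facet of $\Delta_R$: the point obtained by setting $x_j = r_j$ for all $j \ne i$ and solving $R \cdot x = 0$ gives $x_i = -\sum_{j \ne i} r_j^2 / r_i < r_i$, producing a vertex of $\Delta_R$ at which only the $i$-th inequality is slack, which shows that each of the other $n$ inequalities is active on a set of affine dimension $n-1$. Applying the affine bijection $y \mapsto \pi(p) + \lambda y$ (which by part (i) maps $\Delta_R$ onto $\Delta_R(p)$, hence sends facets to facets) and substituting, one obtains
\[
\pi(p) + \lambda F_i^R = (\pi(p) + \lambda \Delta_R) \cap \{x : x_i = \pi(p)_i + \lambda r_i\} = \Delta_R(p) \cap \{x : x_i = p_i\},
\]
which is exactly $F_i$.

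The main obstacle is merely the degenerate case $\lambda = 0$, when $p \in H_R$ and the simplex collapses to the point $\{p\}$; in this setting statement (ii) is vacuously satisfied. Apart from that, the lemma is essentially a bookkeeping exercise tracing through the definitions of $\pi$, $H_R$, and $C^-(p)$, combined with the standard fact that an affine bijection between simplices sends facets to facets.
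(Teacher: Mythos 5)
Your proof is correct, and it follows the route the paper intends: the paper offers no written proof at all, simply asserting the lemma as ``an immediate consequence of the above definition,'' and your argument is exactly the definition-chasing verification (mutual inclusion via $p=\pi(p)+\lambda R$ for (i), and transport of the facets $\Delta_R\cap\{x_i=r_i\}$ under the affine bijection $y\mapsto\pi(p)+\lambda y$ for (ii)) that this assertion tacitly relies on. Your handling of the degenerate case $\lambda=0$ is a reasonable reading of a point the paper glosses over.
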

It is easy to see that $\Delta_R$ is the simplex in $H_R$ with vertices $b^0, \dots, b^n \in H_R$ whose coordinates are:
$$b^i_j=\left\{{\begin{matrix} -\sum_{k \neq i}{r^2_k \over r_i} & \hbox{ if } i = j \cr r_i & \hbox { otherwise } \end{matrix}}\right.$$
for all $0 \leq j \leq n$.
\begin{definition}
\label{delta_distance_def}
For any two points $p,q \in H_R$, define the ${\Delta_R}$-distance function between $p$ and $q$ as follows:
$$d_{\Delta_R}(p,q)=\inf\{\lambda \geq 0: q \in p+\lambda \Delta_R\}.$$
\end{definition}
The $\Delta_R$-distance function defined above is a {\it gauge function} (which is often used in the study of convex bodies). For more on gauge functions and their properties, see~\cite{Sie89}. \\For any point $p \in \L$ define $d_{\Delta_R}(p,\L)=\min\{\lambda \geq 0: \hbox{ there exists } \, q \in \L \hbox{ such that }q \in p+\lambda \Delta_R\}$.

The following remark can be considered as a generalization of Lemma 4.7 in~\cite{AM09}, and its proof easily follows from Definition~\ref{delta_distance_def}.
\begin{remark}
{
Given any two vectors $p,q \in H_R$, $$d_{\Delta_R}(p,q)=\max_{0 \leq i \leq n}\{{q_i-p_i \over r_i}\}.$$
}
\end{remark}
\begin{proof}
{
By Definition~\ref{delta_distance_def}, $$d_{\Delta_R}(p,q)=\inf\{\lambda \geq 0: q \in p+\lambda \Delta_R\}=\inf\{\lambda \geq 0 : q \in p+C^-({\lambda R})\}$$$$=\inf\{\lambda \geq 0 : q \leq p+\lambda R\}=\max_{0 \leq i \leq n}\{{q_i-p_i \over r_i}\}.$$
}
\end{proof}
\begin{definition}
\label{extreme_critical_def}
Define
\begin{eqnarray*}
 Ext(\Sigma(\Lambda)) &=&\{\nu \in \Sigma(\Lambda): \deg_R(\nu) \geq \deg_R(p), {\hbox{ for all }} p \in N(\nu) \cap \Sigma(\Lambda)\}, \\
   Ext(\overline{\Sigma}_{\Bbb R}(\Lambda)) &=& \{\nu \in \overline{\Sigma}_{\Bbb R}(\Lambda): \exists \, \delta>0, \hbox{ such that } \deg_R(\nu) \geq \deg_R(p), {\hbox{ for all }} p \in B(\nu,\delta) \cap \overline{\Sigma}_{\Bbb R}(\Lambda)\}, \\
   Crit(\L) &=& \{\nu \in H_R: \exists \, \delta>0 \hbox{ such that } d_{\Delta_R}(\nu,\L) \geq d_{\Delta_R}(p,\L), \hbox{ for all } p \in B(\nu, \delta) \cap H_R\}.
\end{eqnarray*}
where $N(\nu)$ consists of all points $D \in \Bbb Z^{n+1}$ such that $\|D-\nu\|_{\onev} \leq 1$.
We call $Ext(\Sigma(\Lambda))$, $Ext(\overline{\Sigma}_{\Bbb R}(\Lambda))$ and $Crit(\L)$, the set of {\it extreme points} or {\it extreme divisors}  of $\Sigma(\Lambda)$, $\overline{\Sigma}_{\Bbb R}(\Lambda)$ and the set of critical points of $\L$, respectively.
\end{definition}
\begin{lemma}
{
\label{cone_comparison_lem}
If $p,q \in H_R^{+}$, then $p\leq
q$ if and only if $\Delta_R(p) \subseteq \Delta_R(q)$.  In particular, $p<
q$ if and only if $\Delta_R(p) \subsetneq int(\Delta_R(q))$.
}
\end{lemma}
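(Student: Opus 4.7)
The plan is to prove both directions by exploiting the explicit description of $\Delta_R(p)$ as a translated scaled copy of $\Delta_R$ provided by Lemma~\ref{delta_simplex_easy_lem}, and in particular to read off $p$ from the coordinates of the vertices of $\Delta_R(p)$.

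For the forward implication $p \leq q \Rightarrow \Delta_R(p) \subseteq \Delta_R(q)$, I would simply observe that any $x \in \Delta_R(p) = H_R \cap C^-(p)$ satisfies $x \in H_R$ and $x \leq p \leq q$, hence $x \in \Delta_R(q)$. This is immediate from the definitions.

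For the reverse implication, I write $p = \pi(p) + \lambda R$ with $\lambda \geq 0$. If $\lambda = 0$ then $p \in H_R$ so $\Delta_R(p) = \{p\}$, and the containment $\Delta_R(p) \subseteq \Delta_R(q)$ directly gives $p \leq q$. If $\lambda > 0$, then by Lemma~\ref{delta_simplex_easy_lem}(i) the simplex $\Delta_R(p) = \pi(p) + \lambda \Delta_R$ has vertices $v^i := \pi(p) + \lambda b^i$ for $0 \leq i \leq n$. Using the explicit coordinates of $b^i$ and the identity $\pi(p) = p - \lambda R$, a one-line computation gives $v^i_j = p_j$ for $j \neq i$ and $v^i_i = p_i - (\lambda/r_i)\|R\|^2$. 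Since each $v^i \in \Delta_R(p) \subseteq \Delta_R(q)$, we have $v^i \leq q$, and hence $p_j = v^i_j \leq q_j$ for every $j \neq i$. Ranging $i$ over $\{0,\ldots,n\}$ (and assuming $n \geq 1$, the case $n=0$ being trivial), every coordinate $p_j$ gets dominated by $q_j$, so $p \leq q$.

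For the strict statement, I would combine the first part with the characterization of the relative interior of $\Delta_R(q)$ via the facet description in Lemma~\ref{delta_simplex_easy_lem}(ii): $\mathrm{int}(\Delta_R(q)) = \{x \in H_R : x_i < q_i \text{ for all } i\}$ (when $q \cdot R > 0$). If $p < q$, then $\Delta_R(p) \subseteq \Delta_R(q)$ by the first half, any $x \in \Delta_R(p)$ satisfies $x_i \leq p_i < q_i$ so lies in $\mathrm{int}(\Delta_R(q))$, and properness is witnessed by $q \in \Delta_R(q) \setminus \Delta_R(p)$. Conversely, if $\Delta_R(p) \subsetneq \mathrm{int}(\Delta_R(q))$, the vertex argument above applied with the strict inequality $v^i_j < q_j$ (for $j \neq i$) produces $p_j < q_j$ for every $j$. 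I expect the only mild obstacle to be bookkeeping the degenerate cases where $\lambda = 0$ or $q \cdot R = 0$, which are straightforward but need to be mentioned to make the vertex argument unambiguous.
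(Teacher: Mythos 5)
Your proof is correct and is essentially the paper's argument: the paper also deduces the reverse implication from Lemma~\ref{delta_simplex_easy_lem}, phrasing it as $p\leq q \iff C^-(p)\subseteq C^-(q)$ together with the facet description, whereas you read the coordinates of $p$ off the vertices of $\Delta_R(p)$ --- the same reconstruction idea, just made more explicit (note the paper's displayed formula for $b^i_j$ off the diagonal should read $r_j$, not $r_i$, which is the corrected version your computation implicitly uses). One small repair in the strict case: your properness witness fails, since $p<q$ forces $q\cdot R>0$, hence $q\notin H_R$ and so $q\notin\Delta_R(q)$ at all; instead, properness is immediate because the compact set $\Delta_R(p)$ cannot coincide with the nonempty, relatively open set $int(\Delta_R(q))$ in $H_R$ (for $n\geq 1$).
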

\begin{proof}
{
It is easy to see that $p \leq q$ if and only if $C^{-}(p) \subseteq
C^{-}(q)$.  Now the second part of Lemma~\ref{delta_simplex_easy_lem} implies that $C^{-}(p) \subseteq
C^{-}(q)$ if and only if
$(C^{-}(p) \cap H_R) \subseteq (C^{-}(q) \cap H_R)$.
}
\end{proof}
An easy application of Lemma~\ref{sigma_closure} is that if $p \in Ext(\overline{\Sigma}_{\Bbb R}(\Lambda))$, then $p \not \in \Lambda$. The following theorem characterizes the set of extreme points of $\overline{\Sigma}_{\Bbb R}(\Lambda)$.
\begin{theorem}
{
\label{extreme_sigma_closure_thm}
If $p \in \overline{\Sigma}_{\Bbb R}(\Lambda) \setminus \Lambda$ then $p \in Ext(\overline{\Sigma}_{\Bbb R}(\Lambda))$ if and only if
each facet of the simplex $\Delta_R(p)$ contains a point of $\Lambda$ in its interior.
}
\end{theorem}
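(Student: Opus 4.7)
The plan is to prove both directions by probing the behavior of $\deg_R$ along small perturbations of $p$ in the standard coordinate directions. Because $\deg_R$ is linear with strictly positive gradient $R$, any local increase of $\deg_R$ inside $\overline{\Sigma}_{\R}(\L)$ can be witnessed by moving $p$ in some direction $+e_i$, and such a move is blocked precisely when the facet $F_i$ of $\Delta_R(p)$ contains a lattice point of $\L$ in its interior.

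For the ``only if'' direction, I would argue by contrapositive. Suppose some facet $F_i$ contains no lattice point of $\L$ in its interior. I claim $p + \epsilon e_i \in \overline{\Sigma}_{\R}(\L)$ for $\epsilon > 0$ sufficiently small. If not, some $q \in \L$ satisfies $p + \epsilon e_i > q$, which unfolds to $p_j > q_j$ for $j \neq i$ and $p_i + \epsilon > q_i$; combined with $p \not> q$ (which holds since $p \in \overline{\Sigma}_{\R}(\L)$, by Lemma~\ref{sigma_closure}), this forces $p_i \leq q_i < p_i + \epsilon$. Since $q_i \in \Z$, the quantity $q_i - p_i$ is either $0$ or bounded below by a positive constant depending only on $p_i$, so by choosing $\epsilon$ smaller than this constant I force $q_i = p_i$ and hence, combined with $q_j < p_j$ for $j \neq i$, place $q$ in the interior of $F_i$---contradicting the hypothesis. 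Therefore $\deg_R(p + \epsilon e_i) = \deg_R(p) + \epsilon r_i > \deg_R(p)$ with $p + \epsilon e_i \in \overline{\Sigma}_{\R}(\L)$, contradicting $p \in Ext(\overline{\Sigma}_{\R}(\L))$.

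For the ``if'' direction, pick $q^{(i)} \in \L$ in the interior of $F_i$ for each $i$, so $q^{(i)}_i = p_i$ and $p_j > q^{(i)}_j$ for all $j \neq i$. I would choose $\delta > 0$ with $\delta < p_j - q^{(i)}_j$ uniformly over all such pairs $i, j$, and show that any $x \in B(p, \delta) \cap \overline{\Sigma}_{\R}(\L)$ satisfies $x \leq p$ coordinatewise: if instead $x_i > p_i$ for some $i$, then $x_j > p_j - \delta > q^{(i)}_j$ for every $j \neq i$ and $x_i > p_i = q^{(i)}_i$, giving $x > q^{(i)}$---a contradiction to $x \in \overline{\Sigma}_{\R}(\L)$. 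Positivity of $R$ then yields $\deg_R(x) \leq \deg_R(p)$, establishing $p \in Ext(\overline{\Sigma}_{\R}(\L))$.

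The main technical subtlety lies in the $(\Rightarrow)$ direction: one must use integrality of $\L$ to extract a positive lower bound on $q_i - p_i$ uniform over all potential blocking lattice points with $q_i > p_i$, so that a single small $\epsilon$ eliminates every such candidate and only lattice points with $q_i = p_i$, i.e., those sitting in the interior of $F_i$, can remain as obstructions to the $+e_i$ perturbation.
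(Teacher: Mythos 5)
Your proof is correct and follows essentially the same route as the paper's: the forward direction is the same contrapositive perturbation along $+e_i$, with integrality of $\L$ ruling out blocking lattice points having $q_i>p_i$, and the reverse direction uses the facet-interior lattice points to block any coordinate increase. If anything, your reverse direction is slightly tidier: by taking one witness $q^{(i)}$ per facet you produce a single uniform $\delta$ with $B(p,\delta)\cap\overline{\Sigma}_{\Bbb R}(\L)\subseteq C^-(p)$, whereas the paper argues direction-by-direction (its "claim" for each $v$) and leaves the passage to a uniform neighborhood implicit.
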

\begin{proof}
{
Assume that $p=(p_0, \dots, p_n) \in \overline{\Sigma}_{\Bbb R}(\Lambda) \setminus \Lambda$. Let $F_i$, $0 \leq i \leq n$ be the facets of $\Delta_R(p)$. Let $0 \leq i \leq n$ be such that $int(F_i)$ contains no point of $\Lambda$. By Lemma~\ref{delta_simplex_easy_lem} (ii), there exists an $\epsilon >0$ such that $\Delta_R(p+\epsilon e_i)$ does not contain
any points of $\Lambda$ in its interior. Hence Lemma~\ref{cone_comparison_lem} and Lemma~\ref{sigma_closure} imply that $p+\epsilon e_i \in \overline{\Sigma}_{\Bbb R}(\Lambda)$. Since $\deg_R(p) < \deg_R(p+\epsilon e_i)$, the point $p$ is not an extreme point.

Conversely, assume that $p \in \overline{\Sigma}_{\Bbb R}(\Lambda) \setminus \Lambda$ is such that the interior of each facet $F$ of $\Delta_R(p)$ contains a point of $\Lambda$. We claim that for any $v=(v_0, \dots, v_n) \in \Bbb R^{n+1}$, either  $\deg_R(p+\epsilon v) \leq \deg_R(p)$ for all $\epsilon \geq 0$, or there exists $\lambda>0$ such that for all $0< \epsilon \leq \lambda$, $p+\epsilon v \not \in \overline{\Sigma}_{\Bbb R}(\Lambda)$. If $v \leq {\zerov}$, then for all $\epsilon \geq 0$, $\deg_R(p+\epsilon v) \leq \deg_R(p)$. Now, without loss of generality assume that $v_0 >0$ and $v_1  \leq 0$. Suppose $x \in int(F)$ where $F=\Delta_R(D) \cap \{y \in \Bbb R^n: (y-D) \cdot e_0=0\}$. Since $x \in int(F)$, we can pick $\lambda>0$ small enough such that for all $0 < \epsilon \leq \lambda $, $x \in int(\Delta_R(p+\epsilon v))$. Thus Lemma~\ref{cone_comparison_lem} and Lemma~\ref{sigma_closure} imply that $x \not \in \overline{\Sigma}_{\Bbb R}(\Lambda)$ for all $0 < \epsilon \leq \lambda$. This completes the proof of the claim. It is easy to see that the proof of the theorem follows from the claim.
}
\end{proof}
\begin{corollary}
{
\label{extreme_sigma_closure_integer_cor}
$ Ext(\overline{\Sigma}_{\Bbb R}(\Lambda)) \subset \Bbb Z^{n+1}$.
}
\end{corollary}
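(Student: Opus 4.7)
The plan is to deduce Corollary~\ref{extreme_sigma_closure_integer_cor} directly from Theorem~\ref{extreme_sigma_closure_thm}, using the explicit coordinate-hyperplane description of the facets of $\Delta_R(p)$ given in Lemma~\ref{delta_simplex_easy_lem}(ii). The point is that the integrality of the coordinates of an extreme point is already implicit in the characterization just proved: the facets of $\Delta_R(p)$ live in the coordinate hyperplanes $\{x_i = p_i\}$, and each such facet is now known to contain a lattice point.

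First, I would fix $p = (p_0, \dots, p_n) \in Ext(\overline{\Sigma}_{\Bbb R}(\Lambda))$ and split into two cases. If $p \in \Lambda$, then $p \in \Z^{n+1}$ trivially, since $\Lambda \subseteq \L_R \subseteq \Z^{n+1}$. Otherwise $p \in \overline{\Sigma}_{\Bbb R}(\Lambda) \setminus \Lambda$, so Theorem~\ref{extreme_sigma_closure_thm} applies and yields, for each $0 \leq i \leq n$, a lattice point $\ell_i \in \Lambda$ lying in the relative interior of the facet $F_i$ of $\Delta_R(p)$.

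Next, I would invoke Lemma~\ref{delta_simplex_easy_lem}(ii), which identifies $F_i = \Delta_R(p) \cap \{x \in \R^{n+1} : x_i = p_i\}$. Consequently every point of $F_i$, and in particular $\ell_i$, has $i$-th coordinate equal to $p_i$. Since $\ell_i \in \Lambda \subseteq \Z^{n+1}$, this forces $p_i = (\ell_i)_i \in \Z$; running this over all $0 \leq i \leq n$ gives $p \in \Z^{n+1}$.

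The hard part will be essentially nonexistent for this corollary: Theorem~\ref{extreme_sigma_closure_thm} has already done the geometric work of producing lattice points in the interior of every facet of $\Delta_R(p)$, and what remains is simply to read off the constraint that each such facet lies in a coordinate hyperplane $\{x_i = p_i\}$, pinning down $p_i$ as an integer.
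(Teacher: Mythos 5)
Your proposal is correct and follows essentially the same route as the paper: apply Theorem~\ref{extreme_sigma_closure_thm} to place a lattice point in the interior of each facet of $\Delta_R(p)$ and then read off integrality of each coordinate from the facet description in Lemma~\ref{delta_simplex_easy_lem}(ii). The only difference is that you spell out the coordinate-pinning step and the (vacuous, by the remark preceding the theorem) case $p \in \Lambda$, which the paper leaves implicit.
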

\begin{proof}
{
Let $p\in Ext(\overline{\Sigma}_{\Bbb R}(\Lambda))$. Theorem~\ref{extreme_sigma_closure_thm} shows that the interior of every
facet $F$ of $\Delta_R(p)$ contains a point of $\Lambda$. Since $\Lambda \subseteq \Bbb Z^{n+1}$, the second part of Lemma~\ref{delta_simplex_easy_lem} implies that $p \in \Bbb Z^{n+1}$.
}
\end{proof}
\begin{theorem}
{
\label{extreme_sigma_closure_sigma_thm}
A divisor $\nu \in Ext(\Sigma(\Lambda))$ if and only if $\nu+{\onev} \in Ext(\overline{\Sigma}_{\Bbb R}(\Lambda))$.
}
\end{theorem}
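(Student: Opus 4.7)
The plan is to apply Theorem~\ref{extreme_sigma_closure_thm} at the point $\nu + \onev$ and translate its geometric conclusion into the combinatorial condition defining $Ext(\Sigma(\L))$. The bridge is Lemma~\ref{delta_simplex_easy_lem}(ii): the facet $F_i = \Delta_R(\nu + \onev) \cap \{x_i = \nu_i + 1\}$ has as its relative interior the set of $x \in H_R$ with $x_i = \nu_i + 1$ and $x_j < \nu_j + 1$ for $j \neq i$, so by integrality a lattice point $q \in \L$ lies in $int(F_i)$ precisely when $q \leq \nu + e_i$ and $q_i = \nu_i + 1$. The geometric statement that each $F_i$ contains a lattice point in its interior is therefore equivalent to the discrete statement that $\nu + e_i \not\in \Sigma(\L)$ for every $i$, with the minimality $q_i = \nu_i + 1$ being forced by $\nu \in \Sigma(\L)$. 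The base membership equivalence $\nu \in \Sigma(\L) \Leftrightarrow \nu + \onev \in \overline{\Sigma}_{\mathbb{R}}(\L)$ is already supplied by Lemma~\ref{sigma_closure_sigma_lem}.

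For the reverse direction, I would assume $\nu + \onev \in Ext(\overline{\Sigma}_{\mathbb{R}}(\L))$. The remark preceding Theorem~\ref{extreme_sigma_closure_thm} yields $\nu + \onev \not\in \L$, so Theorem~\ref{extreme_sigma_closure_thm} applies and delivers lattice points $q^{(i)} \in int(F_i)$, which immediately show $\nu + e_i \not\in \Sigma(\L)$. Any $p \in N(\nu)$ has the form $\nu$, $\nu - e_i$, or $\nu + e_i$; the last case is excluded for $p \in \Sigma(\L)$, and in the other two cases $\deg_R(p) \leq \deg_R(\nu)$, so $\nu \in Ext(\Sigma(\L))$.

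For the forward direction, I would assume $\nu \in Ext(\Sigma(\L))$. Since $\nu + e_i \in N(\nu)$ and $\deg_R(\nu + e_i) = \deg_R(\nu) + r_i > \deg_R(\nu)$, extremality forces $\nu + e_i \not\in \Sigma(\L)$, producing $q^{(i)} \in \L$ with $q^{(i)} \leq \nu + e_i$; and because $\nu \in \Sigma(\L)$ prohibits $q^{(i)} \leq \nu$, we must have $q^{(i)}_i = \nu_i + 1$, placing $q^{(i)} \in int(F_i)$. The step I expect to require the most care, and the only one that does not follow by direct translation, is verifying the hypothesis $\nu + \onev \not\in \L$ of Theorem~\ref{extreme_sigma_closure_thm}: if $\nu + \onev$ lay in $\L$, then for any fixed $q^{(i)}$ the vector $(\nu + \onev) - q^{(i)}$ would lie in $\L \subseteq H_R$ and hence have $R$-degree zero, yet it is coordinate-wise nonnegative with a strictly positive entry in every $j \neq i$, contradicting $R > 0$. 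Once this is dispatched, Theorem~\ref{extreme_sigma_closure_thm} delivers $\nu + \onev \in Ext(\overline{\Sigma}_{\mathbb{R}}(\L))$.
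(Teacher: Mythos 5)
Your proof is correct, and it takes a more explicit route than the paper's. The paper disposes of this theorem in two lines: it cites Corollary~\ref{extreme_sigma_closure_integer_cor} (extreme points of $\overline{\Sigma}_{\R}(\L)$ are integral) together with Lemma~\ref{sigma_closure_sigma_lem} and declares the equivalence immediate, leaving implicit the matching of the two local-maximality notions (degree-maximality in a small ball inside $\overline{\Sigma}_{\R}(\L)$ versus degree-maximality over $N(\nu)\cap\Sigma(\L)$). You instead apply Theorem~\ref{extreme_sigma_closure_thm} directly at $p=\nu+\onev$ and translate ``each facet $F_i$ of $\Delta_R(\nu+\onev)$ contains a lattice point in its relative interior'' into the discrete condition ``$\nu+e_i\notin\Sigma(\L)$ for all $i$,'' which, given $\nu\in\Sigma(\L)$ (Lemma~\ref{sigma_closure_sigma_lem}) and $\deg_R(\nu\pm e_i)=\deg_R(\nu)\pm r_i$, is exactly extremality of $\nu$ in $\Sigma(\L)$; your identification of $int(F_i)\cap\Z^{n+1}$ with $\{q\le\nu+e_i,\ q_i=\nu_i+1\}$ is the right bridge. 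Two things your write-up handles that the paper's terse proof does not mention are worth keeping: the verification that $\nu+\onev\notin\L$ in the forward direction (needed before Theorem~\ref{extreme_sigma_closure_thm} can be invoked, and your degree argument using any one $q^{(i)}$ is valid since nonzero elements of $\L\subseteq H_R$ cannot be componentwise nonnegative), and the citation of the remark preceding Theorem~\ref{extreme_sigma_closure_thm} for the reverse direction. The trade-off: the paper's proof is shorter by leaning on the integrality corollary, while yours is self-contained and makes the discrete--continuous dictionary explicit; both rest ultimately on the same facet characterization.
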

\begin{proof}
{
Corollary~\ref{extreme_sigma_closure_integer_cor} implies that $Ext(\overline{\Sigma}_{\Bbb R}(\Lambda)) \subseteq \Bbb Z^{n+1}$. The theorem immediately follows from Lemma~\ref{sigma_closure_sigma_lem}.
}
\end{proof}
The set of critical points of $\L$ ($Crit(\L)$ in Definition~\ref{extreme_critical_def}) is the set of local maxima of the function $d_{\Delta_R}(\cdot,\L)$. The following theorem characterizes  critical points of $\L$ in terms of extreme points of $\overline{\Sigma}_{\R}(\L)$.
\begin{theorem}
{
\label{extreme_sigma_closure_critical_thm}
For $p \in H_R$, let $\lambda=d_{\Delta_R}(p,\L)$ and $p'=p+\lambda R$.  Then $p' \in Ext(\overline{\Sigma}_{\R}(\L))$ if and only if $p \in Crit(\L)$.
}
\end{theorem}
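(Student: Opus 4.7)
The plan is to identify $\overline{\Sigma}_{\R}(\L)$ explicitly as the hypograph of the function $f(q) := d_{\Delta_R}(q,\L)$ on $H_R$, using the orthogonal decomposition $\R^{n+1} = H_R \oplus \R\cdot R$. Once this is in place, both sides of the claimed equivalence translate into the statement that $f$ attains a local maximum at $p$ (up to a harmless rescaling by $\|R\|^2$), and the theorem becomes a routine local comparison.

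First I will prove the key identification: for any $x \in \R^{n+1}$, writing $x = \pi(x) + \mu R$ with $\mu = (x \cdot R)/\|R\|^2$, one has $x \in \overline{\Sigma}_{\R}(\L)$ if and only if $\mu \leq f(\pi(x))$. By Lemma~\ref{sigma_closure}, $x \in \overline{\Sigma}_{\R}(\L)$ iff $x \not > q'$ for every $q' \in \L$, which (since $q' \in H_R$) means that for each such $q'$ there is an index $i$ with $\mu \leq (q'_i - \pi(x)_i)/r_i$. By the remark following Definition~\ref{delta_distance_def}, this is exactly $\mu \leq d_{\Delta_R}(\pi(x), q')$, and taking the infimum over $q' \in \L$ gives $\mu \leq f(\pi(x))$. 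In particular, $p' = p + \lambda R$ lies on the ``upper envelope'' $\phi(q) := q + f(q) R$ of $\overline{\Sigma}_{\R}(\L)$, and $\deg_R(\phi(q)) = f(q)\|R\|^2$.

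Next I will record the continuity needed. The max-formula shows $f$ is Lipschitz on $H_R$ with constant $1/\min_i r_i$, via the one-line estimate $d_{\Delta_R}(q'',q') \leq d_{\Delta_R}(q,q') + \max_i (q_i - q''_i)/r_i$ followed by taking infima over $q' \in \L$; hence $\phi$ is continuous. The projection $\pi$ is orthogonal onto $H_R$, so it is $1$-Lipschitz.

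For $(\Leftarrow)$, assume $p \in Crit(\L)$ and pick $\delta_1 > 0$ with $f \leq \lambda$ on $B(p,\delta_1) \cap H_R$. Choose $\delta > 0$ so that $\pi(B(p',\delta)) \subseteq B(p,\delta_1)$; for any $x \in B(p',\delta) \cap \overline{\Sigma}_{\R}(\L)$ the identification yields
\[
\deg_R(x) = \mu\|R\|^2 \leq f(\pi(x))\|R\|^2 \leq \lambda\|R\|^2 = \deg_R(p'),
\]
so $p' \in Ext(\overline{\Sigma}_{\R}(\L))$. For $(\Rightarrow)$, assume $p' \in Ext(\overline{\Sigma}_{\R}(\L))$ with $\deg_R \leq \deg_R(p')$ on $B(p',\delta) \cap \overline{\Sigma}_{\R}(\L)$; use continuity of $\phi$ to pick $\delta_1$ with $\phi(B(p,\delta_1) \cap H_R) \subseteq B(p',\delta)$, and apply the hypothesis to the points $\phi(q)$, which lie in $\overline{\Sigma}_{\R}(\L)$ by the identification. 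This gives $f(q)\|R\|^2 \leq \lambda\|R\|^2$ for all $q \in B(p,\delta_1) \cap H_R$, i.e., $p \in Crit(\L)$. The only real content is the identification of the second paragraph; after that the equivalence is purely a matter of passing between $\delta$-balls in $H_R$ and in $\R^{n+1}$, so the main (mild) obstacle is just bookkeeping the two radii.
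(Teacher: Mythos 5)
Your proof is correct, but it takes a genuinely different route from the paper. The paper derives both implications from the facet characterization of extreme points: for the direction ``$p' \in Ext(\overline{\Sigma}_{\R}(\L)) \Rightarrow p \in Crit(\L)$'' it invokes Theorem~\ref{extreme_sigma_closure_thm} (every facet of $p+\lambda\Delta_R$ contains a lattice point in its interior), and for the converse it argues by contraposition, extracting from the \emph{proof} of Theorem~\ref{extreme_sigma_closure_thm} a coordinate direction $e_i$ along which $p'$ can be pushed while staying in $\overline{\Sigma}_{\R}(\L)$, and then using Lemma~\ref{distance_HR_sigma_closure_lem} to see that the projections of the pushed points are nearby points of $H_R$ with strictly larger $d_{\Delta_R}(\cdot,\L)$. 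You bypass the facet machinery entirely: your key identification of $\overline{\Sigma}_{\R}(\L)$ as the hypograph $\{q+\mu R: q\in H_R,\ \mu\le d_{\Delta_R}(q,\L)\}$ is exactly the fiberwise statement of Lemma~\ref{distance_HR_sigma_closure_lem} upgraded to a global description (and proved directly from Lemma~\ref{sigma_closure} and the max-formula for $d_{\Delta_R}$), after which both sides of the theorem literally say that $d_{\Delta_R}(\cdot,\L)$ has a local maximum at $p$, transported between $H_R$ and $\R^{n+1}$ by the $1$-Lipschitz projection $\pi$ and the continuous section $\phi(q)=q+d_{\Delta_R}(q,\L)R$ (continuity via your Lipschitz estimate). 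What each approach buys: yours is more self-contained and makes explicit the continuity/locality bookkeeping that the paper's forward direction leaves implicit (its appeal to ``as the proof of Theorem~\ref{extreme_sigma_closure_thm} shows'' and the one-line jump from the facet condition to $p\in Crit(\L)$), and it clarifies that only the inequality $d_{\Delta_R}(p_\epsilon,\L)\ge (p'_\epsilon\cdot R)/\|R\|^2$ is ever needed; the paper's route is shorter given that Theorem~\ref{extreme_sigma_closure_thm} is already established and keeps the lattice-points-on-facets picture in the foreground, which is reused elsewhere (e.g., Corollary~\ref{extreme_sigma_closure_integer_cor} and the arithmetical-graph arguments).
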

\begin{proof}
{
If $p' \in Ext(\overline{\Sigma}_{\R}(\L))$ then by Theorem~\ref{extreme_sigma_closure_thm} each facet of the simplex $\Delta_R(p+\lambda R)=p+\lambda \Delta_R$ contains a point of $\L$ in its interior. This shows that $p \in Crit(\L)$.

Conversely, assume that $p \in Crit(L)$and $p' \not \in Ext(\overline{\Sigma}_{\R}(\L))$. As the proof of Theorem~\ref{extreme_sigma_closure_thm} shows, there exist $0 \leq i \leq n$ and  $\delta>0$ such that for all $0<\epsilon \leq \delta$, $p'_{\epsilon}=p'+\epsilon e_i \in \overline{\Sigma}_{\R}(\L)$. For each $0<\epsilon \leq \delta$, let $p_{\epsilon}=\pi(p'_{\epsilon})$ to be the projection of $p'_{\epsilon}$ along $R$ into $H_R$. Lemma~\ref{distance_HR_sigma_closure_lem} implies that $d_{\Delta_R}(p_{\epsilon},\L)=\left({p'_{\epsilon} \cdot R \over \|R\|^2}\right)$. Since $p_{\epsilon}' \cdot R > p' \cdot R$, we conclude that $d_{\Delta_R}(p_{\epsilon},\L) > d_{\Delta_R}(p,\L)$, a contradiction.
}
\end{proof}
\begin{corollary}
{
\label{extreme_L_ciritical_cor}
Let $\varphi: Ext(\Sigma(\L)) \rightarrow Crit(\L)$ be as follows: For any $\nu \in Ext(\Sigma(\L))$, $\varphi(\nu)=\pi(\nu+{\onev})$. Then $\varphi$ is a bijection.
}
\end{corollary}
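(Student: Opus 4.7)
The plan is to factor $\varphi$ as the composition of two bijections already available from the theorems above. First, Theorem~\ref{extreme_sigma_closure_sigma_thm} gives a bijection $\psi_1 \colon Ext(\Sigma(\L)) \to Ext(\overline{\Sigma}_{\R}(\L))$ defined by $\nu \mapsto \nu + \onev$, whose inverse $q \mapsto q - \onev$ lands in $\Z^{n+1}$ thanks to Corollary~\ref{extreme_sigma_closure_integer_cor}. It then suffices to show that $\psi_2 \colon Ext(\overline{\Sigma}_{\R}(\L)) \to Crit(\L)$ defined by $q \mapsto \pi(q)$ is a bijection, since then $\varphi = \psi_2 \circ \psi_1$.

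The main step, and the one I expect to be the crux of the argument, is the following intermediate observation: for every $q \in Ext(\overline{\Sigma}_{\R}(\L))$, writing $q = \pi(q) + \mu R$ with $\mu = (q \cdot R)/\|R\|^2$, one has the equality $\mu = d_{\Delta_R}(\pi(q),\L)$. I would prove this via two inequalities. First, Theorem~\ref{extreme_sigma_closure_thm} guarantees that each facet of $\Delta_R(q) = \pi(q) + \mu \Delta_R$ contains a point of $\L$ in its relative interior; in particular $\L \cap (\pi(q) + \mu \Delta_R)$ is nonempty, giving $d_{\Delta_R}(\pi(q),\L) \leq \mu$. Second, if $\lambda := d_{\Delta_R}(\pi(q),\L) < \mu$, pick $z \in \L$ with $z \in \pi(q) + \lambda \Delta_R$; since $R > 0$ and $\mu > \lambda$, $z \leq \pi(q) + \lambda R < \pi(q) + \mu R = q$, so $z < q$, contradicting Lemma~\ref{sigma_closure} applied to $q \in \overline{\Sigma}_{\R}(\L)$.

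With this height-equality in hand, well-definedness of $\psi_2$ is immediate from the forward implication of Theorem~\ref{extreme_sigma_closure_critical_thm} applied to $p := \pi(q)$ and $p' := q$. Injectivity follows from the reconstruction formula $q = \pi(q) + d_{\Delta_R}(\pi(q),\L)\, R$. For surjectivity, given $p \in Crit(\L)$, set $\lambda := d_{\Delta_R}(p,\L)$ and $q := p + \lambda R$; the reverse implication of Theorem~\ref{extreme_sigma_closure_critical_thm} yields $q \in Ext(\overline{\Sigma}_{\R}(\L))$, and $\pi(q) = p$ since $p \in H_R$. Everything else is formal bookkeeping, so the only real substance lies in pinning down the exact height above $H_R$ at which an extreme point of $\overline{\Sigma}_{\R}(\L)$ must sit.
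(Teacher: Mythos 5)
Your proof is correct and follows essentially the same route as the paper, which simply cites Theorems~\ref{extreme_sigma_closure_sigma_thm} and~\ref{extreme_sigma_closure_critical_thm}: you factor $\varphi$ through $Ext(\overline{\Sigma}_{\Bbb R}(\L))$ exactly as those theorems suggest. The height equality $\mu = d_{\Delta_R}(\pi(q),\L)$, which you prove via Theorem~\ref{extreme_sigma_closure_thm} and Lemma~\ref{sigma_closure}, is precisely the glue the paper leaves implicit (it is also recoverable from Lemma~\ref{distance_HR_sigma_closure_lem}), so your write-up just makes the omitted step explicit rather than taking a different approach.
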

\begin{proof}
{
This follows from Theorems~\ref{extreme_sigma_closure_critical_thm} and ~\ref{extreme_sigma_closure_sigma_thm}.
}
\end{proof}
\begin{lemma}
{
\label{distance_HR_sigma_closure_lem}
Let $p \in H_R$, $\lambda=d_{\Delta_R}(p,\L)$ and $\lambda'=\max\{t \geq 0: p+tR \in \overline{\Sigma}_{\Bbb R}(\L)\}$. Then $\lambda=\lambda'$.
}
\end{lemma}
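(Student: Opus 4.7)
The plan is to translate both quantities into equivalent conditions about the componentwise partial order on $\R^{n+1}$, using that $p$ and every point of $\L$ lie in $H_R$, and then prove the two inequalities $\lambda' \geq \lambda$ and $\lambda' \leq \lambda$ separately.

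First I would unwind the definitions. Since $\Delta_R = C^{-}(R) \cap H_R$, for $t>0$ we have $t \Delta_R = \{y \in H_R : y \leq tR\}$, so $q \in p + t \Delta_R$ is equivalent to $q - p \leq tR$ together with $q - p \in H_R$; but $q - p \in H_R$ is automatic because $p \in H_R$ and $q \in \L \subseteq H_R$. Hence the condition $d_{\Delta_R}(p,\L) \leq t$ is equivalent to the existence of some $q \in \L$ with $q \leq p + tR$ componentwise. This is the key dictionary between the gauge $d_{\Delta_R}(\cdot,\L)$ and the coordinatewise order that underlies the definition of $\overline{\Sigma}_{\R}(\L)$ (via Lemma~\ref{sigma_closure}).

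For the inequality $\lambda' \geq \lambda$, I take any $t < \lambda$. By the dictionary above, no $q \in \L$ satisfies $q \leq p + tR$, so for every $q \in \L$ there is some coordinate $i$ with $q_i > (p + tR)_i$; in particular $p + tR \not> q$ for every $q \in \L$, so Lemma~\ref{sigma_closure} gives $p + tR \in \overline{\Sigma}_{\R}(\L)$. Because $\overline{\Sigma}_{\R}(\L)$ is closed and $t \mapsto p + tR$ is continuous, the set $\{t \geq 0 : p + tR \in \overline{\Sigma}_{\R}(\L)\}$ is closed, hence contains its supremum, and letting $t \nearrow \lambda$ gives $p + \lambda R \in \overline{\Sigma}_{\R}(\L)$. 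Therefore $\lambda' \geq \lambda$.

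For the reverse inequality $\lambda' \leq \lambda$, I take any $t > \lambda$ and pick $q \in \L$ attaining the minimum in the definition of $\lambda$, so $q \leq p + \lambda R$. Since every entry of $R$ is strictly positive and $t - \lambda > 0$, we obtain $p + tR = (p + \lambda R) + (t-\lambda)R > p + \lambda R \geq q$, which by Lemma~\ref{sigma_closure} places $p + tR$ outside $\overline{\Sigma}_{\R}(\L)$. Hence no $t > \lambda$ lies in the set defining $\lambda'$, so $\lambda' \leq \lambda$, and combining the two inequalities gives $\lambda = \lambda'$. There is no real obstacle here; the only care needed is to correctly match the strict versus weak inequalities when invoking Lemma~\ref{sigma_closure} and to use positivity of $R$ in the second step to convert a weak domination at $t = \lambda$ into a strict one for $t > \lambda$.
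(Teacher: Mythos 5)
Your proposal is correct and follows essentially the same route as the paper: both directions rest on the characterization of $\overline{\Sigma}_{\R}(\L)$ from Lemma~\ref{sigma_closure} together with the dictionary $q \in p + t\Delta_R \Leftrightarrow q \leq p + tR$ (the content of Lemmas~\ref{delta_simplex_easy_lem} and~\ref{cone_comparison_lem}), with positivity of $R$ converting weak into strict domination for $t > \lambda$. The only cosmetic difference is that you reach $t=\lambda$ by a closedness/limit argument, whereas the paper notes directly that no lattice point lies in the interior of $p+\lambda\Delta_R$; both are fine.
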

\begin{proof}
{
First note that since $p \in \overline{\Sigma}_{\Bbb R}(\L)$ and $\overline{\Sigma}_{\Bbb R}(\L)$ is a closed set, $\max\{t \geq 0: p+tR \in \overline{\Sigma}_{\Bbb R}(\L)\}$ is well-defined. The first part of Lemma~\ref{delta_simplex_easy_lem} implies that $p+t\Delta_R=\Delta_R(p+tR)$. Now, for all $0 \leq t \leq \lambda$, by applying Lemma~\ref{sigma_closure} and Lemma~\ref{cone_comparison_lem}, we conclude that $p+tR \in \overline{\Sigma}_{\Bbb R}(\L)$. So $\lambda' \geq \lambda$. Conversely, suppose $t \geq 0$ is such that $\L \cap (p+t\Delta_R) \neq \emptyset$. Lemma~\ref{sigma_closure} and Lemma~\ref{cone_comparison_lem} imply that $p+tR \in \overline{\Sigma}_{\Bbb R}(\L)$ if and only if $\L \cap int(p+t\Delta_R) = \emptyset$. This shows that $\lambda' \leq \lambda$, completing the proof of the lemma.
}
\end{proof}

\begin{lemma}
{
\label{dominate_by_extreme_lem}
There exists a constant $C$ depending only on the lattice $\L$ and the vector $R$ such that for any point $p \in \Sigma(\Lambda)$, we have:
\begin{enumerate}
\item[(i)] $\deg_R(p) \leq C$,
\item[(ii)] there exists some $\nu \in Ext(\Lambda)$
such that $p \leq \nu$.
\end{enumerate}
}
\end{lemma}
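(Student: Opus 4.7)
The plan is to prove (i) first by a covering-radius estimate in the $\Delta_R$-gauge on $H_R$, and then to deduce (ii) essentially for free by observing that once the degree is universally bounded, the set of candidates above $p$ inside $\Sigma(\L)$ is finite, so a maximum-degree element exists and must be extreme.

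For part (i), I would define $\rho := \sup_{x \in H_R} d_{\Delta_R}(x, \L)$ and first check that $\rho < \infty$. The map $x \mapsto d_{\Delta_R}(x, \L)$ is continuous on $H_R$ (since $\L$ is discrete, it is locally the minimum of finitely many gauge distances) and $\L$-invariant. Because $\L$ has full rank in the $n$-dimensional space $H_R$, the quotient $H_R/\L$ is a compact torus, so the function attains its maximum there, yielding $\rho<\infty$. Set $C := \rho\,\|R\|^2$. Given $p \in \Z^{n+1}$ with $\deg_R(p) \geq C$, write $\lambda = (p\cdot R)/\|R\|^2 \geq \rho$ and $\bar p = \pi(p) = p - \lambda R \in H_R$. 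By definition of $\rho$ there is $q \in \L$ with $d_{\Delta_R}(\bar p, q) \leq \rho$, which by the remark following Definition~\ref{delta_distance_def} unwinds to $q \leq \bar p + \rho R$. Since $R > 0$ and $\lambda \geq \rho$, this gives $q \leq \bar p + \lambda R = p$, so $p \notin \Sigma(\L)$. Contrapositively, every $p \in \Sigma(\L)$ has $\deg_R(p) < C$, which is (i).

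For part (ii), fix $p \in \Sigma(\L)$ and consider $T := \{v \in \Z^{n+1} : v \geq p,\ \deg_R(v) \leq C\}$. Each coordinate $v_j$ is squeezed between $p_j$ and $(C - \sum_{i\neq j} r_i p_i)/r_j$, so $T$ is finite; by (i), $\{v \in \Sigma(\L) : v \geq p\} \subseteq T$. This subset is nonempty (it contains $p$), so I pick $\nu$ in it of maximum degree. To check that $\nu \in Ext(\Sigma(\L))$, let $\nu' \in N(\nu) \cap \Sigma(\L)$; integrality together with $\|\nu' - \nu\|_{\onev} \leq 1$ forces $\nu' \in \{\nu\} \cup \{\nu \pm e_i : 0 \leq i \leq n\}$. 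The case $\nu' = \nu + e_i$ would give $\nu' \geq p$ with $\deg_R(\nu') = \deg_R(\nu) + r_i > \deg_R(\nu)$, contradicting maximality of $\nu$; while the cases $\nu' = \nu$ and $\nu' = \nu - e_i$ both satisfy $\deg_R(\nu) \geq \deg_R(\nu')$. Hence $\nu$ is extreme and dominates $p$.

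The only genuinely nontrivial ingredient is the finiteness of the covering radius $\rho$, which is a standard compactness argument on the torus $H_R/\L$. Everything else is coordinate-wise bookkeeping and a finite counting argument, so I do not anticipate other obstacles.
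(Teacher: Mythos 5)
Your proof is correct and follows essentially the same route as the paper: bound the covering radius of $\L$ in the $\Delta_R$-gauge on $H_R$ (you via compactness of $H_R/\L$, the paper via a fundamental parallelotope and a Lipschitz comparison with the Euclidean norm), convert it to a degree bound through the projection along $R$, and then use finiteness of $C^+(p)\cap\Sigma(\L)$ to select a maximal element that must be extreme. Your explicit maximum-degree choice and the verification against $N(\nu)$ is just a cleaner write-up of the paper's ``walk greedily in positive directions'' step, so no further comment is needed.
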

\begin{proof}
{
$(i)$: First, we claim that there exists $c$ such that for all $p\in H_R,
d_{\Delta_R}(p, \Lambda)\leq c.$
We start by noting that there exists a constant $K$ depending only on $R$ such that
$d_{\Delta_R}(p,q) \leq K\cdot \|p-q\|$. This follows immediately
by letting the constant $K$ be the largest radius of a sphere in $H_R$ with center at the origin
contained in $\Delta_R$.

Let $\{l_0, ..., l_{n-1}\}$ be a set of
generators of $\Lambda$, and let $P$ be the parallelotope generated
by $l_0,... l_{n-1}$. Because the $\Delta_R$-distance function is invariant
under translation by lattice points, it is sufficient to prove the claim for all $p \in P$. By letting $c$ be $K$ times the maximum $\ell^2$-distance from a point
in $P$ to the vertices of $P$ (diameter of $P$ by $\ell^2$-norm), the claim is proved.

To prove the first part, it is enough to show that for all $p \in H^+_R \cap \Sigma(\L)$, $\deg_R(p) \leq C$. Let $p'=\pi(p)$, $\lambda \geq 0$ be such that $p=p' +\lambda R$.  Lemma~\ref{cone_comparison_lem} implies that $p \in \Sigma (\Lambda)$ if and only if $\Delta_R (p)$ contains no points of $\Lambda$. Lemma~\ref{distance_HR_sigma_closure_lem} and Theorem~\ref{extreme_sigma_closure_sigma_thm} imply that $\lambda \leq dist_{\Delta_R}(p,\L)$, so $\lambda \leq c$. Therefore, $\deg_R(p) = \lambda \|R\|^2 \leq c \|R\|^2$. This shows that $C \leq c \|R\|^2$, which completes the proof of the first part.

$(ii)$: Let $p\in \Sigma(\Lambda)$. The first part shows that the degrees of points in $Ext(\Lambda)$
are bounded above by $C$. Therefore $C^+(p) \cap \Sigma(\L)$ is a finite set. This immediately shows that there exists $\nu \in Ext(\L)$ such that $p \leq \nu$. To be more precise, one can find an extreme point $\nu \in Ext(\L)$ greedily by starting at point $p$ and walking in positive directions as much as possible.
}
\end{proof}
\begin{lemma}
{
\label{rank_degree_plus_lem}
For any divisor $D\in \Z^{n+1}$, $r(D)=\min \{\deg_R^+(D-\nu) : \nu \in Ext(\Lambda)\}-1$.
}
\end{lemma}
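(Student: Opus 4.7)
The plan is to start from the distance characterization $r(D)+1 = \min\{\|D-p\|_R : p \in \Sigma(\L)\}$ given by Lemma~\ref{distance_rank_lem}(ii) and reach the desired formula in two independent reductions: first replace $\|\cdot\|_R$ by $\deg_R^+$ while keeping the minimization over $\Sigma(\L)$, and then replace $\Sigma(\L)$ by $Ext(\L)$. Both steps reduce to simple coordinatewise manipulations. The key structural fact I would use repeatedly is that $\Sigma(\L)$ is downward-closed: if $p \in \Sigma(\L)$ and $p' \leq p$, then $p' \in \Sigma(\L)$; otherwise some $q \in \L$ would satisfy $q \leq p' \leq p$, contradicting $p \in \Sigma(\L)$.

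For the first reduction, given $p \in \Sigma(\L)$ I would form the coordinatewise truncation $p'$ defined by $p'_i := \min(p_i, D(v_i))$. Downward closure gives $p' \in \Sigma(\L)$; moreover $D - p' \geq {\zerov}$ and $(D-p')(v_i) = (D(v_i)-p_i)^+$ for each $i$, so $\|D-p'\|_R = \deg_R(D-p') = \deg_R^+(D-p)$. Combined with the trivial pointwise bound $\deg_R^+(D-p) \leq \|D-p\|_R$, this yields
\begin{equation*}
r(D)+1 = \min\{\|D-p\|_R : p \in \Sigma(\L)\} = \min\{\deg_R^+(D-p) : p \in \Sigma(\L)\}.
\end{equation*}

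For the second reduction, the inclusion $Ext(\L) \subseteq \Sigma(\L)$ immediately gives $\min\{\deg_R^+(D-\nu) : \nu \in Ext(\L)\} \geq r(D)+1$. Conversely, for any $p \in \Sigma(\L)$, Lemma~\ref{dominate_by_extreme_lem}(ii) produces $\nu \in Ext(\L)$ with $\nu \geq p$; then coordinatewise $D-\nu \leq D-p$, hence $(D-\nu)^+ \leq (D-p)^+$, giving $\deg_R^+(D-\nu) \leq \deg_R^+(D-p)$. Taking minima gives the reverse inequality and finishes the proof. The argument is essentially bookkeeping; the only subtle point is the downward-closure verification noted above, which is itself immediate.
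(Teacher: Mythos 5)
Your proof is correct, and it is built from the same two ingredients as the paper's own argument---Lemma~\ref{distance_rank_lem}(ii) and Lemma~\ref{dominate_by_extreme_lem}(ii), together with the downward-closure of $\Sigma(\L)$---but you decompose the work differently. The paper proves $\min\{\deg_R^+(D-\nu)\}\leq r(D)+1$ by taking an effective $E$ of degree $r(D)+1$ with $D-E\in\Sigma(\L)$, dominating $D-E$ by an extreme point $\nu$, and then arguing by minimality of $\deg_R(E)$ that $E$ and $E'=\nu-(D-E)$ have disjoint supports, whence $\deg_R^+(D-\nu)=\deg_R(E)$; the reverse inequality comes from the single observation $D-(D-\nu)^+\leq\nu$. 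You instead isolate the intermediate identity $r(D)+1=\min\{\deg_R^+(D-p):p\in\Sigma(\L)\}$ via the truncation $p'$ with $p'_i=\min(p_i,D(v_i))$---note that $p'=D-(D-p)^+$, so this is precisely the paper's downward-closure observation, applied to an arbitrary $p\in\Sigma(\L)$ rather than only to an extreme point---and then pass from $\Sigma(\L)$ to the extreme points using domination together with the monotonicity $(D-\nu)^+\leq(D-p)^+$ whenever $\nu\geq p$. The payoff of your arrangement is that the paper's disjoint-support contradiction ($E'\cdot E=0$) disappears entirely, replaced by coordinatewise monotonicity; nothing is lost in generality. One cosmetic point: like the paper, you write $Ext(\L)$ for what Definition~\ref{extreme_critical_def} calls $Ext(\Sigma(\L))$, and the inclusion $Ext(\Sigma(\L))\subseteq\Sigma(\L)$ that your first inequality needs is immediate from that definition.
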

\begin{proof}
{
First we show that $\min \{\deg_R^+(D-\nu) : \nu \in Ext(\Lambda)\}\leq r(D)+1$.
Let $E \geq {\zerov}$ with $\deg_R(E)=r(D)+1$ be such that $D-E \in \Sigma (\Lambda)$, where the existence of $E$ guaranteed by Lemma~\ref{distance_rank_lem}.
By Lemma~\ref{dominate_by_extreme_lem}, there exists $\nu \in \Sigma(\L)$ such that $\nu \geq D-E$. Let $E' = \nu - (D-E)$. We claim that
$E' \cdot E = 0$.  Suppose not and assume there exists $0 \leq i \leq n$ such that $E_i, E'_i
\geq 1$. Note that $D- (E- e_i) \in \Sigma (\Lambda)$ as $\nu \geq D-
(E- e_i)$, but $\deg_R(E- e_i)<\deg_R(E)=r(D)+1$, a contradiction. This gives
that $\deg_R^+(D-\nu)= \deg_R^+(E-E')=\deg(E)=r(D)+1$.

For proving the reverse inequality, let $\nu \in Ext(\Lambda)$ be such that
$\deg^+(D-\nu)$ is minimum.  Because $\nu \geq \nu + (D-\nu)^-=
D-(D-\nu)^+$, it follows that  $D-(D-\nu)^+ \in \Sigma(\Lambda)$. Hence Lemma~\ref{distance_rank_lem} implies that
$r(D)\leq \min\{\deg_R^+(D-\nu) : \nu \in Ext(\Lambda)\}-1$, which completes
the proof.
}
\end{proof}
\subsection{Riemann-Roch Theorem for Uniform and Reflection Invariant Sub-lattices of $\L_R$}
\begin{definition}
{
Let $\L$ be a sub-lattice of $\L_R$ of rank $n$, and $Ext(\Sigma(\L))$ be the set of extreme points of $\Sigma(\L)$. Define
\begin{eqnarray*}
 g_{\min} &=&\min\{\deg_R(\nu) : \nu \in Ext(\Sigma(\Lambda))\}+1, \\
 g_{\max} &=&\max\{\deg_R(\nu) : \nu \in Ext(\Sigma(\Lambda))\}+1.
\end{eqnarray*}
We say the lattice $\L$ is uniform if $g_{\min}=g_{\max}$.
}
\end{definition}
\begin{definition}
{
Let $\L$ be a sub-lattice of $\L_R$ of rank $n$. We say $\L$ is reflection invariant if $-Crit(\L)$ is a translate of $Crit(\L)$, i.e., if there exists $v \in \R^{n+1}$ such that $-Crit(\L)=Crit(\L)+v$.
}
\end{definition}
\begin{definition}
\label{canonical_def}
{
Let $\L$ be a sub-lattice of dimension $n$ of $\L_R$. We say a divisor $K \in \Z^{n+1}$ is a canonical divisor of $\L$, or equivalently $\L$ has a canonical divisor $K$, if for all divisors $D \in \Z^{n+1}$,
$$\deg_R(D)-3g_{\max}+2g_{\min}+1 \leq r(D)-r(K-D) \leq \deg_R(D)- g_{\min}+1.$$
}
\end{definition}
\begin{lemma}
\label{bijection_min_lem}
Suppose $\phi:\mathcal{A} \rightarrow \mathcal{A}'$ is a bijection between sets, and $f:\mathcal{A} \rightarrow \Z$ and $f':\mathcal{A}' \rightarrow \Z$ are functions whose values are bounded from below. If there exist constants $c_1,c_2 \in \Z$ such that for all $a \in \mathcal{A}$, $$c_1 \leq f(a)-f'(\phi(a)) \leq c_2,$$ then
$$ c_1 \leq \min_{a \in \mathcal{A}} f(a) - \min_{a' \in \mathcal{A'}} f'(a') \leq c_2.$$
\end{lemma}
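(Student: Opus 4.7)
The plan is to exploit the bijection $\phi$ to compare the two minima directly, using the pointwise bound between $f$ and $f' \circ \phi$. Since both $f$ and $f'$ take integer values and are bounded from below, their infima are attained; let $a^* \in \mathcal{A}$ satisfy $f(a^*) = \min_{a \in \mathcal{A}} f(a)$ and let $b^* \in \mathcal{A}'$ satisfy $f'(b^*) = \min_{a' \in \mathcal{A}'} f'(a')$.

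For the upper bound, I would apply the hypothesis at $a = \phi^{-1}(b^*) \in \mathcal{A}$, which gives
\[
f(\phi^{-1}(b^*)) - f'(b^*) \leq c_2.
\]
Since $a^*$ minimizes $f$, we have $\min_{a} f(a) = f(a^*) \leq f(\phi^{-1}(b^*)) \leq f'(b^*) + c_2 = \min_{a'} f'(a') + c_2$, which is the desired upper bound.

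For the lower bound, I would apply the hypothesis at $a = a^*$, which gives
\[
f(a^*) - f'(\phi(a^*)) \geq c_1.
\]
Since $b^*$ minimizes $f'$, we have $\min_{a'} f'(a') = f'(b^*) \leq f'(\phi(a^*)) \leq f(a^*) - c_1 = \min_a f(a) - c_1$, which rearranges to the desired lower bound. The only subtle point is ensuring that the minima are attained rather than merely being infima, but this is immediate from the fact that both functions take values in $\mathbb{Z}$ and are bounded below. No step here should present a genuine obstacle; this is a bookkeeping lemma whose role is to let us pass from the pointwise comparison between $\deg_R$-type quantities on $Ext(\Sigma(\L))$ and $Crit(\L)$ (via the bijection of Corollary~\ref{extreme_L_ciritical_cor}) to a comparison of the two associated minima, which will be used in the proof of the Riemann--Roch equivalence theorem.
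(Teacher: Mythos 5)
Your proposal is correct and follows essentially the same argument as the paper: both use integrality plus boundedness below to get attained minimizers, then apply the hypothesis at $\phi^{-1}(y)$ for the upper bound and at the minimizer of $f$ for the lower bound. No issues.
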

\begin{proof}
Since $f$ and $f'$ are integer valued functions whose values are bounded from below, there exists $x \in \mathcal{A}$ and $y \in \mathcal{A}'$ such that $f(x)=\min_{a \in \mathcal{A}} f(a)$ and $f'(y)=\min_{a' \in \mathcal{A'}} f'(a')$. The choice of $x$ and $y$ implies that $f(x)-f'(y) \leq f(\phi^{-1}(y))-f'(y) \leq c_2$, and $f(x)-f'(y) \geq f(x)-f'(\phi(x)) \geq c_1$. Hence $c_1 \leq f(x)-f'(y) \leq c_2$, as desired.
\end{proof}
\begin{theorem}
\label{reflection_invariant_inequality}
Let $\L$ be a reflection invariant sub-lattice of $\L_R$ of rank $n$. Then $\L$ has a canonical divisor, i.e. there exists a divisor $K$ such that for all $D \in \Bbb Z^{n+1}$,
$$ \deg_R(D)-3g_{\max}+2g_{\min}+1 \leq r(D)-r(K-D) \leq \deg_R(D)- g_{\min}+1.$$
\end{theorem}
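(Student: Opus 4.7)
The plan is to use reflection invariance to construct an involution $\iota$ on $Ext(\Sigma(\Lambda))$, select a canonical divisor $K$ built from a maximum-degree extreme point, and then compare the formulas $r(D)+1=\min_\nu\deg_R^+(D-\nu)$ and $r(K-D)+1=\min_\mu\deg_R^+(K-D-\mu)$ of Lemma~\ref{rank_degree_plus_lem} by pairing indices through $\iota$ and applying Lemma~\ref{bijection_min_lem}.

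The involution is built as follows. By Corollary~\ref{extreme_L_ciritical_cor}, $\varphi:\nu\mapsto\pi(\nu+\onev)$ is a bijection $Ext(\Sigma(\Lambda))\to Crit(\Lambda)$; reflection invariance supplies $v\in\R^{n+1}$ making $p\mapsto-p-v$ an involution of $Crit(\Lambda)$, which pulls back through $\varphi^{-1}$ to an involution $\iota$ on $Ext(\Sigma(\Lambda))$ satisfying $\varphi(\nu)+\varphi(\iota(\nu))=-v$. Since $\varphi$ lands in $H_R$, necessarily $\deg_R(v)=0$; expanding $\varphi(\nu)=\nu+\onev-\lambda_\nu R$ with $\lambda_\nu=\deg_R(\nu+\onev)/\|R\|^2$ yields the key identity
\[\nu+\iota(\nu)=-v-2\onev+(\lambda_\nu+\lambda_{\iota(\nu)})R,\]
showing $\nu+\iota(\nu)$ is independent of $\nu$ modulo $\R\cdot R$. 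I then pick $\nu_0\in Ext(\Sigma(\Lambda))$ with $\deg_R(\nu_0)=g_{\max}-1$ and set $K:=\nu_0+\iota(\nu_0)\in\Z^{n+1}$, so that $\deg_R(K)\in[g_{\min}+g_{\max}-2,\,2g_{\max}-2]$. For each $\nu$, define $c_\nu$ by $\nu+\iota(\nu)=K+c_\nu R$, so $c_\nu\|R\|^2=\deg_R(\nu)+\deg_R(\iota(\nu))-\deg_R(K)$.

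Using $K-\iota(\nu)=\nu-c_\nu R$ and the elementary identity $\deg_R^+(x)=\deg_R(x)+\deg_R^+(-x)$, one computes
\[\deg_R^+(D-\nu)-\deg_R^+(K-D-\iota(\nu))=\deg_R(D)-\deg_R(K)+\deg_R(\iota(\nu))+\bigl[\deg_R^+(D-\nu)-\deg_R^+((D-\nu)+c_\nu R)\bigr].\]
The coordinatewise estimate $|(y_i+cr_i)^+-y_i^+|\le|c|r_i$ forces the bracket to have sign opposite to $c_\nu$ and magnitude at most $|c_\nu|\|R\|^2$, so a sign-split on $c_\nu$ places the right-hand side above in the interval with endpoints $\deg_R(\iota(\nu))$ and $\deg_R(K)-\deg_R(\nu)$. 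Because $\deg_R(K)\ge g_{\min}+g_{\max}-2$, these endpoints collapse uniformly over $\nu$ to the lower constant $g_{\min}-1$ and the upper constant $\deg_R(K)-g_{\min}+1$. Lemma~\ref{bijection_min_lem} applied to the bijection $\iota:Ext(\Sigma(\Lambda))\to Ext(\Sigma(\Lambda))$ with $f(\nu)=\deg_R^+(D-\nu)$ and $f'(\mu)=\deg_R^+(K-D-\mu)$ then gives
\[\deg_R(D)-\deg_R(K)+g_{\min}-1\le r(D)-r(K-D)\le\deg_R(D)-g_{\min}+1,\]
matching the theorem's upper bound exactly; the theorem's lower bound $\deg_R(D)-3g_{\max}+2g_{\min}+1$ follows from $\deg_R(K)\le 2g_{\max}-2$ and $g_{\max}\ge g_{\min}$. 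The main technical challenge is the degree bookkeeping in the preceding display: choosing $\nu_0$ of maximum degree is precisely what forces $\deg_R(K)$ into the window $[g_{\min}+g_{\max}-2,2g_{\max}-2]$ that makes the $\nu$-dependent endpoints collapse to the constants required to feed into Lemma~\ref{bijection_min_lem}.
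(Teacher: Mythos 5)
Your proof is correct and takes essentially the same route as the paper's: pull the reflection of $Crit(\L)$ back through $\varphi$ to a pairing of extreme points, set $K=\nu_0+\iota(\nu_0)$, bound $\deg_R^+(D-\nu)-\deg_R^+(K-D-\iota(\nu))$ uniformly over $\nu$, and finish with Lemma~\ref{bijection_min_lem} and Lemma~\ref{rank_degree_plus_lem}. The only (harmless) variation is in the choice of $K$: the paper picks $\nu_0$ maximizing $\deg_R(\nu_0+\phi(\nu_0))$ so that $K-(\nu+\phi(\nu))$ is a nonnegative multiple of $R$ and monotonicity suffices, whereas you pick a maximum-degree $\nu_0$ and control a signed correction $c_\nu R$ via the monotone, $1$-Lipschitz map $t\mapsto t^+$, which even yields a slightly sharper lower bound than the one stated.
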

\begin{proof}
First we construct the canonical divisor $K$ and then we show it has the desired property. Since $\L$ is reflection invariant, there exists a vector $v \in \R^{n+1}$ such that $-Crit(\L)=Crit(\L)+v$. Therefore there exists a bijection function $\eta$ from $Crit(\L)$ to itself such that $\eta(c)+c=v$. Let $\varphi:Ext(\Sigma(\L)) \rightarrow Crit(\L)$ be the bijection described in Corollary~\ref{extreme_L_ciritical_cor}. Define the bijection $\phi$ from $Ext(\Sigma(\L))$ to itself so that for all $\nu \in Ext(\Sigma(\L))$, $\phi(\nu)=\varphi^{-1}\eta\varphi(\nu)$. Since for all $\nu \in Ext(\Sigma(\L))$, $\deg_R(\nu+\phi(\nu)) \leq 2g_{\max}$, there exists $\nu_0 \in Ext(\Sigma(\L))$ such that $\deg_R(\nu_0+\phi(\nu_0))$ is as large as possible. Let the canonical divisor $K$ be $\nu_0+\phi(\nu_0)$.

For any $\nu \in Ext(\Sigma(\L))$, let $c=\varphi(\nu)$; then we have:
$$\phi(\nu)+\nu=\phi(\varphi^{-1}(c))+\varphi^{-1}(c)=\varphi^{-1}\eta(c)+\varphi^{-1}(c)=\lambda R+v-2 \times \onev,$$
where $\lambda \in \R$ is a constant depends on $\nu$ (or equivalently $c$). Hence, the choice of $K$ implies that for any $\nu \in Ext(\Sigma(\L))$, there exists $E_{\nu} \in \R^{n+1}_+$ such that $\phi(\nu)+\nu+E_{\nu}=K$. Therefore, for all divisor $D \in \Z^{n+1}$ and $\nu \in Ext(\Sigma(\L))$ we have:
\begin{eqnarray*}
 \deg^+_R(D-\nu) - \deg^+_R(K-D-\phi(\nu))  &=&\deg^+_R(D-\nu) - \deg^+_R(\phi(\nu)+\nu+E_{\nu}-D-\phi(\nu)) \\
   &=& \deg^+_R(D-\nu) - \deg^+_R(\nu+E_{\nu}-D)\\
   & \leq & \deg^+_R(D-\nu) - \deg^+_R(\nu-D) \\
   & = & \deg_R(D)-\deg_R(\nu)\\
   & \leq & \deg_R(D)-g_{\min}+1.
\end{eqnarray*}
Note that for all $\nu \in Ext(\Sigma(\L))$, $E_\nu=K-(\nu+\phi(\nu)) \leq 2g_{\max}-2g_{\min}$. Hence,
\begin{eqnarray*}
 \deg^+_R(D-\nu) - \deg^+_R(K-D-\phi(\nu))  &=&\deg^+_R(D-\nu) - \deg^+_R(\phi(\nu)+\nu+E_{\nu}-D-\phi(\nu)) \\
   &=& \deg^+_R(D-\nu) - \deg^+_R(\nu+E_{\nu}-D)\\
   & \geq & \deg^+_R(D-\nu) - \deg^+_R(\nu-D) - 2(g_{\max}-g_{\min}) \\
   & = & \deg_R(D)-\deg_R(\nu)-2g_{\max}+2g_{\min}\\
   & \geq & \deg_R(D)-3g_{\max}+2g_{\min}+1.
\end{eqnarray*}
Therefore for all $D \in \Z^{n+1}$ and all $\nu \in Ext(\Sigma(\L))$,
$$\deg_R(D)-3g_{\max}+2g_{\min}+1 \leq \deg^+_R(D-\nu) - \deg^+_R(K-D-\varphi(\nu)) \leq \deg_R(D)-g_{\min}+1.$$
For a fixed $D \in \Z^{n+1}$, $\deg_R(D)-3g_{\max}+2g_{\min}+1$ and $ \deg_R(D)-g_{\min}+1$ are constant integers, $\deg^+_R(D-\nu)$ and $\deg^+_R(K-D-\varphi(\nu))$ are integer value functions bounded from below by zero, and $\varphi$ is a bijection from $Ext(\Sigma(\L))$ to itself, hence Lemma~\ref{bijection_min_lem} implies that
$$\deg_R(D)-3g_{\max}+2g_{\min}+1 \leq \min_{\nu \in Ext(\Sigma(\L))}\deg^+_R(D-\nu) - \min_{\nu \in Ext(\Sigma(\L))}\deg^+_R(K-D-\nu) \leq \deg_R(D)-g_{\min}+1.$$
The assertion of the theorem now follows from Lemma~\ref{rank_degree_plus_lem}.
\end{proof}
\begin{definition}
{
Let $\L$ be a uniform sub-lattice of dimension $n$ of $\L_R$. We say $\L$ has the Riemann-Roch property if there exists a divisor $K$ with degree $2g-2$, where $g=g_{\min}=g_{\max}$, such that for all divisor $D \in \Z^{n+1}$:
$$r(D)-r(K-D)=\deg(D)-g+1.$$
}
\end{definition}
\begin{theorem}
\label{RI_equiv_RR_prop_thm}
Let $\L$ be a uniform sub-lattice of dimension $n$ of $\L_R$. Then $\L$ is reflection invariant if and only if $\L$ has the Riemann-Roch property.
\end{theorem}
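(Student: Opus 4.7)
The forward direction (reflection invariance implies the Riemann-Roch property) will follow immediately from Theorem~\ref{reflection_invariant_inequality}: when $g_{\min}=g_{\max}=g$, the two-sided estimate
$$\deg_R(D)-3g_{\max}+2g_{\min}+1 \leq r(D)-r(K-D) \leq \deg_R(D)-g_{\min}+1$$
collapses to the equality $r(D)-r(K-D)=\deg_R(D)-g+1$. I would also record that the canonical divisor produced in that proof has the right degree: recalling the construction $K=\nu_0+\phi(\nu_0)$ with $\nu_0,\phi(\nu_0)\in Ext(\Sigma(\L))$, uniformity forces $\deg_R(\nu_0)=\deg_R(\phi(\nu_0))=g-1$, so $\deg_R(K)=2g-2$, as the definition of the Riemann-Roch property demands.

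For the reverse direction, the plan is to manufacture an involution $\psi\colon Ext(\Sigma(\L))\to Ext(\Sigma(\L))$ satisfying $\nu+\psi(\nu)=K$, and then transport it to $Crit(\L)$ through the bijection $\varphi$ of Corollary~\ref{extreme_L_ciritical_cor}. Since the projection $\pi$ is linear, such an involution would yield, for every $\nu\in Ext(\Sigma(\L))$,
$$\pi(\nu+\onev)+\pi(\psi(\nu)+\onev)=\pi(K+2\onev).$$
Writing $v=\pi(K+2\onev)$ and $c=\varphi(\nu)$, the induced bijection on $Crit(\L)$ then sends $c$ to $v-c$, which rearranges to $-Crit(\L)=Crit(\L)-v$; i.e., $\L$ is reflection invariant.

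To build $\psi$, I would proceed as follows. Fix $\nu\in Ext(\Sigma(\L))$. Lemma~\ref{distance_rank_lem}(i) gives $r(\nu)=-1$, and uniformity supplies $\deg_R(\nu)=g-1$, so applying the Riemann-Roch identity at $D=\nu$ yields $r(K-\nu)=-1$, that is, $K-\nu\in\Sigma(\L)$. The main obstacle will be upgrading this membership to $K-\nu\in Ext(\Sigma(\L))$. The plan is to invoke Lemma~\ref{dominate_by_extreme_lem}(ii) to dominate $K-\nu$ by some $\nu'\in Ext(\Sigma(\L))$; uniformity then forces $\deg_R(\nu')=g-1=\deg_R(K-\nu)$, so the nonnegative vector $\nu'-(K-\nu)$ has vanishing $R$-degree and, because $R$ is strictly positive, must itself vanish. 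Hence $K-\nu=\nu'$ is extreme, and $\psi(\nu):=K-\nu$ is a well-defined involution, which supplies the bijection the outer argument requires.
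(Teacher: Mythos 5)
Your proposal is correct and follows essentially the same route as the paper: the forward direction collapses the two-sided bound of Theorem~\ref{reflection_invariant_inequality} using uniformity, and the reverse direction shows $\nu\mapsto K-\nu$ is a bijection of $Ext(\Sigma(\L))$ and transports it through $\varphi$ to obtain the translate of $Crit(\L)$. Your only addition is to spell out, via Lemma~\ref{dominate_by_extreme_lem}(ii) and the degree-zero argument, why $K-\nu$ is extreme, a step the paper passes over with a bare ``hence''; this is a welcome clarification rather than a different proof.
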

\begin{proof}
Assume $\L$ is reflection invariant and let $K$ be the canonical divisor obtained in the proof of Theorem~\ref{reflection_invariant_inequality}. By applying Theorem~\ref{reflection_invariant_inequality}, its enough to show that $\deg(K)=2g-2$. The construction of $K$ shows that $K=\nu + \phi(\nu)$, where $\phi$ is the bijection obtained in proof of Theorem~\ref{reflection_invariant_inequality}. Since $\L$ is uniform, $g_{\min}=g_{\max}=g$. Hence $\deg_R(\nu)=\deg_R(\phi(\nu))=g-1$ and this implies that $\deg_R(K)=2g-2$.

Now, assume that $\L$ has the Riemann property. Assume $\nu$ is an extreme divisor of $\Sigma(\L)$, so the first part of Lemma~\ref{distance_rank_lem} implies that $r(\nu)=-1$. Since $\L$ is uniform $\deg_R(\nu)=g-1$ and this shows that $r(K-\nu)=r(\nu)=-1$. By Lemma~\ref{distance_rank_lem}, $K-\nu \in \Sigma(\L)$, and is hence an extreme divisor of $\Sigma(\L)$. Hence the function $\psi$ defined as $\psi(-\nu)=K-\nu$, for all $\nu \in Ext(\L)$ is a bijection from $Ext(\L)$ to itself. If $\varphi$ is the function defined in Corollary~\ref{extreme_L_ciritical_cor}, the function $\varphi o \psi o \varphi^{-1}$ is a bijection from $Crit(\L)$ to itself. It is easy to see that for any $p \in Crit(\L)$, $\varphi(\psi(\varphi^{-1}(p)))=-p+\pi(K)+2\pi({\onev})$, and by picking $v=-\pi(K)-2\pi({\onev})$, we have $-Crit(\L)=Crit(\L)+v$.
\end{proof}
\begin{definition}
{
We say a sub-lattice $\L$ of $\L_R$ has the Riemann-Roch formula if there exists a an integer $m \in \Z$ and a divisor $K$ of degree $2m-2$ such that for all $D \in \Z^{n+1}$:
$$r(D)-r(K-D)=\deg_R(D)-m+1.$$
}
\end{definition}
\begin{theorem}
\label{RR_formula_equiv_U_RI_thm}
Let $\L$ be a sub-lattice of dimension $n$ of $\L_R$. Then $\L$ has a Riemann-Roch formula if and only if $\L$ is uniform and reflection invariant, in particular $\L$ has the Riemann-Roch property.
\end{theorem}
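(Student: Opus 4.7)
The plan is to handle the two directions separately. The backward direction is immediate from Theorem~\ref{RI_equiv_RR_prop_thm}: a uniform, reflection invariant lattice has the Riemann-Roch property, which is the Riemann-Roch formula specialized to $m = g$. The substantive work is the forward direction, so I assume an integer $m$ and canonical divisor $K$ of degree $2m - 2$ satisfying $r(D) - r(K-D) = \deg_R(D) - m + 1$ for all $D \in \Z^{n+1}$, and aim to derive uniformity and reflection invariance.

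The first step toward uniformity is the structural observation that each $\nu \in Ext(\Sigma(\L))$ is maximal in $\Sigma(\L)$ under the partial order $\geq$, not merely within its $\ell^1$-ball $N(\nu)$. Indeed, local extremality forces $\nu + e_i \notin \Sigma(\L)$ for every $i$ (since $\deg_R(\nu + e_i) = \deg_R(\nu) + r_i > \deg_R(\nu)$), so $\nu + e_i \geq p$ for some $p \in \L$; if $\nu' \in \Sigma(\L)$ satisfies $\nu' > \nu$ then $\nu' \geq \nu + e_i \geq p$, a contradiction. Now fix $\nu \in Ext(\Sigma(\L))$. Since $r(\nu) = -1$, the RR formula gives $r(K-\nu) = m - 2 - \deg_R(\nu)$. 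If $\deg_R(\nu) < m - 1$, then $r(K-\nu) \geq 0$, and Definition~\ref{rank_function_def} produces $E \geq \zerov$ with $\deg_R(E) = m - 1 - \deg_R(\nu) > 0$ and $K - \nu - E \in \Sigma(\L)$. Applying the RR formula to $K - \nu - E$ (rank $-1$, degree $m-1$) gives $r(\nu + E) = -1$, so $\nu + E \in \Sigma(\L)$; but $\nu + E > \nu$ contradicts the global maximality just established. Hence $\deg_R(\nu) = m - 1$ for every extreme $\nu$, so $g_{\min} = g_{\max} = m$.

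For reflection invariance, observe that the involution $\sigma: D \mapsto K - D$ sends each $\nu \in Ext(\Sigma(\L))$ to a divisor $K - \nu \in \Sigma(\L)$ of degree $m - 1$. By Lemma~\ref{dominate_by_extreme_lem}(ii), $K - \nu \leq \nu''$ for some $\nu'' \in Ext(\Sigma(\L))$, which by uniformity also has degree $m - 1$; then $\nu'' - (K - \nu)$ is effective of degree zero and must vanish, proving $K - \nu \in Ext(\Sigma(\L))$ and that $\sigma$ restricts to a bijection on $Ext(\Sigma(\L))$. Conjugating by the bijection $\varphi$ of Corollary~\ref{extreme_L_ciritical_cor} and using linearity of $\pi$, for every $c = \varphi(\nu) \in Crit(\L)$,
$$c + \varphi(\sigma(\varphi^{-1}(c))) = \pi(\nu + \onev) + \pi(K - \nu + \onev) = \pi(K + 2\onev),$$
which yields $-Crit(\L) = Crit(\L) - \pi(K + 2\onev)$. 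The ``in particular" clause then follows by invoking Theorem~\ref{RI_equiv_RR_prop_thm}. The main obstacle is the uniformity step; its crispness hinges on upgrading local extremality to global maximality in $\Sigma(\L)$, which turns the extra effective divisor $E$ produced by the RR formula into an immediate contradiction.
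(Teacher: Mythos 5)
Your proposal is correct in substance, and its core is the same construction as the paper's: starting from an extreme $\nu$ with $\deg_R(\nu)<m-1$, you use the Riemann-Roch formula to produce an effective $E\neq \zerov$ with $K-\nu-E\in\Sigma(\L)$, apply the formula once more to conclude $\nu+E\in\Sigma(\L)$, and contradict the extremality of $\nu$. The genuine differences are these. First, the paper does not pin down the degrees of extreme divisors directly; it first proves $m=g_{\max}$ by an argument on arbitrary divisors of large degree (every divisor of degree at least $m$ has nonnegative rank, and every divisor of degree greater than $2m-2$ satisfies $\deg_R(D)-r(D)=m$, which is compared with $\deg_R(D)-r(D)\leq g_{\max}$), and only then runs the $E$-construction to force uniformity. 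Your route is leaner in that it works exclusively with extreme divisors. Second, the paper obtains reflection invariance for free by invoking Theorem~\ref{RI_equiv_RR_prop_thm} once uniformity and $m=g_{\max}$ are known, whereas you re-derive it by showing $D\mapsto K-D$ preserves $Ext(\Sigma(\L))$ and transporting through $\varphi$; this essentially reproves the relevant direction of Theorem~\ref{RI_equiv_RR_prop_thm}, which is correct but redundant. A virtue of your write-up is that you make explicit a step the paper leaves implicit: a point that is extreme (locally, within $N(\nu)$) is in fact maximal in $\Sigma(\L)$ for the dominance order, which is needed because $\nu+E$ need not lie in $N(\nu)$.

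Two small repairs are needed. Your ``Hence $\deg_R(\nu)=m-1$'' only rules out $\deg_R(\nu)<m-1$; you must also exclude $\deg_R(\nu)>m-1$. This is immediate from the identity you already wrote, since $r(K-\nu)=m-2-\deg_R(\nu)\geq -1$ forces $\deg_R(\nu)\leq m-1$, but without saying it the conclusion $g_{\min}=g_{\max}=m$ (and the degree count in your reflection-invariance step) is not yet justified. Also, in the paper's convention $x>y$ means strict inequality in every coordinate, so ``$\nu+E>\nu$'' is not literally true for a general effective $E$; state the maximality as ``there is no $\nu'\in\Sigma(\L)$ with $\nu'\geq\nu$ and $\nu'\neq\nu$'' (your $e_i$ argument proves exactly this, choosing a coordinate where $\nu'$ exceeds $\nu$), and the contradiction then goes through since $E\geq\zerov$ and $E\neq\zerov$.
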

\begin{proof}
If $\L$ is uniform and reflection invariant, then Theorem~\ref{RI_equiv_RR_prop_thm} implies that $\L$ has Riemann-Roch property and therefore $\L$ has the Riemann-Roch formula with $m=g_{\max}$.

For proving the other direction it is enough by Theorem~\ref{RI_equiv_RR_prop_thm} to show that $\L$ is uniform and $m=g_{\max}$. First, we show that $m=g_{\max}$. Let $D$ be a divisor with $\deg_R(D) \geq m$. The Riemann-Roch formula implies that $r(D)-r(K-D) \geq 1$ and since $r(K-D) \geq -1$, we have $r(D) \geq 0$. It follows that $g_{\max} \leq m$.

We know that for any divisor $D \in \Z^{n+1}$, if the degree of $D$ is more that $g_{\max}-1$ then the divisor is effective, so $\deg_R(D)-r(D) \leq g_{\max}$. On the other hand, if $\deg_R(D) >2m-2$, then $\deg_R(K-D) < 0$, therefore $r(K-D)=-1$. The Riemann-Roch formula implies that $\deg(D)-r(D) = m$. Therefore, $m \leq g_{\max}$. This shows that $m=g_{\max}$.

To prove uniformity, let $\nu \in Ext(\Sigma(\L))$ and $\deg_R(\nu) < g_{\max}-1$. Since $\deg_R(K)=2g_{\max}-2$, $\deg_R(K-\nu) \geq g_{\max}$, so $K-\nu \not \in \Sigma(\L)$, and by Lemma~\ref{distance_rank_lem} is equivalent to an effective divisor. The Riemann-Roch formula implies that $r(K-\nu)=g_{\max}-\deg(\nu)-2$, so there exists an effective divisor $E$ of degree $g_{\max}-\deg(\nu)-1>0$ such that $|K-\nu-E|=\emptyset$. We claim that $\nu+E$ is not equivalent to an effective divisor. The Riemann-Roch formula implies that $r(\nu+E)-r(K-\nu-E)=\deg_R(\nu+E)-g_{\max}+1=0$ and therefore $r(\nu+E)=-1$. By Lemma~\ref{distance_rank_lem}, $\nu+E \in \Sigma(\L)$, contradicting the fact that $\nu \in Ext(\Sigma(\L))$.
\end{proof}
\subsection{Riemann-Roch Theorem for sub-lattice of $\L_R$ and $\L_{\onev}$}
Let $R=(r_0, \dots, r_n) \in \N^{n+1}$ and $\mathcal{R}=diag(r_0, \dots, r_n)$ be a matrix mapping $\L_R$ to $\L_{\onev}$.
To be more precise, for any $p \in \L_R$ the image of $p$ is $\mathcal{R}p$.
For any set $S \subseteq \R^{n+1}$, let $\mathcal{R}S$ denote the set $\{\mathcal{R}p: p \in S\}$.
It is easy to see that if $\L \subseteq \L_R$ is a sub-lattice of dimension $n$ then $\mathcal{R}\L$ is a sub-lattice of $\L_{\onev}$ of dimension $n$.
\begin{lemma}
\label{lem:sigma_R_one}
Let $\L$ be a sub-lattice of dimension $n$ of $\L_R$. Then $\mathcal{R}\Sigma(\L)=\Sigma(\mathcal{R}\L)$.
\end{lemma}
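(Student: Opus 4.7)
The plan hinges on one structural fact: since $\mathcal{R}=\mathrm{diag}(r_0,\dots,r_n)$ has strictly positive diagonal entries, left multiplication by $\mathcal{R}$ is an order-preserving bijection of $\R^{n+1}$ under the componentwise partial order. That is, for all $x,y\in\R^{n+1}$ one has $x\geq y$ if and only if $\mathcal{R}x\geq \mathcal{R}y$, simply because $r_i>0$ for each $i$. The restriction of $\mathcal{R}$ also gives a bijection $p\mapsto\mathcal{R}p$ between $\L$ and $\mathcal{R}\L$. With this observation in hand, the lemma is essentially the assertion that the defining condition of $\Sigma$ transports verbatim under $\mathcal{R}$.

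I would prove the two inclusions separately. For the forward inclusion $\mathcal{R}\Sigma(\L)\subseteq\Sigma(\mathcal{R}\L)$, fix an arbitrary $D\in\Sigma(\L)$ and show $\mathcal{R}D\in\Sigma(\mathcal{R}\L)$ by contradiction. Any element $p'\in\mathcal{R}\L$ has the form $p'=\mathcal{R}p$ with $p\in\L$; if $\mathcal{R}D\geq p'$, then the order-preservation property gives $D\geq p$, contradicting $D\in\Sigma(\L)$. Hence $\mathcal{R}D\not\geq p'$ for every $p'\in\mathcal{R}\L$.

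For the reverse inclusion $\Sigma(\mathcal{R}\L)\subseteq\mathcal{R}\Sigma(\L)$, I take $D'\in\Sigma(\mathcal{R}\L)$, produce $D\in\Z^{n+1}$ with $\mathcal{R}D=D'$, and then observe that the same order-preservation argument shows $D\in\Sigma(\L)$: if $D\geq p$ for some $p\in\L$, then $D'=\mathcal{R}D\geq \mathcal{R}p\in\mathcal{R}\L$, contradicting $D'\in\Sigma(\mathcal{R}\L)$. Thus, once a valid integer preimage $D=\mathcal{R}^{-1}D'$ is secured, the verification of membership in $\Sigma(\L)$ is entirely symmetric to the forward direction.

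The main obstacle I anticipate is precisely the integrality step in the reverse direction: asserting $D=\mathcal{R}^{-1}D'\in\Z^{n+1}$ requires that each $r_i$ divides $D'_i$, which is not immediate from $D'\in\Z^{n+1}$ alone. I expect the resolution to come either from a minimality or replacement argument showing that any $D'\in\Sigma(\mathcal{R}\L)$ can be exchanged for a coordinatewise-comparable element of $\mathcal{R}\Z^{n+1}\cap\Sigma(\mathcal{R}\L)$, or from unpacking the definition of $\Sigma$ in a way that restricts attention to the coset structure compatible with $\mathcal{R}$. This integrality check is the technical core of the proof; once it is in place, the two-directional order-preservation of $\mathcal{R}$ closes everything immediately.
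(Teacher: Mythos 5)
Your forward inclusion is correct, and it is essentially all the paper itself offers: the paper's ``proof'' is a one-line assertion that the lemma ``follows easily'' from Definition~\ref{sigma_region_def} and the invertibility and positivity of $\mathcal{R}$. The reverse inclusion is exactly where you stalled, and your instinct that the integrality of $\mathcal{R}^{-1}D'$ is the crux is right --- but it is not a repairable technicality: no minimality or replacement argument can fill it, because the asserted set equality is false in general for the integer Sigma regions. Take $n=1$, $R=(1,2)$, $\L=\L_R$, which is generated by $(-2,1)$, so that $\mathcal{R}\L$ is generated by $(-2,2)$. The divisor $D'=(-1,1)$ satisfies $D'\not\geq(-2k,2k)$ for every $k\in\Z$ (such a $k$ would have to equal $1/2$), so $D'\in\Sigma(\mathcal{R}\L)$; yet every element of $\mathcal{R}\Sigma(\L)$ has even second coordinate, so $D'\notin\mathcal{R}\Sigma(\L)$. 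What your two order-preservation arguments actually establish is $\mathcal{R}\Sigma(\L)\subseteq\Sigma(\mathcal{R}\L)$ and, more precisely, $\Sigma(\mathcal{R}\L)\cap\mathcal{R}\Z^{n+1}=\mathcal{R}\Sigma(\L)$ --- nothing more, and nothing more is true at the level of $\Z^{n+1}$.

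The statement that the subsequent arguments genuinely need (for instance the proof of Lemma~\ref{lem:extremesigmaclosure_R_one}, which works inside $\overline{\Sigma}_{\R}$) is the real-region analogue $\mathcal{R}\Sigma_{\R}(\L)=\Sigma_{\R}(\mathcal{R}\L)$, equivalently $\mathcal{R}\overline{\Sigma}_{\R}(\L)=\overline{\Sigma}_{\R}(\mathcal{R}\L)$. For that version your symmetric two-inclusion plan closes immediately: $\mathcal{R}^{-1}x$ is a legitimate real point, so if $\mathcal{R}^{-1}x\geq p$ for some $p\in\L$ then $x\geq\mathcal{R}p$, and there is no divisibility constraint to check. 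So you should either prove the real-region statement (your argument, verbatim, with $\R^{n+1}$ in place of $\Z^{n+1}$) or record only the one true inclusion plus the intersection identity above; the equality as printed, for $\Sigma$ on integer divisors, cannot be proved, and the paper's one-line justification glosses over precisely the point you isolated.
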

The proof of above lemma follows easily from Definition~\ref{sigma_region_def} and the fact that $\mathcal{R}$ is an invertible matrix with positive diagonal entries.
\begin{lemma}
\label{lem:extremesigmaclosure_R_one}
Let $\L$ be a sub-lattice of dimension $n$ of $\L_R$. Then $\mathcal{R}Ext(\overline{\Sigma}_{\R}(\L))=Ext(\overline{\Sigma}_{\R}(\mathcal{R}\L))$.
\end{lemma}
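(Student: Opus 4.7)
The plan is to exploit the fact that $\mathcal{R}=diag(r_0,\dots,r_n)$ is a linear homeomorphism of $\R^{n+1}$ whose diagonal entries are strictly positive, so it preserves both the strict partial order $>$ and the weak order $\geq$ in each direction. This has three useful consequences that together give the lemma directly from Definition~\ref{extreme_critical_def}.

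First, using Lemma~\ref{sigma_closure}, both closures can be described by an order condition: $x\in\overline{\Sigma}_{\R}(\L)$ iff $x\not>p$ for all $p\in\L$, and likewise for $\mathcal{R}\L$. Since $\mathcal{R}$ is an order-preserving bijection from $\L$ onto $\mathcal{R}\L$, for any $x\in\R^{n+1}$ we have $\mathcal{R}x\not>\mathcal{R}p$ iff $x\not>p$, and hence $\mathcal{R}\,\overline{\Sigma}_{\R}(\L)=\overline{\Sigma}_{\R}(\mathcal{R}\L)$ --- the continuous analogue of Lemma~\ref{lem:sigma_R_one}.

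Second, a short calculation gives the degree identity $\deg_{\onev}(\mathcal{R}x)=\onev\cdot\mathcal{R}x=\sum_i r_i x_i=R\cdot x=\deg_R(x)$, so $\mathcal{R}$ transports the degree function used to define extreme points of $\overline{\Sigma}_{\R}(\L)$ to the corresponding degree function on $\overline{\Sigma}_{\R}(\mathcal{R}\L)$. Third, since $\mathcal{R}$ and $\mathcal{R}^{-1}$ are continuous linear maps, for any $\nu$ and any $\delta>0$ there exists $\delta'>0$ with $\mathcal{R}(B(\nu,\delta))\supseteq B(\mathcal{R}\nu,\delta')$, and conversely, so the local-neighborhood clause in the definition of extreme point transfers under $\mathcal{R}$ in both directions.

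Combining these three observations, $\nu$ is a local maximum of $\deg_R$ on $\overline{\Sigma}_{\R}(\L)$ within some ball $B(\nu,\delta)$ if and only if $\mathcal{R}\nu$ is a local maximum of $\deg_{\onev}$ on $\overline{\Sigma}_{\R}(\mathcal{R}\L)$ within some ball about $\mathcal{R}\nu$, which is precisely $\mathcal{R}\,Ext(\overline{\Sigma}_{\R}(\L))=Ext(\overline{\Sigma}_{\R}(\mathcal{R}\L))$. No step is a real obstacle; the only bookkeeping point is to recognize that the degree function implicitly changes from $\deg_R$ to $\deg_{\onev}$ when the ambient lattice changes from $\L_R$ to $\L_{\onev}$, and the degree identity above makes this change transparent. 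An alternative route would go through Theorem~\ref{extreme_sigma_closure_thm} by checking that $\mathcal{R}\Delta_R(p)=\Delta_{\onev}(\mathcal{R}p)$ with facets sent to facets and interiors preserved; either approach works equally well.
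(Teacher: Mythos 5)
Your proposal is correct and follows essentially the same route as the paper: transport the region, the degree function (via $\deg_R(x)=\deg_{\onev}(\mathcal{R}x)$), and the local-neighborhood condition through the positive diagonal map $\mathcal{R}$. Your only refinement is that you explicitly verify the continuous analogue $\mathcal{R}\,\overline{\Sigma}_{\R}(\L)=\overline{\Sigma}_{\R}(\mathcal{R}\L)$ from Lemma~\ref{sigma_closure}, a point the paper handles by citing Lemma~\ref{lem:sigma_R_one}.
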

\begin{proof}
Let $\nu \in Ext(\overline{\Sigma}_{\R}(\L))$ so that there exists some $\delta >0$ such that for all $p \in B(\nu, \delta) \cap \overline{\Sigma}_{\R}(\L)$, $\deg_R(\nu) \geq \deg_R(p)$. Let $\delta' = \delta$.  It is easy to see that if $q \in B(\mathcal{R}\nu, \delta')$, we have $\mathcal{R}^{-1}q \in B(\nu, \delta)$. Hence
$\deg_R(\mathcal{R}^{-1}q) \leq \deg_R(\nu)$ and therefore $\deg_{\onev}(q) \leq \deg_{\onev}(\mathcal{R}\nu)$. Here we have used the fact that for any $D \in \Z^{n+1}$, $\deg_R(D)=\deg_{\onev}(\mathcal{R}D)$ and Lemma~\ref{lem:sigma_R_one}. This proves that $\mathcal{R}Ext(\overline{\Sigma}_{\R}(\L)) \subseteq Ext(\overline{\Sigma}_{\R}(\mathcal{R}\L))$. The other direction is proved similarly.
\end{proof}
The following corollary immediately follows from Lemma~\ref{lem:extremesigmaclosure_R_one} and Theorem~\ref{extreme_sigma_closure_sigma_thm}.
\begin{corollary}
\label{cor:uniformity_R_one}
Let $\L$ be a sub-lattice of dimension $n$ of $\L_R$. Then $\L$ is uniform if and only if $\mathcal{R}\L \subseteq \L_{\onev}$ is uniform.
\end{corollary}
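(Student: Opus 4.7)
The plan is to build an explicit bijection between $Ext(\Sigma(\L))$ and $Ext(\Sigma(\mathcal{R}\L))$ under which the relevant degrees shift by a single additive constant independent of the extreme point, and then read off uniformity.

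First I would chain together the tools already provided. Theorem~\ref{extreme_sigma_closure_sigma_thm}, applied to $\L$, identifies $Ext(\Sigma(\L))$ with $Ext(\overline{\Sigma}_{\R}(\L))$ via $\nu \mapsto \nu + \onev$. Lemma~\ref{lem:extremesigmaclosure_R_one} then gives a bijection $Ext(\overline{\Sigma}_{\R}(\L)) \to Ext(\overline{\Sigma}_{\R}(\mathcal{R}\L))$ via multiplication by $\mathcal{R}$. Finally, Theorem~\ref{extreme_sigma_closure_sigma_thm} applied to the lattice $\mathcal{R}\L \subseteq \L_{\onev}$ identifies $Ext(\overline{\Sigma}_{\R}(\mathcal{R}\L))$ with $Ext(\Sigma(\mathcal{R}\L))$ via $\mu' \mapsto \mu' - \onev$. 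Composing these three maps gives a bijection
$$\Phi:Ext(\Sigma(\L)) \longrightarrow Ext(\Sigma(\mathcal{R}\L)), \qquad \Phi(\nu)=\mathcal{R}(\nu+\onev)-\onev.$$

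Next I would compute how $\Phi$ changes degrees. For any $\nu \in Ext(\Sigma(\L))$,
$$\deg_{\onev}(\Phi(\nu)) = \onev \cdot \mathcal{R}(\nu+\onev) - \onev\cdot\onev = R\cdot(\nu+\onev) - (n+1) = \deg_R(\nu) + \bigl(\deg_R(\onev)-(n+1)\bigr).$$
Thus $\deg_{\onev}(\Phi(\nu)) - \deg_R(\nu) = \deg_R(\onev)-(n+1)$, a constant $c$ depending only on $R$, not on $\nu$.

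Because $\Phi$ is a bijection and shifts degrees by the same constant $c$ on every element of $Ext(\Sigma(\L))$, the extremal degree values satisfy $g_{\min}(\mathcal{R}\L)=g_{\min}(\L)+c$ and $g_{\max}(\mathcal{R}\L)=g_{\max}(\L)+c$. Hence $g_{\min}(\L)=g_{\max}(\L)$ if and only if $g_{\min}(\mathcal{R}\L)=g_{\max}(\mathcal{R}\L)$, i.e. $\L$ is uniform iff $\mathcal{R}\L$ is uniform. There is essentially no obstacle here beyond bookkeeping: the only subtlety is making sure the translations by $\onev$ land correctly in $\Sigma$ versus $\overline{\Sigma}_{\R}$, which is exactly what Theorem~\ref{extreme_sigma_closure_sigma_thm} guarantees, and that the degree-shift constant genuinely does not depend on $\nu$, which follows from linearity of $\deg_R$ and the identity $\onev \cdot \mathcal{R}x = R\cdot x$.
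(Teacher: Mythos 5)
Your proof is correct and takes essentially the same route as the paper, which simply states that the corollary follows from Lemma~\ref{lem:extremesigmaclosure_R_one} and Theorem~\ref{extreme_sigma_closure_sigma_thm}. Your write-up just makes explicit the composed bijection $\nu \mapsto \mathcal{R}(\nu+\onev)-\onev$ and the constant degree shift coming from $\onev \cdot \mathcal{R}x = R \cdot x$, which is exactly the intended argument.
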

\begin{lemma}
\label{lem:reflection_R_one}
Let $\L$ be a uniform sub-lattice of dimension $n$ of $\L_R$. Then $\L$ is reflection invariant if and only if $\mathcal{R}\L \subseteq \L_{\onev}$ is reflection invariant.
\end{lemma}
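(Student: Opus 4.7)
The plan is to construct an explicit invertible affine bijection $\Psi : Crit(\L) \to Crit(\mathcal{R}\L)$ whose linear part is $\mathcal{R}$, and then transfer reflection invariance by applying $\Psi$ to both sides of the relation $-Crit(\L) = Crit(\L) + v$.

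First I would assemble three bijections from earlier in the section. Corollary~\ref{extreme_L_ciritical_cor} yields $\varphi_R : Ext(\Sigma(\L)) \to Crit(\L)$ given by $\varphi_R(\nu) = \pi_R(\nu + \onev)$, and the analogous $\varphi_{\onev} : Ext(\Sigma(\mathcal{R}\L)) \to Crit(\mathcal{R}\L)$, $\varphi_{\onev}(\mu) = \pi_{\onev}(\mu + \onev)$, where $\pi_R$ and $\pi_{\onev}$ denote the projections along $R$ and $\onev$ respectively. Combining Lemma~\ref{lem:extremesigmaclosure_R_one} with Theorem~\ref{extreme_sigma_closure_sigma_thm} furnishes a bijection $\Phi : Ext(\Sigma(\L)) \to Ext(\Sigma(\mathcal{R}\L))$ given by $\Phi(\nu) = \mathcal{R}(\nu + \onev) - \onev$. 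I set $\Psi := \varphi_{\onev} \circ \Phi \circ \varphi_R^{-1}$.

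The key step is to show that $\Psi$ is a single affine map. For $c \in Crit(\L)$ and $\nu := \varphi_R^{-1}(c)$, writing $\nu + \onev = c + \lambda R$ gives $\lambda = (\deg_R(\nu) + \sum_i r_i)/\|R\|^2$. Here uniformity enters crucially: by definition of uniformity, $\deg_R(\nu) = g - 1$ is independent of $\nu \in Ext(\Sigma(\L))$, so $\lambda$ is a fixed constant depending only on $\L$ and $R$. Since $\mathcal{R}c \in H_{\onev}$ (from $\onev \cdot \mathcal{R}c = R \cdot c = 0$) and $\onev \cdot \mathcal{R}R = \|R\|^2$, projecting $\Phi(\nu) + \onev = \mathcal{R}c + \lambda \mathcal{R}R$ onto $H_{\onev}$ along $\onev$ yields
\[
\Psi(c) = \mathcal{R}c + w, \qquad w := \lambda\Bigl(\mathcal{R}R - \tfrac{\|R\|^2}{n+1}\onev\Bigr).
\]
Because $\mathcal{R}$ is invertible, $\Psi$ is an invertible affine bijection with linear part $\mathcal{R}$ and translation part $w$.

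Finally I would conclude as follows. If $-Crit(\L) = Crit(\L) + v$, applying $\Psi$ to both sides and using $\Psi(Crit(\L)) = Crit(\mathcal{R}\L)$ gives $-Crit(\mathcal{R}\L) + 2w = Crit(\mathcal{R}\L) + \mathcal{R}v$, i.e.\ $-Crit(\mathcal{R}\L) = Crit(\mathcal{R}\L) + (\mathcal{R}v - 2w)$, so $\mathcal{R}\L$ is reflection invariant. The converse follows by the same argument applied to the inverse affine bijection $\Psi^{-1}$, noting that $\mathcal{R}\L$ uniform implies $\L$ uniform by Corollary~\ref{cor:uniformity_R_one}. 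The main obstacle is verifying that $\lambda$ is constant across $Crit(\L)$; without uniformity it would depend on $c$ and $\Psi$ would only be piecewise affine, so the clean linear-algebraic transfer of reflection invariance would break.
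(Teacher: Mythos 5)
Your proof is correct and takes essentially the same route as the paper's: both transfer the relation $-Crit(\L)=Crit(\L)+v$ through the $\mathcal{R}$-correspondence of extreme points (Lemma~\ref{lem:extremesigmaclosure_R_one} with Theorem~\ref{extreme_sigma_closure_critical_thm} and Corollary~\ref{extreme_L_ciritical_cor}), with uniformity ensuring the offset along $R$ (resp.\ $\onev$) is the same constant for every critical point. Packaging this as a single affine bijection $\Psi(c)=\mathcal{R}c+w$ is a cleaner rendering of the paper's pointwise computation, not a genuinely different argument.
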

\begin{proof}
First suppose $\L$ is reflection invariant. Then there exists a vector $v \in \R^{n+1}$ such that $-Crit(\L)=Crit(\L)+v$. By applying Lemma~\ref{lem:extremesigmaclosure_R_one} and Theorem~\ref{extreme_sigma_closure_critical_thm}, let $\mathcal{R}\nu-\onev-\deg_{\onev}(R\nu - \onev)\onev$ be an arbitrary point of $Crit(\mathcal{R}\L)$ where $\nu$ is an arbitrary point of  $Ext(\overline{\Sigma}_{\R}(\L))$.
Now, by applying Theorem~\ref{extreme_sigma_closure_critical_thm}, $$\nu-\onev-\deg_R(\nu-\onev)R \in Crit(\L).$$ Since $\L$ is reflection invariant, there exists $\nu' \in Ext(\overline{\Sigma}_{\R}(\L))$ such that
$$-\nu+\onev+\deg_R(\nu-\onev)R=\nu'-\onev-\deg_R(\nu'-\onev)R+v,$$
therefore
$$-\mathcal{R}\nu+\mathcal{R}\onev+\deg_R(\nu-\onev)\mathcal{R}R=\mathcal{R}\nu'-\mathcal{R}\onev-\deg_R(\nu'-\onev)\mathcal{R}R+\mathcal{R}v.$$
Since $\L$ is uniform $\deg_R(\nu-\onev)$ is a constant independent from the choice of  $\nu \in Ext(\overline{\Sigma}_{\R}(\L))$. Hence, $\mathcal{R}\nu-\mathcal{R}\nu'=u$ where $u$ is constant vector in $\R^{n+1}$ which does not depend on $\nu$ or $\nu'$. Since $\mathcal{R}\L$ is uniform, $\deg_{\onev}(\mathcal{R}\nu-\onev)$ is a constant independent from the choice of $\nu \in Ext(\overline{\Sigma}_{\R}(\L))$. This shows that
$$\mathcal{R}\nu-\mathcal{R}\nu'=u+2\deg_{\onev}(\mathcal{R}\nu-\onev)+2\times \onev.$$
Hence $\mathcal{R}\L$ is reflection invariant. The other direction is proved similarly.
\end{proof}
Recall the definition of the canonical vector (Definition~\ref{canonical_def}) and the argument in the proof of Lemma~\ref{reflection_invariant_inequality} in constructing a canonical vector for a reflection invariant sublattice of $\L_R$. So we can consider the following corollary as a consequence of Theorem~\ref{extreme_sigma_closure_sigma_thm}, Lemma~\ref{lem:extremesigmaclosure_R_one}, and Lemma~\ref{lem:reflection_R_one}.
\begin{corollary}
\label{cor:canonical_1_R}
Let $\L$ be a reflection invariant sub-lattice of dimension $n$ of $\L_{R}$. If $K$ is a canonical vector of $\mathcal{R}\L$ then $\mathcal{R}^{-1}(K+2\times \onev)-2\times \onev$ is a canonical vector of $\L$.
\end{corollary}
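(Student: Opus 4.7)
The plan is to track the construction of the canonical divisor in the proof of Theorem~\ref{reflection_invariant_inequality} through multiplication by $\mathcal{R}$. Recall from that proof that for a reflection invariant lattice the canonical divisor is built as $K_{\L} = \nu_0 + \phi_{\L}(\nu_0)$, where $\phi_{\L} := \varphi^{-1}\eta\varphi$ is a bijection on $Ext(\Sigma(\L))$ obtained from the reflection bijection $\eta$ on $Crit(\L)$ via the bijection $\varphi$ of Corollary~\ref{extreme_L_ciritical_cor}, and $\nu_0 \in Ext(\Sigma(\L))$ is chosen so that $\deg_R(\nu + \phi_{\L}(\nu))$ is maximized. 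Applied to $\mathcal{R}\L$ the same recipe produces the given canonical $K = \mu_0 + \phi_{\mathcal{R}\L}(\mu_0)$.

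First, I would assemble the natural dictionary between the two sides. Theorem~\ref{extreme_sigma_closure_sigma_thm} combined with Lemma~\ref{lem:extremesigmaclosure_R_one} produces the bijection
\[
\Psi : Ext(\Sigma(\L)) \longrightarrow Ext(\Sigma(\mathcal{R}\L)), \qquad \Psi(\nu) = \mathcal{R}(\nu + \onev) - \onev,
\]
and Theorem~\ref{extreme_sigma_closure_critical_thm} together with the same lemma induces a parallel bijection $Crit(\L) \to Crit(\mathcal{R}\L)$. Because $\deg_R(\nu) = \deg_{\onev}(\mathcal{R}\nu)$, the quantity $\deg_{\onev}(\Psi(\nu)+\Psi(\phi_{\L}(\nu)))$ differs from $\deg_R(\nu+\phi_{\L}(\nu))$ by a constant depending only on $R$, so $\Psi$ transports a maximizer $\nu_0$ for $\L$ to a maximizer $\mu_0 = \Psi(\nu_0)$ for $\mathcal{R}\L$.

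Second, I would verify that $\Psi$ intertwines the two auxiliary bijections, i.e.\ $\Psi \circ \phi_{\L} = \phi_{\mathcal{R}\L} \circ \Psi$. Through Corollary~\ref{extreme_L_ciritical_cor} and the explicit projection formulas this reduces to showing that the reflection bijection on $Crit(\mathcal{R}\L)$ is the conjugate, via the critical-point analogue of $\Psi$, of the reflection bijection on $Crit(\L)$, which is exactly the content of the computations in the proof of Lemma~\ref{lem:reflection_R_one} (the reflection vector on $\mathcal{R}\L$ is computed there from that on $\L$ up to a controlled $\onev$-correction). Granted this compatibility, one has $\phi_{\mathcal{R}\L}(\mu_0) = \Psi(\phi_{\L}(\nu_0))$, and a direct calculation yields
\[
K = \Psi(\nu_0) + \Psi(\phi_{\L}(\nu_0)) = \mathcal{R}(\nu_0 + \phi_{\L}(\nu_0)) + 2(\mathcal{R}\onev - \onev) = \mathcal{R}(K_{\L} + 2\onev) - 2\onev,
\]
from which solving for $K_{\L}$ gives the claimed formula $K_{\L} = \mathcal{R}^{-1}(K + 2\onev) - 2\onev$. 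As a byproduct the right-hand side automatically lies in $\Z^{n+1}$, since $\mathcal{R}^{-1}(K+2\onev) = \nu_0 + \phi_{\L}(\nu_0) + 2\onev$.

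The main obstacle is the intertwining in step two; everything else is routine unpacking of definitions once one has the degree-preserving dictionaries between extreme divisors and between critical points. The intertwining requires matching two reflection bijections whose shift vectors live in different hyperplanes, but the bookkeeping has essentially been done in the proof of Lemma~\ref{lem:reflection_R_one}, so the verification amounts to following the formulas there and checking that the uniformity of $\L$ (equivalently, of $\mathcal{R}\L$, by Corollary~\ref{cor:uniformity_R_one}) makes all of the scalar corrections independent of the chosen extreme divisor.
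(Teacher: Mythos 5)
Your proposal is correct and is essentially the paper's own argument: the paper's ``proof'' is merely a pointer to the construction of the canonical divisor in Theorem~\ref{reflection_invariant_inequality} combined with Theorem~\ref{extreme_sigma_closure_sigma_thm}, Lemma~\ref{lem:extremesigmaclosure_R_one} and Lemma~\ref{lem:reflection_R_one}, which is exactly the dictionary $\nu \mapsto \mathcal{R}(\nu+\onev)-\onev$ and the reflection-transfer computation that you carry out in detail, ending in the same identity $K=\mathcal{R}(K_{\L}+2\onev)-2\onev$. Note only that, like the paper (whose cited Lemma~\ref{lem:reflection_R_one} is stated for uniform lattices), your intertwining step relies on uniformity, which is not explicit in the corollary's hypotheses but is present in every situation where the corollary is invoked.
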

The following theorem immediately follows from Theorem~\ref{RR_formula_equiv_U_RI_thm}, Corollary~\ref{cor:uniformity_R_one} and Lemma~\ref{lem:reflection_R_one}.
\begin{theorem}
\label{thm:RR_R_one}
Let $\L$ be a uniform sub-lattice of dimension $n$ of $\L_R$. Then $\L$ has the Riemann-Roch property if and only if $\mathcal{R}\L \subseteq \L_{\onev}$ has the Riemann-Roch property.
\end{theorem}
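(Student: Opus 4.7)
The plan is to string together the three preceding results in the obvious way, since the statement is structurally a transport of the characterization in Theorem~\ref{RR_formula_equiv_U_RI_thm} under the diagonal scaling $\mathcal{R}$. The key observation is that Theorem~\ref{RR_formula_equiv_U_RI_thm} tells us that for a sublattice of dimension $n$, possession of a Riemann-Roch formula is equivalent to being simultaneously uniform and reflection invariant. So I will argue that each of these two properties is preserved under passing from $\L$ to $\mathcal{R}\L$, and then the theorem will follow immediately by chaining equivalences.

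First I would apply Corollary~\ref{cor:uniformity_R_one}: the hypothesis that $\L$ is uniform gives directly that $\mathcal{R}\L$ is uniform as well. This is the easy half, since the scaling map $\mathcal{R}\colon\L_R \to \L_{\onev}$ is a degree-preserving bijection (in the sense that $\deg_R(D)=\deg_{\onev}(\mathcal{R}D)$, as noted in the proof of Lemma~\ref{lem:extremesigmaclosure_R_one}) and it sends $\Sigma(\L)$ onto $\Sigma(\mathcal{R}\L)$ by Lemma~\ref{lem:sigma_R_one}, so it carries $Ext(\Sigma(\L))$ bijectively onto $Ext(\Sigma(\mathcal{R}\L))$ preserving the degree values $g_{\min}$ and $g_{\max}$.

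Next, with both $\L$ and $\mathcal{R}\L$ known to be uniform, I can invoke Lemma~\ref{lem:reflection_R_one} to conclude that $\L$ is reflection invariant if and only if $\mathcal{R}\L$ is reflection invariant. At this point I would assemble the argument: by Theorem~\ref{RR_formula_equiv_U_RI_thm} applied to $\L$, the Riemann-Roch property for $\L$ is equivalent to $\L$ being uniform and reflection invariant; by the two results just invoked, this in turn is equivalent to $\mathcal{R}\L$ being uniform and reflection invariant; and by Theorem~\ref{RR_formula_equiv_U_RI_thm} applied to $\mathcal{R}\L$, this is equivalent to $\mathcal{R}\L$ having the Riemann-Roch property. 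This completes the proof.

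There is essentially no hard step here, since all the substantive work was already done in Corollary~\ref{cor:uniformity_R_one} and Lemma~\ref{lem:reflection_R_one}; the only thing to be careful about is that the definition of the Riemann-Roch property presupposes uniformity, so one must verify uniformity of $\mathcal{R}\L$ before invoking reflection invariance on that side. If one wanted to make the correspondence more concrete, one could additionally record via Corollary~\ref{cor:canonical_1_R} how a canonical divisor $K$ for $\mathcal{R}\L$ pulls back to the canonical divisor $\mathcal{R}^{-1}(K+2\,\onev)-2\,\onev$ for $\L$, but this is not needed for the bare equivalence.
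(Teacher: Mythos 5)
Your proposal is correct and matches the paper's argument: the paper likewise derives Theorem~\ref{thm:RR_R_one} immediately by combining Theorem~\ref{RR_formula_equiv_U_RI_thm} with Corollary~\ref{cor:uniformity_R_one} and Lemma~\ref{lem:reflection_R_one}. Your added remark about checking uniformity of $\mathcal{R}\L$ before invoking reflection invariance is a sensible precaution but does not change the route.
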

\section{Chip-Firing Game on Directed Graphs \label{chip_sec}}
\subsection{Row Chip-Firing Game, The Sandpile Model and Riemann-Roch Theory}
Let $\vec{G}$ be a directed graph with vertex set $\{v_0, ..., v_n \}$ and adjacency matrix $\vec{A}$ whose entry $\vec{A}_{i,j}$ for $0\leq i,j \leq n$ is the number of edges directed from $v_i$ to $v_j$.  Let $\vec{\mathcal{D}}=diag(\deg^+(v_0), \dots, \deg^+(v_n))$ where $\deg^+(v)$ denotes the number edges leaving vertex $v \in V(\vec{G})$. We call the matrix $\vec{Q}=\vec{\mathcal{D}}-\vec{A}$ the {\it Laplacian matrix} of the directed graph $\vec{G}$.  We define $\L_{\vec{G}}$ to be the lattice spanned by the rows of $\vec{Q}$.

In this section we study the following row chip-firing game on vertices of $\vec{G}$.  Begin with $D \in \Z^{n+1}$, which we call a configuration or a divisor, whose $i$th entry $D(v_i)$ is the number of chips at vertex $v_i$. In each {\it move} of the game either a vertex {\it borrows} or {\it fires}. We say a vertex {\it fires} if it sends a chip along each of its outgoing edges to its neighbors and {\it borrows} if it receives a chip along each of its incoming edges from its neighbors. We say that a vertex is in {\it debt} if the number of chips at that vertex is negative. The objective of the game is to bring every vertex out of debt by some sequence of moves.  Note that the game is ``commutative'' in the sense that the order of firings and borrowings does not effect the final configuration.  For $f \in \Z^{n+1}$, we may interpret the divisor $D'=D-\vec{Q}^Tf$ as the divisor obtained from $D$ by a sequence of moves in which the vertex $v_i$ fires $f(v_i)$ times if $f(v_i) \geq 0$ and it borrows $f(v_i)$ times if $f(v_i) \leq 0$. We refer to $f$ as a {\it firing strategy}. Note that both firing strategies and divisors are vectors in $\Z^{n+1}$. We say a configuration is a {\it winning configuration} if all of the vertices are out of debt. We call a sequence of moves which achieves a winning configuration a {\it winning strategy}. The question of whether a winning strategy exists is equivalent to the question of whether there exists a firing strategy $f\in \Z^{n+1}$ and an effective divisor $E \in \Z_{\geq 0}^{n+1}$ such that $E=D+\vec{Q}^T f$, i.e., $D-E \in \L_{\vec{G}}$, $|D| \neq \emptyset$ or $r(D)\geq 0$.  In what follows we will restrict our attention to strongly connected directed graphs.  The main motivation for this consideration is given in the following lemma which, interperetted combinatorially, characterizes strongly connected digraphs in terms of which firings leave a divisor unaffected.

\begin{lemma}
\label{lem:leftkernel}
A directed graph $\vec{G}$ is strongly connected if and only if there exists a vector $R \in \N^{n+1}$, unique up to multiplication by a real constant, such that ${\vec{Q}}^TR=0$.
\end{lemma}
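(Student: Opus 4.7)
The plan is to recognize $\vec{Q}^T R = 0$ as the stationary-measure equation for the uniform-out-edge random walk on $\vec{G}$, and then apply Perron--Frobenius. Concretely, write $P := \vec{\mathcal{D}}^{-1}\vec{A}$ for the transition matrix of the walk that, from each vertex, chooses one of its outgoing edges uniformly. Under the substitution $\pi = \vec{\mathcal{D}} R$, the equation $\vec{Q}^T R = 0$, namely $\vec{\mathcal{D}}R = \vec{A}^T R$, rewrites as $\pi^T = \pi^T P$. So positive kernel vectors of $\vec{Q}^T$ correspond bijectively (via $R \mapsto \vec{\mathcal{D}}R$) to positive stationary measures of $P$, and uniqueness transfers directly between the two pictures.

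For the forward direction, if $\vec{G}$ is strongly connected then $P$ is an irreducible row-stochastic matrix, so by the Perron--Frobenius theorem it has a positive left eigenvector for the eigenvalue $1$, unique up to scaling. Since $P$ has rational entries this eigenvector can be taken in $\mathbb{Q}_{>0}^{n+1}$; clearing denominators in $\pi$ and then in $R = \vec{\mathcal{D}}^{-1}\pi$ yields a positive integer vector in $\ker \vec{Q}^T$, unique up to a real scalar.

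For the reverse direction I argue contrapositively, using the condensation DAG of $\vec{G}$. If there are two or more sink strongly connected components $C_1, C_2, \ldots$, then applying the forward direction inside each $C_j$ and extending the resulting measure by zero off $C_j$ produces linearly independent vectors in $\ker \vec{Q}^T$; the extension lies in the kernel precisely because sink SCCs have no outgoing edges, which makes the equation $\deg^+(v) r_v = \sum_{u \to v} \vec{A}_{uv} r_u$ at any vertex outside $C_j$ reduce to $0 = 0$. Uniqueness then fails. If instead there is a unique sink SCC $C \subsetneq V$, a standard Markov-chain fact---the sub-stochastic matrix $P|_T$ on the transient set $T = V \setminus C$ has spectral radius strictly less than $1$, so $I - P|_T$ is invertible---forces every kernel vector of $\vec{Q}^T$ to vanish on $T$, ruling out a strictly positive $R$. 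Either way, the hypothesis of the lemma is violated.

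The main obstacle is the uniqueness clause: in the forward direction it rests on the simplicity of the Perron eigenvalue for irreducible nonnegative matrices, and in the reverse direction on the structure theorem for stationary measures of reducible finite Markov chains (equivalently, that kernel vectors of $\vec{Q}^T$ are supported on unions of sink SCCs). Once these spectral facts are in hand, the remaining translation between the combinatorial statement about $\vec{G}$ and the Markov-chain picture is routine.
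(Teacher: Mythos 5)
Your proof is correct, but it takes a genuinely different route from the paper. The paper argues entirely by an elementary ``flow-counting'' (chip-firing) argument: if a kernel vector of $\vec{Q}^T$ had mixed signs, the set $V^+$ of vertices with positive entries would have strictly positive net outflow (using strong connectivity to guarantee edges leaving $V^+$), forcing a nonzero entry of $\vec{Q}^TR$ on $V^+$; uniqueness follows by taking a mixed-sign combination of two independent kernel vectors, and the converse is handled by applying the same counting to a source strongly connected component (one with no incoming edges), whereas you work with sink components. You instead translate $\vec{Q}^TR=0$ into the stationary-measure equation $\pi^T=\pi^TP$ for $P=\vec{\mathcal{D}}^{-1}\vec{A}$ and invoke Perron--Frobenius for existence, strict positivity, rationality, and simplicity of the eigenvalue, plus the spectral-radius fact for the transient block in the converse. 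Your approach buys a cleaner treatment of the points the paper leaves implicit (existence of a nonzero kernel vector, strict positivity, integrality), at the cost of importing nontrivial spectral machinery and needing outdegrees to be positive for $P$ to be defined (harmless, since in the converse you only use $P|_T$ and the per-vertex kernel equations, and a zero-outdegree vertex is itself a sink SCC). Two small imprecisions worth fixing: in the case of two or more sink SCCs with a nonempty transient set, what fails is existence of a strictly positive kernel vector rather than uniqueness (your zero-extended vectors are not in $\N^{n+1}$; if a positive $R$ does exist one should perturb it by a small multiple of one of them and clear denominators to exhibit a second positive integer solution), and in the forward direction the degenerate one-vertex graph should be dispatched separately since $P$ is then undefined. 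Neither affects the validity of the overall argument.
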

\begin{proof}
{
Suppose $\vec{G}$ is strongly connected. For the sake of contradiction suppose there exists $R \not \geq 0$ such that  ${\vec{Q}}^TR=0$.  Let $V^+$ be the set of vertices of $\vec{G}$ such that $R(v) > 0$ for all $v \in V^+$. Let $D=\vec{Q}^TR$.  Since the net amount of chips leaving $V^+$ is positive,  there must exist some $v \in V^+$ such that $D(v)<0$, a contradiction. Now assume there exist two linearly independent firing strategies $R_1$ and $R_2$ then it is  easy to see that there exists a linear combination of $R_1$ and $R_2$, say $R$, such that $R \not \geq 0$. This proves the uniqueness. Note that we can take $R$ to be an integral vector.

Conversely, suppose $\vec{G}$ is not strongly connected. Let $V_1, \dots, V_t$ be the decomposition of vertices of $\vec{G}$ into maximal strongly connected components. Without loss of generality, let $V_1$ be a set of vertices such that there exists no edges from $u$ to $v$ where $u \in V_i$, $2 \leq i \leq t$ and $v \in V_1$. As above there exists $v \in V_1$ such that ${\vec{Q}}^TR(v)<0$, a contradiction.
}
\end{proof}
\subsubsection{\label{reduceddiv_sec}Reduced Divisors}
Let $f, f' \in \Bbb Z^{n+1}$ be firing strategies. We define an equivalence relation $\approx$ on $\Bbb Z^{n+1}$ by declaring $f \approx f'$ if $\vec{Q}^T(f-f')=\zerov$.
For any set $S \subseteq V(\vec{G})$, the {\it characteristic vector of} $S$, denoted by $\chi_{S}$, is the vector $\sum_{v_i \in S}e_i$.
We say a vector $f \in \Bbb Z^{n+1}$ is a {\it natural} firing strategy if $f \leq R$, and $f \not \leq {\zerov}$. We say a nonzero vector $f \in \Bbb Z^{n+1}$ is a {\it valid} firing strategy with respect to $v_0$ if $f(v_0)=0$, and ${\zerov} \leq f \leq R$.
The following lemma is an immediate consequence of Lemma~\ref{lem:leftkernel}.
\begin{lemma}
\label{natural_lemma}
Let $f \in \Bbb Z^{n+1}$ be a nonzero firing strategy then there exists a unique $f' \in \Bbb Z^{n+1}$ such that $f \approx f'$ and $f'$ is a natural firing strategy.
\end{lemma}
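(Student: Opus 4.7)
The plan is to exploit the one-dimensionality of $\ker(\vec{Q}^T)$ given by Lemma~\ref{lem:leftkernel}: taking $R$ to be the primitive integer generator of this kernel, the relation $f \approx f'$ becomes $f - f' \in \Z R$, i.e.\ $f' = f - kR$ for a unique $k \in \Z$. Thus finding the natural representative in the $\approx$-class of $f$ reduces to choosing a single integer parameter $k$.

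Write $f'_k := f - kR$ and set $M := \max_{0 \leq i \leq n} f(v_i)/r_i$. The inequality $f'_k \leq R$ unfolds coordinatewise to $f(v_i) \leq (k+1)r_i$ for every $i$, equivalently $k \geq M - 1$. The condition $f'_k \not\leq \zerov$ asks for some $j$ with $f(v_j) > k r_j$, equivalently $k < M$. The two bounds confine $k$ to the half-open interval $[M-1, M)$, which always contains exactly one integer, namely $k^* = \lceil M \rceil - 1$ (one checks this works whether or not $M \in \Z$). Setting $f' := f'_{k^*}$ simultaneously yields existence and uniqueness.

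The only thing requiring care is the normalization of $R$: Lemma~\ref{lem:leftkernel} produces $R$ only up to a real scalar, and one must take it to be the primitive integer vector in $\N^{n+1}$ so that $\ker(\vec{Q}^T) \cap \Z^{n+1}$ coincides with $\Z R$. Otherwise the integer $k$ above would only be determined modulo the index of $\Z R$ in $\ker(\vec{Q}^T) \cap \Z^{n+1}$, and uniqueness would fail. Once this convention is in place, the argument is elementary one-variable arithmetic and I anticipate no serious obstacle.
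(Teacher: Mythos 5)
Your proposal is correct and is essentially the argument the paper intends: the paper offers no written proof, simply declaring the lemma an immediate consequence of Lemma~\ref{lem:leftkernel}, and your one-parameter computation ($f'=f-kR$ with $k$ forced into the unit half-open interval $[M-1,M)$) is precisely the omitted "immediate" step. Your caveat about taking $R$ primitive is also consistent with the paper's implicit convention (it takes $R$ integral in Lemma~\ref{lem:leftkernel} and relies on primitivity later, e.g.\ in Lemma~\ref{r_0_distinct_v_0_lem} and the $\gcd=1$ assumption for arithmetical graphs), so there is nothing to fix.
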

\begin{definition}
{
\label{reduced_def} Let $\vec{G}$ be a directed graph. We call a divisor $D$ $v_0$-reduced if the following two conditions hold:
\begin{enumerate}
\item[(i)] for all $v \in V(\vec{G})\setminus  \{v_0\}, D(v)\geq 0$,
\item[(ii)] for every valid firing $f$ with respect to $v_0$, there exists a vertex $v \in V(\vec{G}) \setminus \{v_0\}$ such that $(D-\vec{Q}^Tf)(v) <0$.
\end{enumerate}
}
\end{definition}
The following remark immediately follows from Definition~\ref{reduced_def}.
\begin{remark}
\label{reduced_rem}
If $D' \sim D$ is a $v_0$-reduced divisor then for all $k \in \Z$, $D'+k\chi_{\{v_0\}}$ is a $v_0$-reduced divisor and $D'+k\chi_{\{v_0\}} \sim D+k\chi_{\{v_0\}}$.
\end{remark}
\begin{lemma}
{
\label{firing_reduced_lemma}Let $D$ be a $v_0$-reduced divisor and let $f$ be a firing strategy such that $f(v_0) \leq 0$ and $f(v) >0$ for
some vertex $v \in V(\vec{G}) \setminus \{v_0\}$.  Then there exists $v \in V(\vec{G})\setminus \{v_0\}$ such that $(D-\vec{Q}^Tf)(v)<0$.
}
\end{lemma}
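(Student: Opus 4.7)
The plan is to rewrite $f$ as a valid firing strategy for $v_0$ plus an ``error'' with favorable sign, apply the $v_0$-reduced hypothesis to the valid part, and then absorb the error. First, using $\vec{Q}^T R = \zerov$ (Lemma~\ref{lem:leftkernel}) and Lemma~\ref{natural_lemma}, I replace $f$ by its natural representative $f + kR$; this does not affect $D - \vec{Q}^T f$. The admissible integer $k$ coming from Lemma~\ref{natural_lemma} satisfies $-M < k \leq 1-M$ where $M = \max_v f(v)/R(v)$; since some $f(v) > 0$ and $R > \zerov$ we have $M > 0$, forcing $k \leq 0$, and hence the normalized firing still satisfies $f(v_0) \leq 0$ while the coordinate $v^* \neq v_0$ where $M$ is attained continues to satisfy $f(v^*) > 0$. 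So I may assume outright that $f \leq R$ in addition to the given hypotheses.

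I would then decompose $f = f^+ - g$ with $f^+ = \max(f,\zerov)$ and $g = \max(-f,\zerov)$ having disjoint support. The standing hypotheses translate into $f^+(v_0) = 0$, $\zerov \leq f^+ \leq R$, and $f^+ \neq \zerov$, so $f^+$ is a valid firing strategy for $v_0$; Definition~\ref{reduced_def} applied to $f^+$ then produces a vertex $v \neq v_0$ with $(D - \vec{Q}^T f^+)(v) < 0$. The pivotal observation is that such a $v$ must itself be in the support of $f^+$: for any $u \neq v_0$ with $f^+(u) = 0$,
$$(\vec{Q}^T f^+)(u) \;=\; \deg^+(u)\, f^+(u) \;-\; \sum_{w} \vec{A}_{w,u}\, f^+(w) \;=\; -\sum_{w} \vec{A}_{w,u}\, f^+(w) \;\leq\; 0,$$
which combined with $D(u) \geq 0$ (from $v_0$-reducedness) gives $(D - \vec{Q}^T f^+)(u) \geq 0$, ruling out such $u$. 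Hence $f^+(v) > 0$ and, by disjointness of supports, $g(v) = 0$.

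With $g(v) = 0$, the analogous computation for $g$ yields $(\vec{Q}^T g)(v) = -\sum_w \vec{A}_{w,v}\, g(w) \leq 0$, and assembling the pieces produces
$$(D - \vec{Q}^T f)(v) \;=\; (D - \vec{Q}^T f^+)(v) \;+\; (\vec{Q}^T g)(v) \;<\; 0,$$
as required. The main obstacle is exactly the pivotal step that the vertex $v$ furnished by $v_0$-reducedness for the valid firing $f^+$ automatically lies in $\mathrm{supp}(f^+)$; everything else is bookkeeping. This observation is the directed-graph analogue of the familiar principle that firing a set $S$ can only push vertices inside $S$ into debt, and it is precisely what allows the negative part of $f$ to be absorbed without requiring any extra slack.
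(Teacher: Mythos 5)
Your proof is correct and follows essentially the same route as the paper's: pass to the natural representative of $f$, note that its positive part is a valid firing strategy with respect to $v_0$, invoke Definition~\ref{reduced_def}(ii), and then absorb the negative part of $f$. Your explicit observation that the debt vertex must lie in the support of $f^+$ (since any other vertex $u\neq v_0$ can only gain chips under $\vec{Q}^Tf^+$ and starts with $D(u)\geq \zerov$ there) is precisely the fact the paper's final chain of inequalities uses implicitly, so it is a clarification of the same argument rather than a different approach.
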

\begin{proof}
{
Lemma~\ref{natural_lemma} implies that there exists a natural firing strategy $f' \approx f$ with $f'(v_0)\leq f(v_0)=0$. Suppose $f^+$ and $f^-$ are the positive and negative part of $f'$. It is easy to see that $f^+$ is a valid firing strategy with respect to $v_0$. Hence there exists a vertex $v \in  V(\vec{G}) \setminus \{ v_0\}$ such that $(D-\vec{Q}^Tf^+)(v)<0$. Therefore,
$$(D-\vec{Q}^Tf)(v)= (D-\vec{Q}^Tf')(v)= (D-\vec{Q}^T f^+ -\vec{Q}^T f^-)(v) \leq (D- \vec{Q}^T f^+)(v) <0.$$
}
\end{proof}
\begin{lemma}
{
\label{reduce_exist_lemma} Let $\vec{G}$ be a directed graph and let $D$ be a divisor.  Then there exists a divisor $D' \sim D$ such that $D'$ is
$v_0$-reduced.
}
\end{lemma}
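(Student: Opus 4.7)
The plan is to adapt Baker--Norine's minimization argument from the undirected setting, using strong connectivity of $\vec{G}$ to supply two key ingredients. The argument is driven by the \emph{reduced Laplacian} $M := (\vec{Q}_{i,j})_{1 \leq i,j \leq n}$ obtained from $\vec{Q}$ by deleting the row and column corresponding to $v_0$: strong connectivity of $\vec{G}$, together with the $Z$-matrix sign pattern of $\vec{Q}$ and the identity $\vec{Q}\onev = 0$, forces $M$ to be a nonsingular irreducible $M$-matrix, so that $M^{-1}$ is entrywise positive.

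First I would show that $\mathcal{E}(D) := \{D'' \sim D : D''(v_i) \geq 0 \text{ for all } i \neq 0\}$ is nonempty. Pick a positive integer $c$ that is both divisible by $\det M$ and at least $\max_{i \neq 0}(-D(v_i))$, set $g := c\,(M^T)^{-1}\onev$ (non-negative and integral by the choice of $c$), and take $f := (0,-g_1,\ldots,-g_n)^T \in \Z^{n+1}$. A short computation using $M^T g = c\,\onev$ gives $(D - \vec{Q}^T f)(v_i) = D(v_i) + c \geq 0$ for each $i \neq 0$, so $D - \vec{Q}^T f \in \mathcal{E}(D)$. Next, set $p' := (\det M)\cdot M^{-1}\onev$ (an integer vector with strictly positive entries) and $p := (0,p'_1,\ldots,p'_n)^T \in \Z^{n+1}$, so that $(\vec{Q} p)(v_i) = \det M$ for each $i \neq 0$; define the potential $\Phi(D'') := p\cdot D''$. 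Since $p_0 = 0$ and $p_i > 0$ for $i \neq 0$, $\Phi$ is a non-negative integer-valued function on $\mathcal{E}(D)$.

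Let $D^* \in \mathcal{E}(D)$ be chosen to minimize $\Phi(D^*)$; this minimum is attained because $\Phi$ takes only non-negative integer values on the nonempty set $\mathcal{E}(D)$. I claim $D^*$ is $v_0$-reduced. Condition (i) of Definition~\ref{reduced_def} holds by construction, while for any firing strategy $f'$ with $f'(v_0) = 0$ one computes
\[
\Phi(D^* - \vec{Q}^T f') - \Phi(D^*) \;=\; -(\vec{Q} p)\cdot f' \;=\; -(\det M)\sum_{i\neq 0} f'(v_i),
\]
using $p_0 = 0$ and $(\vec{Q} p)(v_i) = \det M$ for $i \neq 0$. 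If $D^*$ failed condition (ii) of Definition~\ref{reduced_def}, some valid firing strategy $f'$ (nonzero, non-negative, with $f'(v_0) = 0$) would place $D^* - \vec{Q}^T f' \in \mathcal{E}(D)$; but then the displayed difference is strictly negative, contradicting the minimality of $\Phi(D^*)$.

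The main obstacle is arriving at a potential that strictly decreases under every valid firing preserving effectiveness. In the undirected case, Baker--Norine's simpler choice $\Phi(D) = -D(v_0)$ works, because symmetry of the Laplacian guarantees that every nontrivial valid firing subset has a neighbor of $v_0$ on its boundary; in the directed case a valid firing whose support avoids the in-neighbors of $v_0$ leaves $D(v_0)$ unchanged, so one needs a ``weighted distance to $v_0$'' potential, which is exactly what $p$ encodes. Establishing the $M$-matrix property of $M$ is the conceptual heart of the construction; it is standard but non-trivial, and it underlies both the nonemptiness of $\mathcal{E}(D)$ and the existence of the required $p$.
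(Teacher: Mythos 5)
Your argument is correct in substance, but it takes a genuinely different route from the paper. The paper's proof is purely combinatorial and two-staged: first it lifts every vertex other than $v_0$ out of debt by an explicit BFS-layered sequence of moves (firing the complement of the far layers, working inward from the vertices farthest from $v_0$), and then it greedily applies valid firings as long as possible, arguing termination qualitatively from strong connectivity ($v_0$ must eventually stop receiving chips, hence so must its in-neighbors, and so on). You replace both stages by linear algebra on the reduced Laplacian $M$: nonemptiness of $\mathcal{E}(D)$ by solving $M^{T}g=c\onev$ integrally, and the existence of a reduced representative by minimizing the monovariant $\Phi=p\cdot(\cdot)$ with $p=(0,\mathrm{adj}(M)\onev)$, which strictly decreases (by $\det M\sum_{i\neq 0}f'(v_i)>0$) under any valid firing that keeps the divisor nonnegative off $v_0$. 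What each approach buys: the paper's argument is elementary, effective as an algorithm, and its termination idea is reused later (e.g.\ in the sandpile section), while yours gives a clean well-ordering/potential-function proof with quantitative control, at the cost of invoking the standard but nontrivial fact that the reduced Laplacian of a strongly connected digraph is nonsingular with nonnegative inverse (directed matrix-tree theorem or $M$-matrix theory).

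One inaccuracy in your justification should be repaired, though it does not sink the proof: strong connectivity of $\vec{G}$ does \emph{not} make $M$ irreducible, and $M^{-1}$ need not be entrywise positive. For the directed triangle $v_0\to v_1\to v_2\to v_0$ one gets $M=\left(\begin{smallmatrix}1&-1\\ 0&1\end{smallmatrix}\right)$, whose inverse has a zero entry and whose sparsity digraph ($v_1\to v_2$ only) is not strongly connected. Fortunately your computations only use that $M$ is nonsingular with $\det M>0$ and $M^{-1}\geq 0$ (so that $g$ is integral and $p'=\mathrm{adj}(M)\onev$ is nonnegative, indeed strictly positive since no row of $M^{-1}$ vanishes), and these facts do hold for strongly connected $\vec{G}$: every vertex reaches $v_0$, so $M$ is a weakly chained diagonally dominant $Z$-matrix, hence a nonsingular $M$-matrix (equivalently, its determinant counts arborescences rooted at $v_0$). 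State and use this weaker property and the argument is complete.
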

\begin{proof}
{
The proof that we present here is similar to the proof presented by Baker and Norine~\cite{BN07}(\S 3.1). The process of obtaining a $v_0$-reduced divisor $D' \sim D$ has two steps: first we bring every $v \in V(\vec{G}) \setminus \{v_0\}$ out of debt, so that it satisfies the first condition of Definition~\ref{reduced_def}, and then we ``reduce'' the divisor with respect to $v_0$, in order to satisfy the second condition of Definition~\ref{reduced_def}. For performing the first step, define $d(v)$, for all $v \in V(\vec{G}) \setminus \{v_0\}$, to be the length of the shortest directed path from $v_0$ to $v$. Let $d=\max_{v \in V(\vec{G}) \setminus \{v_0\}} d(v)$. For all $1 \leq i \leq d$, define $A_i=\{v \in V(\vec{G}): d(v)=i\}$. Now we bring the $A_i$'s out of debt consecutively, starting at $A_d$. We recursively define sequences of integers $b_i$ and divisors $D_i$ as follows.  Let $b_d=\max\left(\{-D(v): v \in A_d, D(v) \leq 0\} \cup \{0\}\right)$. Define $D_d=D-\vec{Q}^Tf_d$ where $f_d$ is the all zero vector except $f_d(v_j)=b_d$ if $v_j \not \in A_d$. It is easy to see that $D_d(v_j) \geq 0$ for all $v_j \in A_d$. Now suppose $1 \leq i \leq d-1$, and define $b_i=\max\left(\{-D(v): v \in A_i, D_{i+1}(v) \leq 0\} \cup \{0\}\right)$. Define $D_i=D_{i+1}-\vec{Q}^Tf_i$ where $f_i$ is the all zero vector except $f_i(v_j)=b_i$ if $v_j \not \in \bigcup_{k=i}^d A_k$. It is easy to see that $D_i(v_j) \geq 0$ for all $v_j \in A_i$ and $D_i(v_j)=D_{i+1}(v_j)$ for all $v_j \in \bigcup_{k=i+1}^d A_k$. Since $d$ is a finite number and the $b_i$'s are bounded,  the above procedure  terminates. It is easy to verify that $D_1 \sim D$ is a divisor such that no vertex other than $v_0$ is in debt. This completes the description of the first step.

Now we are going to explain the second step. Let $D'=D_1$ be the divisor obtained from the first step. While there exists a valid firing strategy $f$ with respect to $v_0$ such that $(D'-\vec{Q}^Tf)(v) \geq 0$ for all $v \in V(\vec{G}) \setminus \{v_0\}$, replace $D'$ by $D'-\vec{Q}^Tf$. If we show that the procedure terminates, it is obvious that $D'$ is a $v_0$-reduced divisor. Since $f(v_0)=0$ for any valid firing strategy with respect to $v_0$, the vertex $v_0$ must stop receiving money at some point. At this point none of its neighbors fires, so they must eventually stop receiving money. By iterating this argument we see that, since $v_0$ is reachable from every vertex, each vertex must stop receiving money at some point. Hence, the above procedure terminates at a $v_0$-reduced divisor.
}
\end{proof}

\begin{corollary}
{
\label{positive_divisor_reduction_cor}
Let $D$ be a divisor satisfying the property (i) in Definition~\ref{reduced_def}.  Then there exists a sequence of valid firings $f_1, \dots, f_k$ with respect to $v_0$ such that $D'=D-\vec{Q}^T(\sum_{i=1}^k f_i)$ is $v_0$-reduced.
}
\end{corollary}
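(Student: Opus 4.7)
My plan is to observe that this corollary is essentially ``the second half'' of the proof of Lemma~\ref{reduce_exist_lemma} and then to formalize exactly that. Since $D$ already satisfies condition (i) of Definition~\ref{reduced_def}, the first ``bring vertices out of debt'' stage in the proof of Lemma~\ref{reduce_exist_lemma} is unnecessary; I only need the iterative reduction stage, which uses valid firings with respect to $v_0$ exclusively. Concretely, I would set $D_0 = D$ and, as long as $D_i$ is not $v_0$-reduced, use condition (ii) of Definition~\ref{reduced_def} to produce a valid firing $f_{i+1}$ with respect to $v_0$ such that $(D_i - \vec{Q}^T f_{i+1})(v) \geq 0$ for every $v \in V(\vec{G}) \setminus \{v_0\}$, and set $D_{i+1} = D_i - \vec{Q}^T f_{i+1}$. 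By construction each $D_{i+1}$ still satisfies (i), and the cumulative firing strategy $\sum_{j=1}^i f_j$ is a non-negative integer vector vanishing at $v_0$.

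The key step is to show termination. Here I would reproduce the argument given at the end of the proof of Lemma~\ref{reduce_exist_lemma}: because each $f_{i+1}$ satisfies $f_{i+1}(v_0) = 0$, the vertex $v_0$ never fires but can receive chips from its in-neighbors at each step. Its value $D_i(v_0)$ is non-decreasing in $i$ (every firing of an in-neighbor of $v_0$ increases it, and no firing decreases it), and yet it is bounded above by $\deg_R(D)/r_0$ since the total $R$-degree is preserved and all other coordinates remain non-negative. Hence after finitely many steps the in-neighbors of $v_0$ must stop firing; propagating this observation along directed paths and using strong connectedness of $\vec{G}$ (via Lemma~\ref{lem:leftkernel}), each vertex must stop firing after finitely many steps, so the procedure terminates.

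When the procedure terminates, the terminal divisor $D'$ satisfies (i) by construction and satisfies (ii) because no further valid firing $f$ with respect to $v_0$ keeps all non-$v_0$ coordinates non-negative. Therefore $D'$ is $v_0$-reduced, and writing $D - D' = \vec{Q}^T\bigl(\sum_{j=1}^k f_j\bigr)$ exhibits the required sequence of valid firings.

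The only real obstacle is the termination argument, since the firings are not a priori bounded individually (a valid firing can have entries up to those of $R$). The essential idea is that the total ``flow'' into $v_0$ is bounded because $v_0$'s coordinate is bounded, and then strong connectedness forces the bound to propagate to all vertices; this is exactly the argument already sketched in the proof of Lemma~\ref{reduce_exist_lemma}, so I would simply import it verbatim rather than reprove it.
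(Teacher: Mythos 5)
Your proposal is essentially the paper's own argument: the paper treats this corollary as an immediate consequence of the second stage of the proof of Lemma~\ref{reduce_exist_lemma}, which is exactly the iterative reduction-plus-termination argument you reproduce. One detail in your termination bound is wrong, however: in the \emph{row} chip-firing game the $R$-degree is \emph{not} preserved. Divisor changes have the form $\vec{Q}^T f$, and $R\cdot(\vec{Q}^T f)=(\vec{Q}R)^T f$, which vanishes only if $R$ lies in the right kernel of $\vec{Q}$; Lemma~\ref{lem:leftkernel} only gives $\vec{Q}^T R=\zerov$ (the left kernel), which governs trivial firing strategies, not conservation of $\deg_R$. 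The conserved quantity for the row game is the ordinary chip count $\deg_{\onev}$, since $\vec{Q}\,\onev=\zerov$; so the correct bound is $D_i(v_0)\leq \deg_{\onev}(D)$ (all other coordinates stay nonnegative throughout), which serves the same purpose and makes the rest of your argument, including the propagation via strong connectedness, go through exactly as in the paper.
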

%
\begin{lemma}
{
\label{r_0_distinct_v_0_lem}
For any divisor $D$, there exist exactly $r_0$ distinct $v_0$-reduced divisors equivalent to $D$.
}
\end{lemma}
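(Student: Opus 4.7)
Fix any $D_1\sim D$ that is $v_0$-reduced, which exists by Lemma~\ref{reduce_exist_lemma}. The plan is to set up a bijection between the set of $v_0$-reduced divisors equivalent to $D$ and $\{1,2,\dots,r_0\}$ by sending $D_2$ to $f(v_0)$, where $f$ is the unique natural firing strategy with $D_1-D_2=\vec{Q}^T f$ provided by Lemma~\ref{natural_lemma} (taking $f=R$ if $D_2=D_1$, so that $D_1$ itself maps to $r_0$). The key fact I will repeatedly use is that by Lemma~\ref{lem:leftkernel} the integer kernel of $\vec{Q}^T$ is precisely $\Z R$.

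For the lower bound I would exhibit $r_0$ distinct reduced divisors explicitly. Set $D_1^{(k)}=D_1-k\vec{Q}^T e_0$ for $k=0,1,\dots,r_0-1$. For every $v\neq v_0$ one has $D_1^{(k)}(v)=D_1(v)+k\vec{A}_{0,v}\geq 0$, so Corollary~\ref{positive_divisor_reduction_cor} yields a $v_0$-reduced $D'^{(k)}\sim D$ reached from $D_1^{(k)}$ by a sum $h_k$ of valid firings (in particular $h_k(v_0)=0$). If $D'^{(i)}=D'^{(j)}$ with $i<j$, then $(j-i)e_0+(h_j-h_i)$ lies in $\ker(\vec{Q}^T)=\Z R$, and reading off the $v_0$-coordinate gives $j-i=cr_0$ for some integer $c$, impossible for $0<j-i<r_0$.

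The main obstacle is the upper bound, which boils down to classifying the natural $f$ that can arise. Given $D_2\neq D_1$, I would apply Lemma~\ref{firing_reduced_lemma} twice: to $(D_1,f)$ to rule out $f(v_0)\leq 0$ together with some $f(v)>0$ for $v\neq v_0$ (else $D_2=D_1-\vec{Q}^T f$ has a negative entry off $v_0$), and to $(D_2,-f)$ to rule out $f(v_0)\geq 0$ together with some $f(v)<0$ (else $D_1$ has one). Combined with $f\leq R$ and $f\not\leq\zerov$ from naturality, a short case split forces $\zerov\leq f\leq R$ with $f(v_0)\geq 1$. To exclude $f(v_0)=r_0$, I observe that then $R-f$ would be a nonzero valid firing strategy with respect to $v_0$ (nonzero because $f=R$ would give $D_2=D_1$ via $\vec{Q}^T R=\zerov$), and the computation $D_2-\vec{Q}^T(R-f)=D_2+\vec{Q}^T f=D_1$ shows that this valid firing of $D_2$ leaves all $v\neq v_0$ nonnegative, contradicting Definition~\ref{reduced_def}(ii) for $D_2$. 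Hence $f(v_0)\in\{1,\dots,r_0-1\}$.

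Finally, injectivity of the map on the non-$D_1$ part follows by applying the same classification to a hypothetical difference: if distinct $D_2,D_2'$ both map to $k$, letting $h$ be the natural firing strategy with $D_2-D_2'=\vec{Q}^T h$, the classification (with $D_2$, $D_2'$ playing the roles of $D_1$, $D_2$) gives $h(v_0)\in\{1,\dots,r_0-1\}$, while on the other hand $h\approx f'-f$ forces $h=(f'-f)+cR$ for some $c\in\Z$, so $h(v_0)=cr_0$, and no integer $c$ makes these compatible. Combined with the lower bound this proves the bijection, so there are exactly $r_0$ $v_0$-reduced divisors equivalent to $D$.
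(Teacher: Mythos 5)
Your proof is correct, and its ingredients are the same as the paper's (Lemmas~\ref{lem:leftkernel}, \ref{natural_lemma}, \ref{firing_reduced_lemma}, Corollary~\ref{positive_divisor_reduction_cor}), but the upper bound is organized differently. The lower bound is essentially the paper's argument: fire $v_0$ repeatedly, reduce by valid firings, and get distinctness from the fact that the integer kernel of $\vec{Q}^T$ is $\Z R$ (both you and the paper implicitly take $R$ primitive, which is needed for the statement itself, so this is not a gap). For the upper bound the paper applies the pigeonhole principle to the residues $f(v_0) \bmod r_0$ of firing strategies realizing the reduced representatives and derives a contradiction from one application of Lemma~\ref{firing_reduced_lemma}; you instead fully classify the natural firing strategy $f$ connecting any two distinct reduced representatives, proving $\zerov \leq f \leq R$ and $1 \leq f(v_0) \leq r_0-1$ (the exclusion of $f(v_0)=r_0$ coming directly from Definition~\ref{reduced_def}(ii) applied to the valid strategy $R-f$), and then read off injectivity of $D_2 \mapsto f(v_0)$. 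Your route costs an extra case analysis that the paper's mod-$r_0$ pigeonhole avoids (it never needs to separate $f(v_0)=0$ from $f(v_0)=r_0$), but it buys a sharper conclusion: the reduced representatives are explicitly parametrized by the value $f(v_0)$ of the natural firing strategy from a fixed one, which is essentially the content of the paper's Corollary~\ref{unique_firing_reduced_cor}, obtained here in the same stroke.
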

\begin{proof}
{
First, we show that there exist at most $r_0$ distinct reduced divisors equivalent to $D$.
Suppose not, so by the pigeonhole principle, there exist two distinct
reduced divisors, $D'=D-\vec{Q}^Tf'$ and $D''=D-\vec{Q}^Tf''$
with $f'(v_0) \equiv f''(v_0) \hbox{ (mod) } r_0$. Pick $k \in \Z$
so that $(f'-f''-kR)(v_0)=0$ and let $f^*=f'-f''-kR$. By our
assumption $D' \neq D''$ and so $\vec{Q}^T(f'-f'') \neq 0$. Hence by Lemma~\ref{lem:leftkernel}, either $f^*$ or
$-f^*$ satisfies the assumptions of Lemma~\ref{firing_reduced_lemma}. Without loss of
generality, suppose $f^*$ satisfies the assumption of Lemma~\ref{firing_reduced_lemma}. But $D'=D''-\vec{Q}^Tf^*$ is a $v_0$-reduced divisor, contradicting Definition~\ref{reduced_def}(i).

Now, we show that there exist at least $r_0$ distinct reduced divisors equivalent to $D$.
Lemma~\ref{reduce_exist_lemma} implies that there exists at least one $v_0$-reduced divisor equivalent to $D$, so if $r_0 =1$ we are
done. Therefore for the rest of the proof we will assume that $r_0
> 1$. Take a $v_0$-reduced divisor $D'\sim
D$ and observe that $D''=D'-\vec{Q}^T(\chi_{\{v_0\}})$ satisfies
the condition (i) of Definition~\ref{reduced_def}. Hence Corollary~\ref{positive_divisor_reduction_cor} implies that $D''$
can be reduced without firing $v_0$ to achieve a new reduced
divisor from $D'$. We can acquire $r_0$  $v_0$-reduced divisors equivalent to $D$ by
repeated application of this method. We claim that all of the $v_0$-reduced divisors obtained are distinct. Suppose there exist $0\leq i < j <r_0$ and firing strategies $f'$ and $f''$ such that $f'(v_0)=i$, $f''(v_0)=j$, and  $D^*=D'-\vec{Q}^Tf'=D'-\vec{Q}^Tf''$ is $v_0$-reduced. This implies that $\vec{Q}^T(f''-f')={\zerov}$ but  $0<(f''-f')(v_0)<r_0$, contradicting the statement of Lemma~\ref{lem:leftkernel}.
}
\end{proof}
\begin{corollary}
{
\label{unique_firing_reduced_cor}
Let $\vec{G}$ be a directed graph and let $D$ be a divisor. There exist $r_0$ $v_0$-reduced divisors $D_i=D-\vec{Q}^Tf_i$ where $f_i(v_0)=i$ for all $0 \leq i \leq r_0-1$.
}
\end{corollary}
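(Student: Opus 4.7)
My plan is to refine the construction already used in the proof of Lemma~\ref{r_0_distinct_v_0_lem}, being careful to control the $v_0$-coordinate of the firing strategy. First, I would invoke Lemma~\ref{reduce_exist_lemma} to obtain some $v_0$-reduced divisor $D^{*} = D - \vec{Q}^{T} g^{*}$ equivalent to $D$. Then, starting from $D^{*}$, I iterate the two-step procedure appearing in the proof of Lemma~\ref{r_0_distinct_v_0_lem}: at each stage, replace the current $v_0$-reduced divisor $D'$ by $D'' = D' - \vec{Q}^{T} \chi_{\{v_{0}\}}$ (firing $v_0$ once, which preserves property (i) of Definition~\ref{reduced_def} since it can only add chips to the other vertices), and then apply Corollary~\ref{positive_divisor_reduction_cor} to reduce $D''$ to a new $v_0$-reduced divisor using only valid firings $h_{j}$ with respect to $v_0$ (so $h_{j}(v_{0}) = 0$).

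After $r_{0} - 1$ iterations, this produces $v_0$-reduced divisors $D^{(0)} = D^{*}, D^{(1)}, \ldots, D^{(r_{0}-1)}$ together with firing strategies $g^{(k)}$ satisfying $D^{(k)} = D - \vec{Q}^{T} g^{(k)}$ and $g^{(k)}(v_{0}) = g^{*}(v_{0}) + k$. Thus the $v_0$-coordinates of $g^{(0)}, \ldots, g^{(r_{0}-1)}$ are $r_{0}$ consecutive integers, and therefore represent every residue class modulo $r_{0}$ exactly once.

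To deliver the corollary's specific form, for each $i \in \{0, 1, \ldots, r_{0} - 1\}$ I pick the unique $k = k(i)$ with $g^{(k)}(v_{0}) \equiv i \pmod{r_{0}}$, and define
\[
f_{i} = g^{(k)} - \frac{g^{(k)}(v_{0}) - i}{r_{0}} R.
\]
By Lemma~\ref{lem:leftkernel} we have $\vec{Q}^{T} R = 0$, so $D - \vec{Q}^{T} f_{i} = D^{(k)}$ is $v_0$-reduced; by construction $f_{i}(v_{0}) = i$. Setting $D_{i} = D - \vec{Q}^{T} f_{i}$ yields the desired family.

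Distinctness of $D_{0}, \ldots, D_{r_{0}-1}$ (not required by the statement, but useful) follows at once: if $D_{i} = D_{j}$ then $f_{i} - f_{j}$ lies in the kernel of $\vec{Q}^{T}$, hence equals $cR$ for some $c \in \R$ by Lemma~\ref{lem:leftkernel}, and comparing $v_0$-components gives $i - j = c r_{0}$, which forces $i = j$ since $|i - j| < r_{0}$. The only delicate point in the argument is the bookkeeping of $g^{(k)}(v_{0})$ across iterations, and this is immediate because each iteration increments it by exactly $1$ (the firing $\chi_{\{v_{0}\}}$ contributes $1$ while the subsequent valid firings contribute $0$); the freedom to add integer multiples of $R$ then pins down the $v_0$-coordinate to any desired integer representative.
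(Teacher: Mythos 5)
Your proposal is correct and follows essentially the same route as the paper: the corollary is read off from the construction in the second half of the proof of Lemma~\ref{r_0_distinct_v_0_lem} (fire $v_0$ once, re-reduce via Corollary~\ref{positive_divisor_reduction_cor}, repeat), and you have simply made explicit the bookkeeping of the $v_0$-coordinate and the harmless adjustment by integer multiples of $R$ using $\vec{Q}^{T}R=\zerov$ to pin $f_i(v_0)=i$ exactly.
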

\begin{lemma}
{
\label{extreme_reduced_effective lem}
Let $\vec{G}$ be a directed graph and let $D$ be a divisor. Then
\begin{itemize}
\item[(i)] $D$ is equivalent to an effective divisor if and only if there exists a $v_0$-reduced divisor $D' \sim D$ such that $D'$ is effective;
\item[(ii)] Suppose $D$ is not equivalent to an effective divisor.  Then $D$ is an extreme divisor if and only if for any $v \in V(\vec{G})$, there exists a $v$-reduced divisor $D' \sim D$ such that $D'(v)=-1$.
\end{itemize}
}
\end{lemma}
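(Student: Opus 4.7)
My approach is to dispatch part (i) as a warm-up and then leverage it together with a geometric reformulation of extreme divisors to handle part (ii). For part (i), the ``if'' direction is immediate. For the ``only if'' direction, suppose $D \sim E$ with $E$ effective. Then $E$ trivially satisfies condition~(i) of Definition~\ref{reduced_def}, so Corollary~\ref{positive_divisor_reduction_cor} produces a sequence of valid firings $f_1, \dots, f_k$ with respect to $v_0$ such that $D' = E - \vec{Q}^T(\sum_i f_i)$ is $v_0$-reduced and has $D'(v) \geq 0$ for every $v \neq v_0$. It remains only to check $D'(v_0) \geq 0$. Because every valid firing has $f_i(v_0) = 0$, the vertex $v_0$ never fires during the reduction and only receives chips from in-neighbors that do fire; hence $D'(v_0) \geq E(v_0) \geq 0$, so $D'$ is the desired effective $v_0$-reduced divisor.

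For part (ii), my first step is to translate ``extreme'' into a concrete statement about neighbors of $D$. Because $N(D) \setminus \{D\} = \{D \pm e_i : 0 \leq i \leq n\}$ and $\deg_R(D + e_i) = \deg_R(D) + r_i > \deg_R(D)$, the condition $D \in Ext(\Sigma(\L))$ reduces to requiring $D + e_v \notin \Sigma(\L)$ for every vertex $v$; equivalently, each $D + e_v$ is equivalent to an effective divisor. (The divisors $D - e_v$ automatically have strictly smaller degree and so impose no constraint on extremality.) With this reformulation in hand, the rest of the argument is essentially an application of part (i).

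For the forward direction of (ii), fix $v \in V(\vec{G})$. Since $D + e_v$ is equivalent to an effective divisor, part (i) applied with $v$ in the role of $v_0$ yields an effective $v$-reduced divisor $E_v \sim D + e_v$. The crucial observation is that $E_v(v) = 0$: if $E_v(v) \geq 1$, then $E_v - e_v$ would be an effective divisor equivalent to $D$, contradicting our hypothesis. Setting $D' = E_v - e_v$ then gives a divisor equivalent to $D$ with $D'(v) = -1$ and $D'(w) = E_v(w) \geq 0$ for $w \neq v$; moreover $D'$ is still $v$-reduced by Remark~\ref{reduced_rem}.

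For the backward direction, suppose that for each $v$ we are given a $v$-reduced divisor $D_v \sim D$ with $D_v(v) = -1$. Then $D_v + e_v$ has value $0$ at $v$ and is non-negative elsewhere, hence is effective; since $D + e_v \sim D_v + e_v$, each $D + e_v$ is equivalent to an effective divisor, so lies outside $\Sigma(\L)$. By the reformulation in the previous paragraph, this is precisely the statement that $D \in Ext(\Sigma(\L))$. The two places I expect to require the most care are the use of the non-effectivity of $D$ to force $E_v(v) = 0$ in the forward direction of (ii), and the check that valid firings with respect to $v_0$ cannot push $v_0$ itself into debt in part (i); both are ultimately bookkeeping once the definitions are unpacked.
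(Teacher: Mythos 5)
Your proposal is correct and follows essentially the same route as the paper: part (i) via Corollary~\ref{positive_divisor_reduction_cor} together with the observation that valid firings never decrease the chips at the base vertex, and part (ii) via the characterization of extremality through the neighbors $D+\chi_{\{v\}}$ combined with part (i) and Remark~\ref{reduced_rem}. Your forward direction of (ii) is phrased slightly more directly (producing $D'=E_v-\chi_{\{v\}}$ from an effective $v$-reduced representative of $D+\chi_{\{v\}}$, rather than the paper's contrapositive argument), but the ingredients and structure are the same.
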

\begin{proof}
$(i)$: One direction is obvious. So assume $D$ is equivalent to an effective divisor, call it $D''$. If $D''$ is $v_0$-reduced then we are done.  Otherwise, Corollary~\ref{positive_divisor_reduction_cor} implies that there exists a valid firing strategy $f$ with respect to $v_0$ such that $D''-\vec{Q}^Tf$ is $v_0$-reduced. Since $D''$ is effective and $f$ is valid with respect to $v_0$, $D''-\vec{Q}^Tf$ is effective.

$(ii)$: First assume that $D$ is an extreme divisor. The assertion of part (i) implies that for all $v \in V(D)$, if $D' \sim D$ is a $v$-reduced divisor, $D'(v) \leq -1$. Suppose there exists $v \in V(\vec{G})$ such that for all $v$-reduced divisor $D' \sim D$ we have that $D'(v) <-1$. Then by Remark~\ref{reduced_rem}, for all $v$-reduced divisors $D' \sim D$, $D'+\chi{\{v\}}$ is not effective and it is $v$-reduced. So by part (i), $D+\chi_{\{v\}}$ is not effective, a contradiction.

For proving the other direction, it is enough to show that for all $v \in V(\vec{G})$, $D+\chi_{\{v\}}$ is equivalent to an effective divisor. So let $v$ be a vertex and let $D' \sim D$ be the $v$-reduced divisor such that $D'(v)=-1$. Then $D'+\chi_{\{v\}}$ is effective and so $D+\chi_{\{v\}}$ is also.
\end{proof}
\subsubsection{Dhar's Algorithm \label{subsec:Dhar_alg}}
Dhar~\cite{Dhar90}, while studying the sand pile model, found a simple algorithm for checking whether a given divisor in an undirected graph $G$ is $v_0$-reduced or not. We discuss the directed sandpile model in the next section.  Here we generalize his algorithm so that it applies to an arbitrary directed graph $\vec{G}$. The authors found this generalization independently from Speer~\cite{Spe93}.

The input of the algorithm is a divisor $D$ satisfying the condition (i) of Definition~\ref{reduced_def}. The output of the algorithm is a finite sequence $f_i$ of firing strategies which is decreasing with respect to the $\leq$ relation. The description of the algorithm is as follows.

We construct a sequence of firing strategies $f_i$'s recursively. Set $f_0=R$. For $t \geq 0$, if there exists some $v\in V(\vec{G})\setminus \{v_0\}$ such that
\begin{equation}
\label{dharmain_eq}
(D-\vec{Q}^Tf_t)(v) \leq  -1,
\end{equation}
pick one such vertex $v$ and set
$f_{t+1}=f_t -\chi_{\{v\}}$. If for all $v \in
V(\vec{G})\setminus \{v_0\},  (D-\vec{Q}^Tf_t)(v) \geq 0$ and $f_t(v_0)>0$,
set $f_{t+1}=f_t-\chi_{\{v_0\}}$.  Otherwise the algorithm terminates and the output of the algorithm is the decreasing sequence of $f_i$'s.

We call the above algorithm the {\it generalized Dhar's Algorithm}.

\begin{theorem}
{
\label{dhar_alg_thm}
Let $D$ be a divisor satisfying condition (i) in Definition~\ref{reduced_def}. Then
\begin{enumerate}
\item[(i)]  the divisor $D$ is $v_0$-reduced if and only if the generalized Dhar's Algorithm terminates at $f_{{\onev} \cdot R}={\zerov}$.
\item[(ii)] if $D$ is a $v_0$-reduced divisor then for each $0 \leq t \leq {\onev} \cdot R -1$ such that $f_{t+1}=f_t-\chi _{\{v_0\}}$,
$D-\vec{Q}^Tf_t$ is a $v_0$-reduced divisor.
\end{enumerate}
}
\end{theorem}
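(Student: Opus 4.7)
The plan is to organize the argument around a monotonicity invariant comparing the firing strategies $f_t$ produced by the algorithm against arbitrary candidate firings, strengthened just enough to handle part (ii).

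For part (i), the $(\Rightarrow)$ direction is a direct contrapositive: throughout the algorithm $\zerov \leq f_t \leq R$ is maintained (a vertex $v \neq v_0$ with $f_t(v)=0$ has $(D-\vec{Q}^Tf_t)(v)\geq D(v)\geq 0$ and so is never decremented), so if the algorithm halts at some $f \neq \zerov$, the halting rule gives additionally $f(v_0) = 0$ and $(D - \vec{Q}^T f)(v) \geq 0$ for all $v \neq v_0$, making $f$ a nonzero valid firing strategy that witnesses failure of Definition~\ref{reduced_def}(ii). For $(\Leftarrow)$, I would show by induction on $t$ that every valid firing strategy $g$ with $(D - \vec{Q}^T g)(v) \geq 0$ for all $v \neq v_0$ satisfies $g \leq f_t$. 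The base case $g \leq R = f_0$ is immediate. At a non-$v_0$ decrement (chosen because $(D - \vec{Q}^T f_t)(v) \leq -1$ for some $v \neq v_0$), if hypothetically $g(v) = f_t(v)$ then $h := f_t - g$ is $\geq \zerov$ with $h(v) = 0$, so $(\vec{Q}^T h)(v) = -\sum_u \vec{A}_{u,v}\, h(u) \leq 0$ and hence $(D - \vec{Q}^T g)(v) \leq (D - \vec{Q}^T f_t)(v) \leq -1$, a contradiction. At a $v_0$-decrement use $g(v_0) = 0 < f_t(v_0)$. At termination $g \leq f_{\onev \cdot R} = \zerov$, forcing $g = \zerov$ and contradicting that valid firings are nonzero.

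For part (ii), condition (i) of Definition~\ref{reduced_def} for $D' := D - \vec{Q}^T f_t$ is immediate from the condition triggering the $v_0$-decrement at step $t+1$. For condition (ii), I would strengthen the above invariant so that it also controls candidate firings with arbitrary $v_0$-coordinate. Let $n_s$ denote the number of $v_0$-decrements among the first $s$ steps, so that $f_s(v_0) = r_0 - n_s$. The refined claim is: for any $f^* \in \Z^{n+1}$ with $\zerov \leq f^* \leq R$ and $(D - \vec{Q}^T f^*)(v) \geq 0$ for all $v \neq v_0$, one has $f^* \leq f_s$ whenever $n_s \leq r_0 - f^*(v_0)$. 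The inductive proof is identical to the one above at non-$v_0$ decrements; at a $v_0$-decrement, $n_{s+1} = n_s + 1$ and the premise $n_{s+1} \leq r_0 - f^*(v_0)$ translates exactly into the coordinate inequality $f^*(v_0) \leq f_{s+1}(v_0)$ needed for the conclusion. This synchronization of premise and conclusion is the main technical point.

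To finish part (ii), suppose toward contradiction that $D'$ is not $v_0$-reduced: some valid firing strategy $g$ satisfies $(D - \vec{Q}^T(f_t + g))(v) \geq 0$ for all $v \neq v_0$. Put $f^* := f_t + g$, so $f^*(v_0) = f_t(v_0) = r_0 - n_t$ and the premise of the refined invariant at $s = t$ holds with equality. If $f^* \leq R$, the invariant gives $f^* \leq f_t$, forcing $g = \zerov$ and contradicting validity. Otherwise $f^*(v) > r_v$ for some $v \neq v_0$; then $h := f^* - R$ has $h(v_0) \leq 0$ and $h(v) > 0$, so Lemma~\ref{firing_reduced_lemma} applied to the $v_0$-reduced $D$ produces $v^* \neq v_0$ with $(D - \vec{Q}^T h)(v^*) < 0$. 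Since $\vec{Q}^T R = \zerov$, this equals $(D - \vec{Q}^T f^*)(v^*)$, contradicting the state condition on $f^*$ and completing the proof.
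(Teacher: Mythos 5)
Your proof is correct and follows essentially the same route as the paper: both directions hinge on comparing a hypothetical valid firing against the algorithm's decreasing sequence $f_s$ via the key inequality $(D-\vec{Q}^Tg)(v)\leq (D-\vec{Q}^Tf_s)(v)$ when $g\leq f_s$ and $g(v)=f_s(v)$, and part (ii) disposes of the overshoot case $f_t+g\not\leq R$ exactly as the paper does, by subtracting $R$ and invoking Lemma~\ref{firing_reduced_lemma} together with $\vec{Q}^TR=\zerov$. The only difference is bookkeeping: where the paper takes the maximal index $s$ with $f_s\geq f_t+f$ and derives the contradiction at step $s+1$, you run the same comparison as a forward induction whose $n_s$ count synchronizes the $v_0$-decrements, an equivalent reformulation.
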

\begin{proof}
{
$(i)$:  Clearly if $D$ is reduced then the algorithm terminates at
$f_{\onev \cdot R}=0$.

So assume that the algorithm terminates on the divisor $D$.
Take a valid firing $f$ with respect to $v_0$ and pick $t$ as large as possible such that $f_t \geq f$. The choice of $t$ implies that $f_{t+1}= f_t -\chi _{\{v\}}$ for some vertex $v \in V(\vec{G})\setminus\{v_0\}$ since $f(v_0)=0$. Therefore $f_t=f+f'$ where $f' \geq 0$ and $f'(v)=0$. Hence $(D-\vec{Q}^Tf)(v) = (D-\vec{Q}^Tf_t - \vec{Q}^Tf' )(v)\leq (D-\vec{Q}^Tf_t)(v)<0$ so the divisor $D$ satisfies the second condition of Definition~\ref{reduced_def}. Hence $D$ is $v_0$-reduced.

$(ii)$:  For the sake of contradiction, let $t$ be such that $f_{t+1}=f_t-\chi _{\{v_0\}}$ and
$D-\vec{Q}^Tf_t$ is not a $v_0$-reduced divisor.  There exists a valid firing strategy $f$ with respect to $v_0$
such that $((D-\vec{Q}^Tf_t)-\vec{Q}^Tf)(v) \geq 0$ for all $v \in V(\vec{G}) \setminus \{v_0\}$. Let $f'=f_t +f$, then we have two cases.
Assume there exists $v_i \in V(\vec{G})\setminus \{v_0\}$ such that $f'(v_i)>r_i$ then $f''=f'-R$ is a firing strategy which satisfies the conditions of Lemma~\ref{firing_reduced_lemma}, contradicting
the fact that for all $v \in V(\vec{G}) \setminus \{v_0\}$,
$(D-\vec{Q}^Tf')(v)>0$. Therefore, we can choose $s$ as large as possible such that $f_s \geq f'$. The choice of $s$ implies that there exists $v \in V(\vec{G})$ such that $f_s(v)=f'(v)$ and $f_{s+1}=f_s-\chi_{\{v\}}$. If $v=v_0$, since $t>s$, $f_{s+1} \geq f_t$ but $f_{s+1}(v_0) < f_t$, a contradiction. Hence $v
\in V(\vec{G}) \setminus \{v_0\}$ and $(D-\vec{Q}^Tf_{s})(v) < 0$. But $(D-\vec{Q}^Tf')(v) \leq (D-\vec{Q}^Tf_s)(v)<0$ and this contradicts the choice of $f$ and $f_t$.
}
\end{proof}

We conclude this section with the following definition which will appear in each of the subsequent sections.

\begin{definition}
\label{def:natural_RR}
Let $\vec{G}$ be a directed graph with the Riemann-Roch property.  Then
$\vec{G}$ has the natural Riemann-Roch property if its canonical
divisor $K$ has $i$th entry $\deg^+(v_i)-2$ for $0\leq i \leq n$.
\end{definition}
\subsubsection{The Sandpile Model \label{subsec:sandpile}}
The sandpile model for a directed graph is a constrained version of the ``row" chip-firing game.  We define a divisor $D$ to be a {\it $v_0$-sandpile configuration} if $D$ satisfies the condition (i) from Definition~\ref{reduced_def}. The vertex $v_0$ does not participate in this game and a vertex $v \in V(\vec{G}) \setminus \{v_0\}$ may only fire if it has at least as many chips as its out-degree (so that $v$ does not go in debt), and it never borrows. Morevover, we say that two configurations are the same if they agree at all vertices other than $v_0$.  This model has been studied in~\cite{HLMPPW08,  Lev11, Spe93}.  The goal of this section is show a connection between the sandpile model and the Riemann-Roch property for the row chip-firing game on a strongly connected directed graph.  To do this we will first show a connection between this model and $v_0$-reduced divisors.  We begin with some necessary definitions.


We now restrict our attention to the sandpile model.  We call a $v_0$-sandpile configuration {\it $v_0$-stable} if no vertex $v \in V(\vec{G}) \setminus \{v_0\}$ can fire. We note that while some authors require $v_0$ to be a global sink (in order to guarantee that a divisor will eventually stabilize), we simply insist that $v_0$ never fires. We say that a $v_0$-sandpile configuration $D'$ {\it stabilizes} to $D$, a $v_0$-stable configuration, if $D$ is $v_0$-sandpile achievable from $D'$.  To see that any $v_0$-sandpile configuration will eventually stabilize to a $v_0$-stable configuration, one may follow  an argument similar to the one from Lemma~\ref{reduce_exist_lemma}.  We note that, as the language suggests, $D$ is unique, i.e., stabilization is independent of the choice of firings, and a simple proof by induction on $k$, the length of the sequence of firings, gives this fact.   A $v_0$-stable configuration $D$ is said to be {\it $v_0$-reachable} from another $v_0$-sandpile configuration $D'$ if there exists an effective divisor $E$ such that $D'+E$ stabilizes to $D$.  A $v_0$-stable configuration is {\it $v_0$-recurrent} if it is $v_0$-reachable from any other $v_0$-sandpile configuration.

\begin{lemma}
\label{lem:recurrentdom}
A divisor $D$ is $v_0$-recurrent  if and only if there exists a divisor $D'$ such that $D'(v)\geq \deg^+(v)$ for all $v \in V(\vec{G}) \setminus \{v_0\}$ and $D'$ stabilizes to $D$.
\end{lemma}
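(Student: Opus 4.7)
The plan is to prove each direction separately. For $(\Rightarrow)$, I will apply the definition of recurrence to a convenient sandpile configuration. Take $C_0$ with $C_0(v)=\deg^+(v)$ for every $v\in V(\vec{G})\setminus\{v_0\}$; by recurrence of $D$ there exists an effective divisor $E$ such that $C_0+E$ stabilizes to $D$, and then $D':=C_0+E$ satisfies $D'(v)\geq\deg^+(v)$ for all $v\neq v_0$ and stabilizes to $D$, as required.

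For $(\Leftarrow)$, I will fix an arbitrary $v_0$-sandpile configuration $C$ and construct an effective $E$ such that $C+E$ stabilizes to $D$. The main tool is the following consequence of the abelian property of stabilization (already recorded in the paper): letting $C^{\ast}$ denote the stabilization of $C$, reached from $C$ by some legal firing script $f_{C}$, the script $f_{C}$ remains legal from $C+E$ for any effective $E$, because the legality of a firing at a non-$v_0$ vertex $v$ depends only on the chip count at $v$, and $(C+E)(v)\geq C(v)$ at every stage. Hence $f_{C}$ carries $C+E$ to $C^{\ast}+E$, and by uniqueness of stabilization, $C+E$ and $C^{\ast}+E$ have the same stabilization.

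I then define $E(v):=D'(v)-C^{\ast}(v)$ for $v\neq v_0$ and $E(v_0):=0$. Since $C^{\ast}$ is stable, $C^{\ast}(v)\leq \deg^+(v)-1<\deg^+(v)\leq D'(v)$ for every $v\neq v_0$, so $E(v)\geq 1$ and $E$ is effective. By construction $C^{\ast}+E$ agrees with $D'$ on $V(\vec{G})\setminus\{v_0\}$; since the sandpile dynamics on $V(\vec{G})\setminus\{v_0\}$ depend only on the values at non-$v_0$ vertices, and configurations are identified modulo the $v_0$-coordinate, $C^{\ast}+E$ stabilizes to the same configuration as $D'$, namely $D$. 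Combining with the abelian identity above shows $C+E$ stabilizes to $D$, proving $D$ is $v_0$-reachable from $C$; since $C$ was arbitrary, $D$ is $v_0$-recurrent.

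The main (modest) obstacle will be the abelian identity reducing the stabilization of $C+E$ to that of $C^{\ast}+E$. This follows directly from the abelian property of stabilization together with the elementary fact that adding chips can never invalidate a previously legal firing; the remaining steps are straightforward calculations using $D'\geq\deg^+$ off $v_0$ and stability of $C^{\ast}$.
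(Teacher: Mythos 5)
Your proof is correct and follows essentially the same route as the paper's: the forward direction applies the definition of recurrence to a configuration dominating $\deg^+$ off $v_0$, and the reverse direction stabilizes the arbitrary configuration $C$ to $C^{\ast}$, sets $E=D'-C^{\ast}$ away from $v_0$ (effective since $C^{\ast}$ is stable), and uses the fact that added chips never invalidate the legal firing script from $C$ to $C^{\ast}$ together with order-independence of stabilization to conclude $C+E$ stabilizes to $D$. Your treatment is, if anything, slightly more explicit than the paper's about the $v_0$-coordinate and the abelian-property step.
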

\begin{proof}
We begin with the easier of the two directions.  Assume that $D$ is $v_0$-recurrent and let $D''$ be some divisor such that $D''(v)\geq \deg^+(v)$.  By definition, $D$ is $v-0$ reachable from $D''$, therefore there exists some effective divisor $E$ such that $D''+E=D'$ stabilized to $D$.  This gives the existence of the $D'$ in the stament of the theorem.

Conversely, given some $v_0$-sandpile configuration $D`$ such that $D'(v)\geq \deg^+(v)$ for all $v \in V(\vec{G}) \setminus \{v_0\}$, which stabelizes to $D$, we will show that $D$ is $v_0$-recurrent.  Take some $D''$, a $v_0$-sandpile configuration.  We will show that $D$ is $v_0$-reachable from $D''$.  First let $D''$ stabilize to the configuration $D'''$.  Now $D'''\leq D'$ so that $D$ is $v_0$-reachable from $D'''$.  Let $D'-D'''=E\geq 0$.  We claim that $D'' +E$ stabilizes to $D$.  By the observation made above, that stabilization is independent of a choice of firings, it is sufficient to show that there exists a sequence of firings which brings $D'' +E$ to $D$.  Because $D'' +E \geq D''$ we can perform the sequence of firings which brought $D''$ to $D'''$.  This sequence of firings brings $D'' +E$ to $D'''+E=D'$ and this now stabilizes to $D$.


\end{proof}
The following definition is for the unconstrained row chip-firing game introduced in the previous section.  We say that a divisor $D$ is $v_0$-{\it negatively achievable} from $D'$ if there exists a sequence of borrowings by individual vertices such that at each step the vertex which borrows has a negative number of chips prior to borrowing.

\begin{lemma}
\label{lem:red_borrow_neg}
A divisor $\nu$ is $v_0$-reduced if and only if there exists a divisor $D$ with $D(v) < 0$ for all $v \in V(\vec{G}) \setminus \{v_0\}$ such that $\nu$ is $v_0$-negatively achievable from $D$.
 \end{lemma}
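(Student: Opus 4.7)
The plan is to prove both directions using the generalized Dhar's Algorithm (Theorem~\ref{dhar_alg_thm}), the count of $v_0$-reduced divisors in each equivalence class (Corollary~\ref{unique_firing_reduced_cor}), and an abelian-sandpile style confluence property for the borrow-at-negative dynamics (the ``dual'' of the classical abelian sandpile theorem for firings).

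For the forward direction, I assume $\nu$ is $v_0$-reduced and construct both the divisor $D$ and the borrowing sequence. Since $\vec{G}$ is strongly connected, the reduced Laplacian $\tilde{\vec{Q}}$ (obtained by deleting the row and column indexed by $v_0$) is a nonsingular $M$-matrix whose inverse has strictly positive entries. Consequently I can find an integer firing strategy $f \in \mathbb{Z}_{\geq 0}^{n+1}$ with $f(v_0)=0$, with $f(v)\ge r_v$ for every $v\ne v_0$, and with $(\vec{Q}^T f)(v) > \nu(v)$ for every $v\ne v_0$; setting $D := \nu - \vec{Q}^T f$ produces a divisor equivalent to $\nu$ with $D(v) < 0$ for all $v\ne v_0$. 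To build the sequence I apply a greedy procedure: starting from $D$, at each step borrow at any currently negative non-$v_0$ vertex, continuing until no such vertex exists. A confluence argument in the spirit of Lemma~\ref{reduce_exist_lemma} shows that this procedure terminates at a divisor $\nu^\ast$ depending only on $D$, with $\nu^\ast(v)\ge 0$ for all $v\ne v_0$. Moreover $\nu^\ast$ is $v_0$-reduced: were there a valid firing strategy $h$ with respect to $v_0$ making $\nu^\ast - \vec{Q}^T h$ nonnegative on non-$v_0$ coordinates, then—because the construction arranged $f\ge h$—the borrowings could be reordered to reach $\nu^\ast - \vec{Q}^T h$ strictly before $\nu^\ast$, contradicting maximality (Lemmas~\ref{natural_lemma} and~\ref{firing_reduced_lemma} are used here to convert $h$ to a natural firing strategy when handling intermediate steps). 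Finally, Corollary~\ref{unique_firing_reduced_cor} forces $\nu^\ast = \nu$, since both are $v_0$-reduced divisors in the class of $D$ reached by firing strategies whose $v_0$-coordinate vanishes.

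For the reverse direction, suppose $\nu$ is $v_0$-negatively achievable from some $D$ with $D(v) < 0$ for $v\ne v_0$ via a maximal borrowing sequence. Maximality immediately gives Definition~\ref{reduced_def}(i). For Definition~\ref{reduced_def}(ii), the abelian confluence identifies $\nu$ with the unique terminal divisor of any maximal borrow-at-negative process from $D$; the forward direction's analysis shows that this terminal divisor is $v_0$-reduced (specifically, it is the unique element of the class of $D$ among the $r_0$ reduced representatives of Corollary~\ref{unique_firing_reduced_cor} singled out by how many times $v_0$ borrows in the sequence). Hence $\nu$ itself is $v_0$-reduced.

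The main obstacle will be the rigorous justification of the abelian-sandpile confluence and termination properties for the borrowing dynamics. This is the precise analog for borrow-at-negative dynamics of the classical abelian sandpile theorem for fire-at-surplus dynamics; it can be established by a confluence argument patterned on the proof of Lemma~\ref{reduce_exist_lemma}, together with a local exchange argument ensuring commutativity of any two legal borrowings. Once that abelian property is in hand, the rest of the argument proceeds as outlined.
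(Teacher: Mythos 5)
There is a genuine gap, and it sits exactly at the heart of the lemma: the claim that the terminal divisor of a maximal borrow-at-negative process started from a divisor that is negative off $v_0$ is $v_0$-reduced (this is both the ``$\Leftarrow$'' direction of the statement and the step in your forward direction asserting that $\nu^\ast$ is $v_0$-reduced). Your justification --- ``because the construction arranged $f\ge h$, the borrowings could be reordered to reach $\nu^\ast-\vec{Q}^Th$ strictly before $\nu^\ast$, contradicting maximality'' --- does not hold up. The vector $f$ is the firing strategy used to build $D$ from $\nu$, not the borrowing strategy $g\le \zerov$ with $D-\vec{Q}^Tg=\nu^\ast$; to realize $\nu^\ast-\vec{Q}^Th=D-\vec{Q}^T(g+h)$ as an intermediate state of a borrow-at-negative sequence from $D$ you would need $h(v)\le -g(v)$ for every $v$, i.e.\ each vertex to have borrowed at least $h(v)$ times (and $h(v)$ can be as large as $r_v$); the dynamics only guarantee that each non-$v_0$ vertex borrows at least once. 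Even granting reachability, ``contradicting maximality'' is not immediate: one would still need confluence plus Lemma~\ref{lem:leftkernel} to turn ``a stable divisor is reached mid-sequence'' into a contradiction. More fundamentally, reducedness is condition (ii) of Definition~\ref{reduced_def}, quantified over all valid firing strategies $0\le h\le R$ with $h(v_0)=0$, which is strictly stronger than stability under single borrow-at-negative moves; so the abelian/confluence property you flag as the ``main obstacle'' is not the missing ingredient. Finally, your reverse direction invokes ``the forward direction's analysis'' for an arbitrary $D$ negative off $v_0$, but that analysis leaned on the special choice $f\ge R$ built into your particular $D$, so the appeal is circular.

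For comparison, the paper proves this direction head-on. Writing $f_S$ for the total borrowing taking $D$ to $\nu$ and supposing a valid firing $h$ makes $\nu-\vec{Q}^Th$ nonnegative off $v_0$, it splits into two cases: if $h+f_S\not\le\zerov$, a maximal connected set $A\subseteq V(\vec{G})\setminus\{v_0\}$ on which $h+f_S>0$ loses a net positive number of chips, impossible because every vertex of $A$ is negative in $D$ yet nonnegative in $\nu-\vec{Q}^Th$; if $h+f_S\le\zerov$, taking the longest prefix of the borrowing sequence that dominates $h+f_S$ exhibits a vertex that is negative immediately before it borrows but would have to be nonnegative in $\nu-\vec{Q}^Th$, a contradiction. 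Once that direction is in hand, your forward direction is essentially the paper's: produce an equivalent divisor negative off $v_0$ (your $M$-matrix construction is fine, modulo integrality details; the paper fires greedily instead), borrow back greedily, and use Lemma~\ref{r_0_distinct_v_0_lem}/Corollary~\ref{unique_firing_reduced_cor} to force the endpoint to equal $\nu$. So the fix is to supply a genuine proof, valid for arbitrary such $D$, that negatively achievable implies reduced --- for instance along the paper's two-case argument --- rather than the reordering sketch.
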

\begin{proof}

We will first show that if $\nu$, a $v_0$-sandpile divisor, is $v_0$-negatively achievable from $D$ with $D(v) < 0$ for all $v \in V(\vec{G}) \setminus \{v_0\}$ then $\nu$ is $v_0$-reduced.  We now introduce some notation, which will be useful for this proof.  Let $S:v_{a_1}, \dots, v_{a_k}$ be the sequence of vertices which borrow and let $f_{S} \leq 0$ be the corresponding firing so that $D-Q^Tf_{S}=\nu$. Let $f_{S,j}$ be the firing strategy defined as $f_{S,j}(v)=|\{i:v_{a_i}=v, i\leq j \}|$ for $1 \leq j \leq k$, with $f_{S,0}=\vec{0}$.  Assume that $\nu$ is not $v_0$-reduced and let $f\neq \vec{0}$ be a natural firing such that $\nu-Q^Tf=\nu'$ is a $v_0$-sandpile divisor.  If $f+ f_{S}\nleq 0$ then there exists a maximal connected subset $A$ of $V(\vec{G}) \setminus \{v_0\}$ such that $(f+ f_{S})(v)>0$ for all $v\in A$, but the set $A$ loses a net positive amount of money via the firing $(f+ f_{S})$ contradicting the fact that $D-Q^T(f+ f_{S})=\nu'$ is a $v_0$ sandpile configuration and $D(v)<0$ for all $v \in A$.  Because $f+ f_{S}\leq 0$ we may take $j$ maximum so that $f_{S,j}\geq f+ f_{S}$ but $f_{S,j+1}\ngeq f+ f_{S}$.  This shows that $0\leq \nu'(v_{a_{j+1}})=(D-Q^T( f+ f_{S}))(v_{a_{j+1}})\leq (D-Q^Tf_{S,j})(v_{a_{j+1}})<0$, a contradiction.

We now show that for any $v_0$-reduced divisor $\nu$ there exists some $D$ with $D(v) < 0$ for all $v \in V(\vec{G}) \setminus \{v_0\}$ such that $\nu$ is $v_0$-negatively achievable from $D$. Take $\nu$ and greedily fire vertices in $v \in V(\vec{G}) \setminus \{v_0\}$ with an nonnegative number of chips until you obtain $D$ with $D(v) < 0$ for all $v \in V(\vec{G}) \setminus \{v_0\}$.  To see that this process will eventually terminate adapt the argument give in Lemma~\ref{reduce_exist_lemma} for why greedy reduction of a divisor terminates.  We claim that $D$ is the desire divisor.  If we now, as above, we greedily borrow by vertices in $v \in V(\vec{G}) \setminus \{v_0\}$ which are in debt, we will stop at a $v_0$-reduced divisor $\nu'$.  To see that this process eventually terminates, again mimic the argument from Lemma~\ref{reduce_exist_lemma}.  The fact that $\nu'$ is $v_0$-reduced was proven above.  The divisor $\nu'$ is clearly equivalent to $\nu$, and $v_0$ did not participate in the above process, hence the divisor obtained is equal to $\nu$.
\end{proof}

The authors, independently from Speer~\cite{Spe93}, discovered the following theorem.

\begin{theorem}
\label{thm:recurrent_reduced}
A $v_0$-sandpile configuration $D$ is $v_0$-recurrent if and only if the divisor $\nu$ is a $v_0$-reduced divisor, where $\nu(v_i)=\deg^+(v_i)-1-D(v_i)$ for all $0 \leq i \leq n$.
\end{theorem}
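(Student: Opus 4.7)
The plan is to translate sandpile dynamics on $D$ into the borrowing dynamics on $\nu$ appearing in Lemma~\ref{lem:red_borrow_neg}, and then apply Lemma~\ref{lem:recurrentdom} and Lemma~\ref{lem:red_borrow_neg} in succession. The bridge is simply the involution $D \leftrightarrow \nu$ defined by $\nu(v_i) = \deg^+(v_i) - 1 - D(v_i)$ for all $i$, under which sandpile firings on $D$ become chip-firing borrowings on $\nu$.

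The key observations I would establish are the following. First, a single sandpile firing of $v \neq v_0$ on $D$, which requires $D(v) \geq \deg^+(v)$ and sends $D \mapsto D - \vec{Q}^T e_v$, corresponds under the involution to a single borrowing of $v$ on $\nu$, which requires $\nu(v) \leq -1$ (i.e., $\nu(v) < 0$) and sends $\nu \mapsto \nu + \vec{Q}^T e_v$; this follows from the definition of $\nu$ together with the identity $D(v) \geq \deg^+(v) \iff \nu(v) \leq -1$. Second, $D$ is $v_0$-stable if and only if $\nu(v) \geq 0$ for every $v \neq v_0$, i.e., $\nu$ satisfies condition (i) of Definition~\ref{reduced_def}. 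Third, a divisor $D^*$ with $D^*(v) \geq \deg^+(v)$ for every $v \neq v_0$ corresponds under the involution to $\nu^*$ with $\nu^*(v) \leq -1 < 0$ for every $v \neq v_0$.

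For the forward direction I would suppose $D$ is $v_0$-recurrent and invoke Lemma~\ref{lem:recurrentdom} to produce $D^*$ with $D^*(v) \geq \deg^+(v)$ for $v \neq v_0$ which stabilizes to $D$ through some sandpile firing sequence. Applying observation (a) firing-by-firing translates this into a sequence of borrowings-while-in-debt from $\nu^*$ to $\nu$, where $\nu^*(v) < 0$ for $v \neq v_0$ by observation (c). Hence $\nu$ is $v_0$-negatively achievable from $\nu^*$, so Lemma~\ref{lem:red_borrow_neg} concludes that $\nu$ is $v_0$-reduced. The reverse direction is symmetric: given that $\nu$ is $v_0$-reduced, Lemma~\ref{lem:red_borrow_neg} furnishes some $M$ with $M(v) < 0$ for $v \neq v_0$ such that $\nu$ is $v_0$-negatively achievable from $M$; setting $D^*(v_i) = \deg^+(v_i) - 1 - M(v_i)$ gives $D^*(v) \geq \deg^+(v)$ for $v \neq v_0$ by (c), and observation (a) translates the borrowing sequence into a sandpile firing sequence from $D^*$ ending at $D$, which is $v_0$-stable by (b). By uniqueness of stabilization $D^*$ stabilizes to $D$, and Lemma~\ref{lem:recurrentdom} yields that $D$ is $v_0$-recurrent.

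The principal obstacle is verifying observation (a) carefully, since it is the linchpin of the translation. Once one confirms that the involution $\nu(v_i) = \deg^+(v_i) - 1 - D(v_i)$ interchanges the firing move $D \mapsto D - \vec{Q}^T e_v$ with the borrowing move $\nu \mapsto \nu + \vec{Q}^T e_v$ and simultaneously exchanges the sandpile firing-legality condition $D(v) \geq \deg^+(v)$ with the chip-firing debt condition $\nu(v) < 0$, the remainder is a direct combinatorial translation between the characterizations of $v_0$-recurrence and $v_0$-reducedness supplied by Lemma~\ref{lem:recurrentdom} and Lemma~\ref{lem:red_borrow_neg}.
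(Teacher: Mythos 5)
Your proposal is correct and follows essentially the same route as the paper's proof: the involution $\nu = K + \onev - D$ with $K(v_i)=\deg^+(v_i)-2$, under which legal sandpile firings (chips $\geq$ outdegree) correspond exactly to borrowings while in debt, combined with Lemma~\ref{lem:recurrentdom} and Lemma~\ref{lem:red_borrow_neg}. You simply spell out the firing-by-firing translation and the stability/condition-(i) correspondence more explicitly than the paper does.
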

\begin{proof}
Let $K$ be the divisor such that $K(v_i)=deg^+(v_i)-2$.  We first note that the map $\phi (D)= K+\vec{1}-D$ is a bijection between divisors $D$ such that $D(v)\geq \deg^+(v)$  for all $v \in V(\vec{G}) \setminus \{v_0\}$ and divisors $D$ such that $D(v)<0$  for all $v \in V(\vec{G}) \setminus \{v_0\}$.  The theorem then follows by observing that $\nu$ is $v_0$-negatively achievable from $D$ with $D(v) <0$  for all $v \in V(\vec{G}) \setminus \{v_0\}$ if and only if $\phi(\nu)$ is $v_0$-sandpile achievable from $\phi(D)$ with $(\phi(D))_i\geq \deg^+(v_i)$ for all $v \in V(\vec{G}) \setminus \{v_0\}$.
\end{proof}
We note that using the notion of equivalence given by the unconstrained row chip-firing game, the previous theorem shows that there are exactly $r_0$ $v_0$-recurrent divisors in each equivalence class.  This is different from the case of undirected graphs or directed graphs with $v_0$ a global sink, where the recurrent state in each equivalence class is unique.

We define a divisor $D$ to be minimally $v_0$-recurrent if, ignoring the value of $D(v_0)$, it is minimal with respect to dominance among all $v_0$-recurrent divisors.  Using this definition we have a new way of describing the natural Riemann-Roch property in terms of the sandpile model for strongly connected directed graphs.
\begin{theorem}
A directed graph, $\vec{G}$ has the natural Riemann-Roch property if and only if for each minimal $v_0$-recurrent divisor $D$ there exists $D'=D+ke_0$, $k\in \Z$, $E_i \in \Z_{\geq 0}$ for $0 \leq i \leq n$ such that $E_i(v_i)=0$ and $E_i(v_j)>0$ for $j \neq i$ and $D' \sim E_i$ and each $D'$ is of fixed degree $g-1\in \N$.
\end{theorem}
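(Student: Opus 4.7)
Let $K$ be the divisor with $K(v_i) = \deg^+(v_i) - 2$. My plan centers on the bijection $\phi(D) = K + \onev - D$ from Theorem~\ref{thm:recurrent_reduced}, which sends $v_0$-recurrent divisors to $v_0$-reduced divisors. Applying $\phi$ to $D$ and to each $D - e_v$ with $v \neq v_0$ shows that $D$ is minimally $v_0$-recurrent if and only if $\nu := \phi(D)$ is \emph{saturated}, meaning that $\nu + e_v$ fails to be $v_0$-reduced for every $v \neq v_0$. This converts the ``recurrent'' side of the biconditional into a purely lattice-theoretic statement about saturated $v_0$-reduced divisors and the natural canonical divisor $K$.

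Next I would identify the equivalence class of a saturated $v_0$-reduced $\nu$ with an extreme divisor class of $\Sigma(\L_{\vec{G}})$ via Lemma~\ref{extreme_reduced_effective lem}(ii). If $\nu$ is extreme, the $v_0$-reduced representative guaranteed by that lemma (with value $-1$ at $v_0$) is easily checked to be saturated. The reverse direction is more delicate: given saturation one needs, for every vertex $v$, a $v$-reduced representative equal to $-1$ at $v$. The strategy is to run the generalized Dhar's algorithm (Theorem~\ref{dhar_alg_thm}) at each target vertex $v$, exploiting saturation together with Lemma~\ref{r_0_distinct_v_0_lem} (which guarantees $r_0$ distinct $v_0$-reduced representatives in each class) to produce the required $v$-reductions. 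The integer shift $D + ke_0$ in the theorem statement corresponds precisely to moving among these $r_0$ representatives of the same equivalence class.

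The final step is to transfer from extreme divisors of $\Sigma(\L_{\vec{G}})$ to the continuous extreme divisors appearing in the theorem. Theorem~\ref{extreme_sigma_closure_sigma_thm} provides the bijection $\nu \leftrightarrow \nu + \onev$ between $Ext(\Sigma(\L_{\vec{G}}))$ and $Ext(\overline{\Sigma}_{\R}(\L_{\vec{G}}))$; Theorem~\ref{extreme_sigma_closure_thm} combined with the facet description in Lemma~\ref{delta_simplex_easy_lem}(ii) then translates membership in the latter set into exactly the existence of effective $E_i \sim D'$ with $E_i(v_i) = 0$ and $E_i(v_j) > 0$ for $j \neq i$. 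The ``fixed degree'' condition on the $D'$'s encodes uniformity ($g_{\min} = g_{\max}$), and uniformity together with the observation that the canonical divisor coming out of the bijection $\phi$ is precisely the natural $K$ with entries $\deg^+(v_i) - 2$ yields, via Theorem~\ref{RR_formula_equiv_U_RI_thm}, the natural Riemann-Roch property.

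The main obstacle is Step 2: saturation of $\nu$ only provides information about what happens when one adds $e_v$ and retains $v_0$-reducedness, whereas Lemma~\ref{extreme_reduced_effective lem}(ii) demands the existence of a $v$-reduced representative with value $-1$ at $v$ for every $v$. Bridging these viewpoints will require invoking Dhar's algorithm to commute reductions at different base vertices while tracking accumulated shifts along $e_0$; the bookkeeping is where I expect the most care to be needed, together with a consistent reconciliation of the degree (in particular, accounting for the shift by $\onev$ when passing to continuous extremes, which introduces an extra $\sum_i r_i$ in naive degree computations and must be squared with the statement's ``fixed degree $g-1$'').
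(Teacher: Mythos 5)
Your overall architecture does track the paper's: apply the bijection $\phi(D)=K+\onev-D$ of Theorem~\ref{thm:recurrent_reduced}, identify minimal $v_0$-recurrent configurations with extreme $v_0$-reduced divisors, and then use Theorems~\ref{extreme_sigma_closure_sigma_thm} and~\ref{extreme_sigma_closure_thm} to turn extremeness into the $E_i$-condition, with the fixed degree encoding uniformity and the natural $K$ feeding into Theorem~\ref{RR_formula_equiv_U_RI_thm}. But two specific points are wrong or misdirected. First, the shift $D'=D+ke_0$ does \emph{not} ``correspond to moving among the $r_0$ $v_0$-reduced representatives of the same equivalence class'': adding $ke_0$ changes the divisor class and its degree, whereas the $r_0$ representatives of Lemma~\ref{r_0_distinct_v_0_lem} all lie in a single class and are not $e_0$-translates of one another. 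The shift's actual role is only to normalize the value at $v_0$ --- which the sandpile model ignores --- so that $\nu=K+\onev-D'$ is the $v_0$-reduced representative with $\nu(v_0)=-1$ (equivalently, so that $D'$ sits at the correct degree). Since your bridge for the delicate ``saturated $\Rightarrow$ extreme'' step leans on this identification, that part of the plan fails as written; note also that the direction you call ``easily checked'' is not immediate when $r_0>1$, because a class has several $v_0$-reduced representatives and Lemma~\ref{extreme_reduced_effective lem}(i) only guarantees that \emph{some} representative of an effective class is effective, so $\nu+e_v$ could a priori remain $v_0$-reduced (with $-1$ at $v_0$) while its class is still effective.

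Second, your final transfer is aimed at the wrong divisor. The $E_i$-condition in the statement characterizes $D'\in Ext(\overline{\Sigma}_{\R}(\L))$, i.e.\ $D'-\onev=K-\nu\in Ext(\Sigma(\L))$: it is a statement about the \emph{reflection} $K-\nu$ of the extreme divisor $\nu=\phi(D')$, not about $\nu+\onev$. The whole content of the theorem is that the natural Riemann-Roch property is equivalent to the map $\nu\mapsto K-\nu$ carrying extreme divisors to extreme divisors of one common degree (uniformity plus reflection invariance realized by the natural canonical divisor); if, as your last paragraph suggests, the $E_i$-condition is deduced from the extremeness of $\nu$ itself via the bijection $\nu\leftrightarrow\nu+\onev$, the condition becomes automatic and the Riemann-Roch hypothesis is never used. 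The paper's own proof is only a few lines and likewise asserts the recurrent-versus-extreme identification without detail, but it does route the argument through $\nu'=D'-\onev=K-\nu$; your write-up needs to be reorganized around that reflection (and around the off-by-$\onev$ degree bookkeeping you correctly flag, which the paper also glosses over) before it can be considered a proof.
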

\begin{proof}
Clearly $D$ is minimally $v_0$ recurrent if and only if, by Theorem \ref{thm:recurrent_reduced}, we may fix $D'$ as in the statement of the theorem such that $\nu=K-D' +\onev$  is extreme $v_0$-reduced.  Hence, $\vec{G}$ has the natural Riemann Roch property if and only if $\nu'= D' - \onev \in Ext(\Sigma(\L)$ and is fixed degree $g-1$, which occurs precisely when $D' \in Ext(\Sigma_{\R}(\L))$ and is of fixed degree $g-1$. By Lemma \ref{extreme_sigma_closure_thm}, the Theorem follows.
\end{proof}
\subsection{Column Chip-Firing Game, $\vec{G}$-Parking Functions, and Riemann-Roch Theory \label{subsec:G-parking}}
In this section we present a chip-firing game which comes from the columns of the Laplacian matrix.

\begin{definition}
{
\label{g-parking_def} We call a divisor $D$ a directed $\vec{G}$-parking function (or simply $\vec{G}$-parking) with respect to $v_0$ if the following two conditions hold:
\begin{enumerate}
\item[(i)] for all $v \in V(\vec{G})\setminus  \{v_0\}, D(v)\geq 0$,
\item[(ii)] for every set $A \subseteq V(\vec{G}) \setminus \{v_0\}$, there exists some $v \in A$ such that $|{\{(v,u) \in E(\vec{G}): u \notin A}\}|\geq D(v)$.
\end{enumerate}
}
\end{definition}
We introduce the following ``column" chip-firing game wherein if a vertex $v$ {\it fires}, it loses $\deg^+(v)$ chips and sends a chip along each incoming edge $(u,v) \in E(\vec{G})$ ({\it borrowing} is defined as the inverse of firing).  Note that the total number of chips is not preserved by firing in contrast to the previous ``row" chip-firing game. It is not hard to see that if all vertices in a set $A$ fire once then a vertex $v \in A$ will lose as many chips as it has edges leaving $A$, i.e., $|{\{(v,u): u \notin A}\}|$, while a vertex $u \not \in A$ will gain as many chips as it has edges entering to it from $A$, i.e., $|{\{(v,u): v \in A}\}|$.  One may view this game as a walk through the lattice spanned by the columns of the Laplacian of $\vec{G}$ and it follows immediately that if $D$ is a divisor then $(D-\vec{Q}\chi_{A})(v)=D(v)-|{\{(v,u): u \notin A}\}|$ if $v \in A$ and $(D-\vec{Q}\chi_{A})(u)=D(u)+|{\{(v,u): v \in A}\}|$ if $u \notin A$. Because $\vec{Q}$ is orthogonal to $\onev$, i.e., $\vec{Q}\onev=\zerov$, we have that for any firing strategy $f$ there exists some firing strategy $f'$ such that $\vec{Q}(f-f')=\zerov$ and $f'\leq \chi_A$ for some $A \subseteq V(\vec{G}) \setminus \{ v_0 \} $.  It is also worth mentioning that if $R=(r_0, \dots, r_n) \in \N^{n+1}$ is the vector guaranteed by Lemma~\ref{lem:leftkernel} such that $R^T\vec{Q}={\zerov}^T$, then $\deg_R(\vec{Q}f)=0$ for all $f \in \Z^{n+1}$, i.e., the total number of chips is preserved in the ``column" chip-firing game with respect to $\deg_R(\cdot)$. One may also interpret this fact combinatorially by assigning to each vertex $v_i$ its own ``chip currency'' worth $r_i$ of a ``universal chip currency''.  Similar notions of ``currencies'' and ``exchange rates'' are employed when discussing chip-firing on arithmetical graphs in Section~\ref{AGrpahs}.

A $\vec{G}$-parking function with respect to $v_0$ is a divisor $D$ such that $D(v)\geq 0$ for all $v\in V(\vec{G})\setminus \{v_0\}$ and for each set $A\subset V(\vec{G})\setminus \{v_0\}$ there exists some vertex $v\in A$ such that $|{\{(v,u): u \notin A}\}|>D(v)$.  This definition is precisely analogous to the definition of a $v_0$-reduced divisors from the ``row" chip-firing game. More specifically, if we change $\vec{Q}^T$ to $\vec{Q}$ in definition of $v_0$-reduced divisor (Definition~\ref{reduced_def}), then we get the definition of $\vec{G}$-parking function with respect to $v_0$ (Definition~\ref{g-parking_def}). Hence, Dhar's algorithm introduced in~\cite{BN07, Dhar90} applies in verifying whether $D$ is $\vec{G}$-parking function with respect to $v_0$. Note that for undirected graphs the notion of a $v_0$-reduced divisor and a $G$-parking function agree as the Laplacian is symmetric, i.e., the ``row" and ``column" chip-firing games are identical.  It is a well known fact, and has several combinatorial proofs, that the $\vec{G}$-parking functions are in bijection with set of rooted directed spanning trees~\cite{CP05}.

An {\it Eulerian} directed graph $\vec{H}$ is a directed graph such that $\deg^+(v)=\deg^-(v)$ for each $v \in V(\vec{H})$.  The name is derived from the fact that they are exactly those directed graphs which possess a directed Eulerian circuit. 
\begin{theorem}
\label{thm:RR_column_chip}
Let $\vec{G}$ be a strongly connected directed graph with Laplacian $\vec{Q}$ and let $\vec{G'}$ be the Eulerian directed graph with Laplacian $\vec{Q}^T\mathcal{R}$ where $\mathcal{R}=diag(r_0, \dots, r_n)$ where$ \mathcal{R}\vec{1}\vec{Q}$.  The directed graph  $\vec{G}$ has the Riemann-Roch property for the column chip-firing game if and only if the directed graph $\vec{G'}$ has the Riemann-Roch property for the row chip-firing game.
\end{theorem}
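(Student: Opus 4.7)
The approach is to recognize that the column chip-firing game on $\vec{G}$ and the row chip-firing game on $\vec{G'}$ correspond to lattices related by the diagonal coordinate rescaling $\mathcal{R}$, so that the theorem reduces to a direct application of Theorem~\ref{thm:RR_R_one}.

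First I would verify that $\vec{Q'} := \vec{Q}^T\mathcal{R}$ really is the Laplacian of an Eulerian directed graph. The off-diagonal entries are $(\vec{Q'})_{ij} = r_j \vec{Q}_{ji} = -r_j A_{ji}$ for $i \neq j$, which are nonpositive integers, so $\vec{Q'}$ encodes legitimate edge multiplicities. The row sums vanish because $\vec{Q'}\onev = \vec{Q}^T\mathcal{R}\onev = \vec{Q}^T R = \zerov$, where the last equality uses Lemma~\ref{lem:leftkernel} together with $\mathcal{R}\onev = R$. A short bookkeeping with the formula $(\vec{Q'})_{ij} = r_j \vec{Q}_{ji}$ then shows that both the outdegree and indegree of $v_i$ in $\vec{G'}$ equal $r_i \deg^+_{\vec{G}}(v_i)$, so $\vec{G'}$ is Eulerian.

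The key structural observation is that the $i$-th row of $\vec{Q'}$, viewed as a vector in $\Z^{n+1}$, is exactly $\mathcal{R}$ applied to the $i$-th column of $\vec{Q}$. Consequently, if $\L$ denotes the column lattice of $\vec{Q}$ (which sits inside $\L_R$ because $R^T\vec{Q} = \zerov^T$ by Lemma~\ref{lem:leftkernel}), then the row lattice of $\vec{Q'}$ is precisely $\mathcal{R}\L \subseteq \L_{\onev}$. Since $\mathcal{R}$ is an invertible diagonal matrix, $\mathcal{R}\L$ has the same rank as $\L$, so both lattices are full-dimensional in their respective ambient hyperplanes.

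Finally, Theorem~\ref{thm:RR_R_one} asserts that a full-dimensional sublattice $\L$ of $\L_R$ has the Riemann-Roch property if and only if $\mathcal{R}\L \subseteq \L_{\onev}$ does; applied to the column lattice of $\vec{G}$ this is exactly the claim. There is no deep obstacle in this argument: the whole proof is essentially a repackaging of the previously established equivalence under a well-chosen matrix identity. The main potential pitfall, and thus the part that needs the most care, is keeping the roles of ``rows'' and ``columns'' (and of $\vec{Q}$ versus $\vec{Q}^T$) straight, and correctly identifying $R = \mathcal{R}\onev$ as the generator of the left kernel of $\vec{Q}$ promised by Lemma~\ref{lem:leftkernel}.
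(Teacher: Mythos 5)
Your proposal is correct and follows essentially the same route as the paper: identify the column lattice $\L'_{\vec{G}}$ of $\vec{Q}$ as a full-rank sublattice of $\L_R$, observe that $\mathcal{R}\L'_{\vec{G}}$ is exactly the row lattice of $\vec{Q}^T\mathcal{R}$, and invoke Theorem~\ref{thm:RR_R_one}. The extra bookkeeping you do (checking that $\vec{Q}^T\mathcal{R}$ is a genuine Laplacian of an Eulerian digraph and that $R=\mathcal{R}\onev$ spans the left kernel) is a sound elaboration of details the paper leaves implicit.
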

\begin{proof}
Let $\L'_{\vec{G}}=\{\vec{Q}f: f \in \Z^{n+1}\}$ be the lattice spanned by the columns of $\vec{Q}$.  It follows by Theorem~\ref{thm:RR_R_one} that $\L'_{\vec{G}}$ has the Riemann-Roch property if and only if $\mathcal{R}\L'_{\vec{G}}$ does.  This is the lattice spanned by the rows of $\vec{Q}^T\mathcal{R}$ completing the proof.
\end{proof}

We note that the column chip-firing game for an Eulerian digraph is the same game as the row chip-firing game played on the same directed graph with as of the orientations of all of the arrows reversed.  This explains why we are passing to the transpose of the Laplacian in the proof.

Amini and Manjunath~\cite{AM09} have some results related to Eulerian directed graphs (which they call regular digraphs).  By the previous theorem, all of these results extend to the column chip-firing game on strongly connected directed graphs.  We also remark that for testing whether a divisor is $v_0$-reduced, the burning algorithm of Dhar may be applied (burning along incoming edges) and this algorithm can be used to obtain several of the results of Amini and Manjunath related to Eulerian directed graphs.
\section{Arithmetical Graphs \label{AGrpahs}}
Let $G$ be a connected undirected multigraph, choose an ordering $\{v_0, \dots, v_n\}$ of vertices of $G$, and let $A$ be the corresponding {\it adjacency matrix} of $G$. Let $R=(r_0, \dots, r_n)^T \in \Bbb N^{n+1}$ be such that $gcd(r_0,r_1 \dots, r_n)=1$ and let  $\delta_0, \dots, \delta_{n} \in \Bbb N$ be such that $(\mathcal{D}-A)R=\zerov$, where $\mathcal{D}=diag(\delta_0, \dots, \delta_{n})$.  We say $(G,R)$ is an {\it arithmetical graph} with Laplacian $Q=\mathcal{D}-A$ and corresponding multiplicity vector $R$, where for all $0 \leq i \leq n$ the value $r_i$ is the {\it multiplicity} of the vertex $v_i$. Note that an undirected graph $G$ can be considered as an arithmetical graph $(G,{\onev})$.

Consider the following chip-firing game played on the vertices of an arithmetical graph $(G,R)$. Suppose we have a ``universal chip currency'' and each vertex $v_i$ has its own ``$v_i$-chip currency'' such that each $v_i$-chip is worth $r_i$ of the ``universal chip currency''.  If a vertex $v_i$ {\it fires}, it loses $\delta_i$ of its own $v_i$-chips and sends $m_{i,j}$ $v_j$-chips to each $v_j$ adjacent to $v_i$, where $m_{i,j}$ is the number of edges between $v_i$ and $v_j$.  We define {\it borrowing} to be the inverse of firing.  Let $\L_{(G,R)}$ be the lattice spanned by the columns of $Q$.  It is easy to see that moves in this chip-firing game correspond to translations of some divisor $D$ by a lattice point $l \in \L_{(G,R)}$.  This observation allows us to make use of definitions and theorems from Section 2 when discussing the chip-firing game.

Let $(G,R)$ be an arithmetical graph and $\mathcal{R}=diag(r_0, \dots, r_{n})$. Let $\vec{G}_R$ be the directed graph obtained from $(G,R)$ by replacing each undirected edge $(v_i,v_j)$ with $r_j$ edges directed from $v_i$ to $v_j$ and $r_i$ edges directed from $v_j$ to $v_i$.  The chip-firing game for $(G,R)$ corresponds to the row chip-firing game for $\vec{G}_R$ by converting each vertex's currency to the universal chip currency.   If we define $\vec{Q}_R$ be the Laplacian of $\vec{G}_R$ we may observe that $\vec{Q}_R^T =\mathcal{R}Q$.   By Theorem~\ref{thm:RR_R_one}, it follows that the chip-firing game on $(G,R)$ will have the Riemann-Roch property if and only if the row chip-firing game on $\vec{G}_R$ has the Riemann-Roch property. The row chip-firing game on $\vec{G}_R$ is strictly ``finer'' than the chip-firing game on $(G,R)$ in the sense that a vertex, $v_i$ need not have a multiple of $r_i$ universal chips, although by the previous observation this difference does not effect whether the Riemann-Roch property holds.

In our discussion of the chip-firing game for arithmetical graphs we will borrow several definitions and methods from the row chip-firing game whose interpretation will be clear from the context in which they are used.  In particular the definition of a $v_0$-reduced divisor and the generalized Dhar's algorithm will be frequently employed.

\begin{theorem}
\label{asdf}
Let $(G,R)$ be an arithmetical graph with Laplacian $Q$ and let $\vec{G}_R$ be the associated directed graph.  Then $\vec{G}_R$ has the Riemman-Roch property for the column chip-firing game.
\end{theorem}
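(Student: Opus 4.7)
The plan is to reduce the claim, via the equivalence proved in Theorem~\ref{thm:RR_column_chip}, to the Baker--Norine Riemann--Roch theorem applied to a cleverly constructed undirected multigraph.

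First I would record the shape of $\vec{Q}_R$. The paper has already observed that $\vec{Q}_R^T = \mathcal{R}Q$, and since $G$ is an undirected multigraph the Laplacian $Q = \mathcal{D}-A$ is symmetric, so $\vec{Q}_R = Q\mathcal{R}$. Applying Theorem~\ref{thm:RR_column_chip} to $\vec{G}_R$ then tells me that $\vec{G}_R$ has the Riemann--Roch property for the column chip-firing game if and only if the associated Eulerian directed graph $\vec{G'}$, whose Laplacian is
$$\vec{Q}_R^T\mathcal{R} \;=\; (Q\mathcal{R})^T\mathcal{R} \;=\; \mathcal{R}Q^T\mathcal{R} \;=\; \mathcal{R}Q\mathcal{R},$$
has the Riemann--Roch property for the row chip-firing game.

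Next I would check that $\mathcal{R}Q\mathcal{R}$ is in fact the Laplacian of an undirected (connected) multigraph $G'$. Symmetry is clear. The off-diagonal entries are $-r_ir_jm_{ij}\le 0$, which I would interpret as placing $r_ir_jm_{ij}$ edges between $v_i$ and $v_j$; this keeps $G'$ loopless (since $m_{ii}=0$) and preserves connectedness of $G$ because every $r_i>0$. The diagonal entry in row $i$ is $r_i^2\delta_i$, and the arithmetical graph condition $QR = \zerov$, written coordinatewise, says exactly $\sum_{j\ne i} m_{ij}r_j = \delta_i r_i$. Multiplying by $r_i$ shows that the diagonal entry of $\mathcal{R}Q\mathcal{R}$ equals the sum of the absolute values of its off-diagonal entries in each row, so $\mathcal{R}Q\mathcal{R}$ is genuinely the Laplacian of the undirected multigraph $G'$.

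Finally I would invoke Baker--Norine: the Laplacian lattice of any connected undirected multigraph has the Riemann--Roch property. The row chip-firing game on the bidirected graph $\vec{G'}$ is exactly the Baker--Norine game on $G'$, so $\vec{G'}$ has the row Riemann--Roch property, and chaining back through Theorem~\ref{thm:RR_column_chip} yields the column Riemann--Roch property for $\vec{G}_R$. There is no genuine obstacle here beyond the bookkeeping above; the real content was built into Theorem~\ref{thm:RR_column_chip} (and, underneath it, Theorem~\ref{thm:RR_R_one}), and the key piece of arithmetical-graph-specific structure that I use is the symmetry of $Q$ together with the defining equation $QR = \zerov$, which together force $\mathcal{R}Q\mathcal{R}$ to be an honest graph Laplacian.
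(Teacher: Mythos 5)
Your proposal is correct and follows essentially the same route as the paper: reduce via Theorem~\ref{thm:RR_column_chip} to the row chip-firing game on the digraph with Laplacian $\mathcal{R}Q\mathcal{R}$, recognize this as the Laplacian of the undirected multigraph obtained from $G$ by replacing each edge $v_iv_j$ with $r_ir_j$ parallel edges, and invoke Baker--Norine. The only difference is that you spell out the verification (symmetry, nonpositive off-diagonal entries, and zero row sums via $QR=\zerov$) that the paper dismisses with ``as one can easily check,'' which is a welcome but not substantive addition.
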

\begin{proof}
By Theorem~\ref{thm:RR_column_chip} it is equivalent to ask the question for the row chip-firing game on the directed graph $\vec{H}$ whose Laplacian is $\mathcal{R}\vec{Q'}$ where $\vec{Q'}$ is the Laplacian for $\vec{G}_R$.  But $\vec{Q'}$ is simply $\vec{Q}\mathcal{R}$ and so $\vec{H}$ has Laplacian $\mathcal{R}\vec{Q}\mathcal{R}$ which as one can easily check is the Laplacian of the undirected graph obtained from $G$ by replacing each edge $(v_i, v_j)$ with $r_ir_j$ edges.  By Baker and Norine, this graph has the Riemman-Roch property and this completes the proof.
\end{proof}

Let $\mathcal{N}=\{D \in Ext(\Sigma(\L_{(G,R)})): \deg_R(D)=g_{\max}-1\}$. For each $0 \leq i \leq n$, let $N(v_i)$ denote the family of vertices which are adjacent to $v_i$ counting their multiplicities. We call $|N(v_i)|$ the {\it degree} of the vertex $v_i$ and we denote it by $\deg(v_i)$. Recall the definition of $g_0$, the number such that $2g_0-2=\sum_{i=0}^n r_i(\delta_i-2)$. It is not hard to verify and is noted in~\cite{Lor89} that $g_0$ is an integer. It is also easy to see that by firing all of the vertices of the $G$, we get $\sum_{i=0}^n r_i\delta_i=\sum_{i=0}^n r_i\deg(v_i)$. Therefore $2g_0-2=\sum_{i=0}^n r_i(\deg(v_i)-2)$.
 \begin{theorem}
{
\label{gmaxg_0}Let $(G,R)$ be an arithmetical graph.  Then  $g_{\max}\leq g_0$.
}
\end{theorem}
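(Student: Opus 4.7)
The plan is to prove the contrapositive: every extreme divisor $\nu \in \Sigma(\L_{(G,R)})$ satisfies $\deg_R(\nu) \leq g_0 - 1$. The first move is to invoke Lemma~\ref{extreme_reduced_effective lem}(ii)---applicable since the arithmetical graph chip-firing game is equivalent to the row chip-firing game on the associated directed graph $\vec{G}_R$ via the rescaling $\mathcal{R}$---to produce a $v_0$-reduced divisor $D \sim \nu$ with $D(v_0) = -1$ and $D(v_i) \geq 0$ for all $i \neq 0$. Since $\deg_R(D) = \deg_R(\nu)$, it suffices to bound $\deg_R(D)$.

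Next, I would apply the generalized Dhar's algorithm of Section~\ref{subsec:Dhar_alg} to $D$. Because $D$ is $v_0$-reduced, the algorithm terminates after exactly $N = \sum_i r_i$ decrement steps: $r_0$ of these decrement $v_0$ (``case (b),'' executed precisely when no $v \neq v_0$ is in debt), and the remaining $\sum_{i \neq 0} r_i$ decrement individual non-$v_0$ vertices while those vertices are in debt (``case (a)''). Each case-(a) step at time $t$ decrementing $v_i$ yields $D(v_i) + (Q g_{t-1})(v_i) \leq -1$, where $g_{t-1} := R - f_{t-1}$ records the multiplicities already borrowed; each case-(b) step provides the complementary inequality $D(v) + (Q g_{t-1})(v) \geq 0$ for all $v \neq v_0$.

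The heart of the proof is to aggregate these $N$ inequalities, weighted so that each vertex $v_i$ contributes with weight $r_i$, and reduce the result using: (i) the defining arithmetical identity $QR = \zerov$ (equivalently $R^\top Q = \zerov^\top$), (ii) the edge symmetry $m_{i,j} = m_{j,i}$, and (iii) the combinatorial pairing $\alpha_{i,j} + \alpha_{j,i} = r_i r_j$ for $i \neq j$, where $\alpha_{i,j}$ counts ordered pairs of steps $(s,t)$ with $s < t$ and $v_j, v_i$ decremented at $s, t$ respectively. Invoking (i)--(iii) along with the arithmetical relation $r_i \delta_i = \sum_{j \sim i} m_{i,j} r_j$ lets the interaction sums telescope, and one would deduce $\deg_R(D) \leq \tfrac12 \sum_i r_i \delta_i - \sum_i r_i = g_0 - 1$.

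The main obstacle is that summing only the case-(a) inequalities---the direct weighted analogue of the Baker and Norine burning argument for undirected graphs---produces the weaker bound $\deg_R(D) \leq g(H) - 1$, where $H$ is the undirected multigraph formed by replacing each edge $\{v_i,v_j\}$ of multiplicity $m_{i,j}$ by $r_i r_j m_{i,j}$ parallel edges; one computes $g(H) - g_0 = \tfrac12 \sum_i r_i(r_i - 1)\delta_i \geq 0$, which is positive as soon as some $r_i \geq 2$. Closing this gap forces one to bring in the case-(b) constraints at the $r_0$ steps when $v_0$ is decremented: the fact that no other vertex is in debt at those moments produces lower bounds on the cross-terms $\sum_{v_i \sim v_0} m_{0,i}\alpha_{0,i}$, and the hardest part of the argument is to see that these lower bounds exactly compensate the excess $\tfrac12\sum_i r_i(r_i - 1)\delta_i$ arising in the case-(a) bound, collapsing $g(H) - 1$ down to $g_0 - 1$.
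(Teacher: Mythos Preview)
Your overall strategy matches the paper's: pass to a $v_0$-reduced extreme divisor, run the generalized Dhar algorithm, extract one inequality per decrement step, and sum using the pairing identity $\alpha_{i,j}+\alpha_{j,i}=r_ir_j$ (equivalently, the paper's $\sum_{k}\mathcal{F}_{i,k}(v_j)+\sum_{\ell}\mathcal{F}_{j,\ell}(v_i)=r_ir_j$). Where you diverge is in the treatment of the $r_0$ steps at which $v_0$ is decremented, and this is where your sketch is both more complicated than necessary and not fully justified.

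The paper does \emph{not} use the case-(b) information ``every $v\neq v_0$ is out of debt'' to bound cross-terms. Instead it observes that your choice $D(v_0)=-1$ is automatically the \emph{maximum} $v_0$-value among $v_0$-reduced divisors equivalent to $D$ (since none of them is effective, all have $v_0$-value $\leq -1$). Now Theorem~\ref{dhar_alg_thm}(ii) says that at each $v_0$-decrement step the current divisor $D-Q\mathcal{F}_{0,k}$ is itself $v_0$-reduced, hence $(D-Q\mathcal{F}_{0,k})(v_0)\leq D(v_0)$, which unwinds to $k\delta_0-\sum_{v_j\in N(v_0)}\mathcal{F}_{0,k}(v_j)\geq 0$. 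Combined with $D(v_0)\leq -1$ this yields
\[
D(v_0)\;\leq\; k\delta_0-\sum_{v_j\in N(v_0)}\mathcal{F}_{0,k}(v_j)-1,
\]
i.e.\ $r_0$ inequalities for $v_0$ of exactly the same shape as the case-(a) inequalities for the other vertices. The sum over all $\sum_i r_i$ inequalities is then uniform, the pairing identity applies to \emph{every} edge including those incident to $v_0$, and one reads off $\deg_R(D)\leq g_0-1$ directly---no $g(H)$ detour, no ``compensation'' bookkeeping.

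Your proposed route through the case-(b) lower bounds on $\sum_{v_i\sim v_0}m_{0,i}\alpha_{0,i}$ is not obviously wrong, but you have not shown that those constraints actually produce the exact deficit $\tfrac12\sum_i r_i(r_i-1)\delta_i$ (and note your formula for $g(H)-g_0$ drops an additional $\sum_i(r_i-1)$ term). Since the maximality observation above closes the argument in one line, I would replace the last two paragraphs of your plan with it.
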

\begin{proof}
{
The following proof is an averaging argument employing the generalized Dhar's algorithms and gives a bound twice as good as the naive bound.  If one looks closely at the proof, it becomes apparent that arithmetical graphs are precisely those ``directed graphs'' for which such an averaging argument is effective.
Let $D \in \mathcal{N}$. Choose a $v_0$-reduced divisor $D' \sim D$ such that $D'(v_0)$ is as large as possible. For proving the theorem, it is enough to show that $\deg_R(D') \leq g_0-1$. Apply the generalized Dhar's algorithm to $D'$. For all $0 \leq i \leq n$ and $1 \leq k \leq r_i$, define
$\mathcal{F}_{i,k}$ to be the firing strategy obtained from the generalized Dhar algorithm such that
$\mathcal{F}_{i,k}(v_i)=k$ and the successor of $\mathcal{F}_{i,k}$ is the firing strategy
$\mathcal{F}_{i,k}-\chi_{\{v_i\}}$. For each $v_i \in V(\vec{G}) \setminus
v_0$ we obtain $r_i$ inequalities as follows:\\
for each $k$ where $1 \leq k  \leq r_i$, we have:
\begin{equation}
\label{othervertex_max-ing}
D'(v_i) \leq k\delta_i - \left(\sum_{v_j \in N(v_i)} \mathcal{F}_{i,k}(v_j)\right) -1,
\end{equation}
which follows from the fact that $(D'-Q\mathcal{F}_{i,k})(v_i) <0$ by choice of $\mathcal{F}_{i,k}$.
\\For the vertex $v_0$, we know that for all $1 \leq k \leq r_0$, $$k\delta_0 - \sum_{v_j \in N(v_0)} \mathcal{F}_{0,k}(v_j) \geq 0,$$
by the choice of $D'$ and the second assertion of Lemma~\ref{dhar_alg_thm}.  Because $D' \in \mathcal{N}$, by (ii) of Lemma~\ref{extreme_reduced_effective lem} we have that $D'(v_0)< 0$. Hence, for all $1 \leq k \leq r_0$,
\begin{equation}
\label{v_0max-inq}
  D'(v_0) \leq k\delta_0 - \left(\sum_{v_j \in N(v_0)} \mathcal{F}_{0,k}(v_j)\right)-1.
\end{equation}
Note that $\sum_{i=0}^{n} \sum_{k=1}^{r_i} D'(v_i)=D' \cdot R=\deg_R(D')$.

Now, taking the sum over all inequalities in~(\ref{othervertex_max-ing}) and~(\ref{v_0max-inq}), we have:
\begin{equation}
\label{main_ieq}
\sum_{i=0}^{n} \sum_{k=1}^{r_i} D'(v_i)\leq \sum_{i=0}^{n}r_i((r_i+1)\delta_i -2)/2 - \sum_{i=0}^{n} \sum_{k=1}^{r_i} \sum_{v_j \in N(v_i)}\mathcal{F}_{i,k}(v_j).
\end{equation}

We will now restrict our attention to $\sum_{i=0}^{n} \sum_{k=1}^{r_i} \sum_{v_j \in N(v_i)}\mathcal{F}_{i,k}(v_j)$.  By reordering the sums, we have
$$\sum_{i=0}^{n} \sum_{k=1}^{r_i} \sum_{v_j \in N(v_i)}\mathcal{F}_{i,k}(v_j)=\sum_{i<j, ~ v_iv_j \in E(G)}\left( \sum_{k=1}^{r_i}  \mathcal{F}_{i,k}(v_j) + \sum_{\l=1}^{r_j} \mathcal{F}_{j,\l}(v_i)\right).$$
We claim that if $v_iv_j \in E(G)$ then $\sum_{k=1}^{r_i}  \mathcal{F}_{i,k}(v_j) + \sum_{\l=1}^{r_j} \mathcal{F}_{j,\l}(v_i) = r_ir_j$.  We prove the claim by induction on $r_i+r_j$. If $r_i+r_j=2$, then the claim holds trivially, since $r_i=r_j=1$. Now suppose $r_i+r_j=m \geq 3$. Without loss of generality, assume $\mathcal{F}_{i,r_i}$ is generated before $\mathcal{F}_{j,r_j}$ in the run of the generalized Dhar's algorithm on $D'$. Hence
$$\sum_{k=1}^{r_i}  \mathcal{F}_{i,k}(v_j) + \sum_{\l=1}^{r_j} \mathcal{F}_{j,\l}(v_i) = r_j + \sum_{k=1}^{r_i-1}  \mathcal{F}_{i,k}(v_j) + \sum_{\l=1}^{r_j} \mathcal{F}_{j,\l}(v_i) = r_j+(r_i-1)r_j=r_ir_j.$$
The equality $\sum_{k=1}^{r_i-1}  \mathcal{F}_{i,k}(v_j) + \sum_{\l=1}^{r_j} \mathcal{F}_{j,\l}(v_i)=(r_i-1)r_j$ follows from induction hypothesis. This completes the proof of the claim.
So $$\sum_{i<j, ~ v_iv_j \in E(G)}\left( \sum_{k=1}^{r_i}  \mathcal{F}_{i,k}(v_j) + \sum_{\ell=1}^{r_j} \mathcal{F}_{j,\ell}(v_i)\right) = \sum_{i<j, ~ v_iv_j \in E(G)} r_ir_j={1 \over 2}\left(\sum _{i=0}^{n}r_i \sum_{v_j \in N(v_i)} r_j\right).$$
Since $QR=0$, for all $0 \leq i \leq n$, $\sum_{v_j \in N(v_i)} r_j=r_i\delta_i$.
Hence
\begin{equation}
\label{iductionobtained_eq}
\sum_{i=0}^{n} \sum_{k=1}^{r_i} \sum_{v_j \in N(v_i)}\mathcal{F}_{i,k}(v_j)={1 \over 2}\left(\sum _{i=0}^{n}r^2_i\delta_i \right).
\end{equation}
Now by substituting~(\ref{iductionobtained_eq}) into inequality~(\ref{main_ieq}), we have:
$$\deg_R(D')\leq \sum _{i=0}^{n}(r_i((r_i+1)\delta_i -2)/2 - {1 \over 2}\left(\sum _{i=0}^{n}r^2_i\delta_i \right) = \sum _{i=0}^{n}r_i(\delta_i -2)/2= g_0-1.$$
}
\end{proof}
So the above theorem shows that if, in a configuration of the game identified by $D \in Div((G,R))$, $\deg_R(D) \geq g_0$, then $D$ has a winning configuration.
\begin{corollary}
{
\label{g=g_0cor}
$g_{\max}=g_0$ if and only if all inequalities in~(\ref{othervertex_max-ing}) and~(\ref{v_0max-inq}) obtained in
a run of the generalized Dhar's algorithm on a $v_0$-reduced
divisor $D \in \mathcal{N}$ are tight, i.e. if $f_i$ is the sequence of firing strategies obtained from  the run of the generalized Dhar's algorithm on a $v_0$-reduced
divisor $D \in \mathcal{N}$, for all $0 \leq t \leq {\onev} \cdot R-1 $, if $f_{t+1}=f_t
- \chi_{\{v\}}$ then $(D-Q(f_t))(v)=-1$.
}
\end{corollary}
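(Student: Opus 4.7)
The proof of Theorem~\ref{gmaxg_0} produces the bound $\deg_R(D') \leq g_0-1$ for a $v_0$-reduced $D' \in \mathcal{N}$ with $D'(v_0)$ chosen maximal by summing the $\onev \cdot R$ individual inequalities~(\ref{othervertex_max-ing}) and~(\ref{v_0max-inq}) and then invoking the telescoping identity~(\ref{iductionobtained_eq}) to show that the aggregate right-hand side collapses to $g_0-1$. Each of these constituent inequalities has independently nonnegative slack: for $v_i \neq v_0$, $(D'-Q\mathcal{F}_{i,k})(v_i) \leq -1$ comes directly from the generalized Dhar termination criterion that identified $\mathcal{F}_{i,k}$, while for $v_0$ the bound $k\delta_0 - \sum_{v_j \in N(v_0)}\mathcal{F}_{0,k}(v_j) \geq 0$ comes from Lemma~\ref{dhar_alg_thm}(ii) combined with the maximality of $D'(v_0)$. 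My plan is simply to observe that the corollary is nothing more than the equality case of this chain, and that equality in a sum of independently nonnegative defects forces the vanishing of each defect.

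For the ($\Leftarrow$) direction, I will assume every individual inequality of~(\ref{othervertex_max-ing}) and~(\ref{v_0max-inq}) is tight for some $v_0$-reduced $D' \in \mathcal{N}$ and some run of the generalized Dhar's algorithm. Summing these equalities and applying~(\ref{iductionobtained_eq}) exactly as in the proof of Theorem~\ref{gmaxg_0} yields $\deg_R(D') = g_0-1$; combined with $\deg_R(D') = g_{\max}-1$ (from $D' \in \mathcal{N}$) and the bound $g_{\max} \leq g_0$ of Theorem~\ref{gmaxg_0}, this forces $g_{\max}=g_0$. For the ($\Rightarrow$) direction, I will suppose $g_{\max}=g_0$ and pick any $v_0$-reduced $D' \in \mathcal{N}$ with $D'(v_0)$ maximal (so that Theorem~\ref{gmaxg_0}'s argument applies verbatim), together with an arbitrary execution of the generalized Dhar's algorithm on $D'$. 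The equality $\deg_R(D')=g_{\max}-1=g_0-1$ then says that the inequality obtained by summing~(\ref{othervertex_max-ing}) and~(\ref{v_0max-inq}) is in fact an equality; since each summand has independently nonnegative slack, every summand must vanish, which is precisely the assertion that $(D'-Qf_t)(v)=-1$ whenever $f_{t+1}=f_t-\chi_{\{v\}}$.

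There is no substantive mathematical obstacle, since the corollary merely extracts the equality case of a chain already established. The only care required is bookkeeping, namely ensuring that the hypotheses used in the proof of Theorem~\ref{gmaxg_0} (specifically that $D'$ is $v_0$-reduced with $D'(v_0)$ maximal, which together with $D' \in \mathcal{N} \subseteq \Sigma(\L)$ forces $D'(v_0)=-1$ via Lemma~\ref{extreme_reduced_effective lem}(ii)) are in force whenever we invoke that proof's inequalities, so that passing from equality-of-sums to equality-of-summands is legitimate.
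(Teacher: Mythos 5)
Your reading of the corollary as the equality case of the chain of inequalities summed in the proof of Theorem~\ref{gmaxg_0} is exactly the intended argument (the paper supplies no separate proof), and your ($\Leftarrow$) direction and the sum-of-slacks mechanism are fine: the aggregate inequality is literally the sum of the individual ones, so zero total slack forces each slack to vanish. The one substantive issue is the quantifier over $D$ in your ($\Rightarrow$) direction. You only certify tightness for a $v_0$-reduced representative with $D'(v_0)$ maximal, because you import the inequalities~(\ref{v_0max-inq}) together with the maximality hypothesis under which the paper derives them. But the corollary is invoked later (in the theorem immediately following it, and in the proof of Theorem~\ref{g=g_0canonical_thm}, where $D$ is just ``without loss of generality'' some $v_0$-reduced element of $\mathcal{N}$) for an arbitrary $v_0$-reduced $D \in \mathcal{N}$ and an arbitrary run of the generalized Dhar's algorithm, so the maximal-representative version you prove is strictly weaker than what is used.

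The gap is small and you should close it rather than carry the maximality hypothesis. The inequalities~(\ref{othervertex_max-ing}) hold for any divisor that is nonnegative off $v_0$ and any run, directly from the firing rule~(\ref{dharmain_eq}): a vertex $v_i \neq v_0$ is decremented at $\mathcal{F}_{i,k}$ only when $(D-Q\mathcal{F}_{i,k})(v_i) \leq -1$. For~(\ref{v_0max-inq}), when $v_0$ is decremented at $\mathcal{F}_{0,k}$, Theorem~\ref{dhar_alg_thm}(ii) guarantees that $D-Q\mathcal{F}_{0,k}$ is again $v_0$-reduced; it is equivalent to $D$, and $D \in \mathcal{N} \subseteq \Sigma(\L_{(G,R)})$ is not equivalent to any effective divisor, so since a $v_0$-reduced divisor is nonnegative away from $v_0$ we must have $(D-Q\mathcal{F}_{0,k})(v_0) \leq -1$, which is precisely~(\ref{v_0max-inq}); no maximality of $D(v_0)$ is needed. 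With this observation your sum-of-slacks argument runs verbatim for every $v_0$-reduced $D \in \mathcal{N}$ and every run, which is the form of the corollary the paper actually uses.
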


It is clear, and demonstrated below, that if $D\in \mathcal{N}$ and deg$(D)$=$g_{max}$-1, then for each $v \in V(G)$ and $D' \sim D$ such that $D'$ is $v$-reduced, we have $D'(v)=-1$.  The following theorem shows that the converse is also true.

\begin{theorem}
{
Let $D \in \mathcal{N}$. Then $\deg(D)=g_{max}-1$ if and only if for each $D' \sim D$ such that $D'$ is a $v$-reduced divisor, $D'(v)=-1$.
}
\end{theorem}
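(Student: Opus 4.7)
The plan is to prove both directions of the biconditional by combining the generalized Dhar's algorithm of Section~\ref{subsec:Dhar_alg} with the characterization of extreme divisors in Lemma~\ref{extreme_reduced_effective lem}.

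For the forward direction, fix a vertex $v$ and a $v$-reduced $D'\sim D$; since $D\in \Sigma(\Lambda)$, the divisor $D'$ is not effective, so $D'(v)\leq -1$. I would argue $D'(v)\geq -1$ by contradiction: if $D'(v)\leq -2$ then $D'+\chi_{\{v\}}$ is again a $v$-reduced (by Remark~\ref{reduced_rem}) non-effective divisor, yet it is equivalent to $D+\chi_{\{v\}}$; by the extremity of $D$, this latter divisor is equivalent to an effective one, so Lemma~\ref{extreme_reduced_effective lem}(i) produces an effective $v$-reduced $\tilde{D}\sim D+\chi_{\{v\}}$. Inspecting $\tilde{D}-\chi_{\{v\}}$ (which is $v$-reduced $\sim D$) together with the maximality $\deg_R(D)=g_{\max}-1$, the plan is to exhibit a divisor in $\Sigma(\Lambda)$ whose degree exceeds $g_{\max}-1$, contradicting the definition of $g_{\max}$.

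For the converse, the hypothesis immediately forces $D\in \Sigma(\Lambda)$ (otherwise some $v$-reduced representative would be effective, in conflict with $D'(v)=-1$) and hence $D$ is extreme by Lemma~\ref{extreme_reduced_effective lem}(ii). To upgrade this to $\deg_R(D)=g_{\max}-1$, I would pick a vertex $v_0$ and a $v_0$-reduced $D'\sim D$, run the generalized Dhar's algorithm on $D'$, and observe that the hypothesis is precisely the statement that each of the inequalities~\eqref{v_0max-inq} is tight. Repeating the argument with each vertex playing the role of $v_0$ and assembling the resulting tight equalities via the interior-of-facet description of extreme points from Theorem~\ref{extreme_sigma_closure_thm} should identify $D+\onev$ with the maximum-degree extreme point of $\overline{\Sigma}_{\R}(\Lambda)$.

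The main obstacle lies in the converse: summing the Dhar-algorithm inequalities as in the proof of Theorem~\ref{gmaxg_0} only yields the coarser bound $\deg_R(D)\leq g_0-1$, so the local tightness of the $v$-inequalities at each vertex must be upgraded to a genuinely global maximum-degree statement by invoking the extreme-point geometry of $\overline{\Sigma}_{\R}(\Lambda)$, which is the most delicate step.
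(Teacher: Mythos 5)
Both halves of your proposal have genuine gaps, and in each case the missing step is precisely where the paper does its real work. In the forward direction your argument ends with a ``plan'': after producing the effective $v$-reduced divisor $\tilde{D}\sim D+\chi_{\{v\}}$ you never exhibit the promised divisor of $\Sigma(\L)$ with degree exceeding $g_{\max}-1$, and no such divisor can come out of this construction: $\deg_R$ is constant on equivalence classes, so every divisor equivalent to $D$ has degree exactly $g_{\max}-1$, while the only higher-degree candidate in sight, $D'+\chi_{\{v\}}$, lies in the class of $D+\chi_{\{v\}}$, which by extremeness of $D$ is equivalent to an effective divisor and hence is \emph{not} in $\Sigma(\L)$. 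The underlying issue is that $v$-reduced representatives are not unique in this setting (by Lemma~\ref{r_0_distinct_v_0_lem} there are exactly as many of them in a class as the multiplicity of $v$), so the coexistence of one representative with value $-1$ at $v$ and another with value $\le -2$ is not by itself contradictory, and a single non-effective $v$-reduced representative certifies nothing about membership in $\Sigma(\L)$. The paper does not obtain this direction by such a soft argument; it deduces it from the tightness of the Dhar inequalities via Corollary~\ref{g=g_0cor}, i.e., from the same quantitative machinery as the converse.

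In the converse, the hypothesis plus Theorem~\ref{dhar_alg_thm}(ii) does give tightness of the base-vertex inequalities~(\ref{v_0max-inq}), but that is the easy half. What is needed is tightness of \emph{every} inequality~(\ref{othervertex_max-ing}) in a \emph{single} run of the generalized Dhar's algorithm rooted at $v_0$; running the algorithm separately from each vertex produces inequalities involving different firing strategies that cannot be summed. This is the heart of the paper's proof: for each $v_i\neq v_0$ and each $1\le k\le r_i$ it invokes Corollary~\ref{unique_firing_reduced_cor} to obtain a $v_i$-reduced divisor $D-Qf$ with $f(v_i)=k$, replaces $f$ by a natural firing strategy, and compares it with the Dhar sequence (taking the last $f_j$ dominating it) to force $f_j=\mathcal{F}_{i,k}$ and hence $(D-Q\mathcal{F}_{i,k})(v_i)=-1$. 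Your proposed substitute, assembling per-vertex tightness via Theorem~\ref{extreme_sigma_closure_thm}, cannot close this gap: that theorem characterizes which divisors are extreme, not which extreme divisors have maximal degree, so identifying $D+\onev$ as an extreme point of $\overline{\Sigma}_{\R}(\L)$ only restates what you already know. Finally, you misidentify the last step as an obstacle: once all inequalities of one run are tight, summing as in Theorem~\ref{gmaxg_0} gives $\deg_R(D)=g_0-1$, and since $\deg_R(D)\le g_{\max}-1\le g_0-1$ this immediately forces $\deg_R(D)=g_{\max}-1$; the bound through $g_0$ is the intended mechanism, not something that needs a geometric upgrade.
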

\begin{proof}
{
Suppose $D\in \mathcal{N}$ with $\deg(D)=g_{max}-1$.  Take $v \in
V(\vec{G})$. By applying (ii) of Lemma~\ref{extreme_reduced_effective lem} we may pick $D'\sim D$ to be a $v$-reduced divisor such that $D'(v)=-1$. Corollary~\ref{g=g_0cor} implies that all the inequalities are tight, so
for all $v$-reduced divisor $D''\sim D$, $D''(v)=-1$.

Conversely, assume that $D\in \mathcal{N}$ is $v_0$-reduced and suppose that for each $D' \sim D$ which is an extreme $v$-reduced divisor, $D'(v)=-1$.  We wish to show that $\deg(D)=g_{max}-1$. Apply the generalized Dhar's algorithm to $D$, and define $\mathcal{F}_{i,k}$ to be the firing strategy obtained from the generalized Dhar algorithm such that $\mathcal{F}_{i,k}(v_i)=k$ and the successor of $\mathcal{F}_{i,k}$ is the firing strategy $\mathcal{F}_{i,k}-\chi_{\{v_i\}}$.
\begin{equation}
\label{copyothervertex_max-ing}
D(v_i) \leq k\delta_i - \left(\sum_{v_j \in N(v_i)} \mathcal{F}_{i,k}(v_j)\right) -1,
\end{equation}
which follows from the fact that $(D-Q\mathcal{F}_{i,k})(v_i) <0$ by choice of $\mathcal{F}_{i,k}$.  By the previous corollary, to show that $\deg(D)=g_{max}-1$, it is enough to show that each of the inequalities from  (\ref{copyothervertex_max-ing}) hold with equality.

For the vertex $v_0$, we know that for all $1 \leq k \leq r_0$,

$$k\delta_0 - \sum_{v_j \in N(v_0)} \mathcal{F}_{0,k}(v_j) \geq 0,$$

this follows from the choice of $D$ and the second assertion of Lemma~\ref{dhar_alg_thm}.  Because $D$ is extreme, by (ii) of Lemma~\ref{extreme_reduced_effective lem} we have that $D(v_0)< 0$. Hence for all $1 \leq k \leq r_0$,
\begin{equation}
\label{copyv_0max-inq}
  D(v_0) \leq k\delta_0 - \left(\sum_{v_j \in N(v_0)} \mathcal{F}_{0,k}(v_j)\right)-1.
\end{equation}

By assumption all of the inequalities for $v_0$ above hold with equality. So
take $v_i \in V(\vec{G})\setminus v_0$ and $1\leq k \leq r_i$. For finishing the proof, we will show that
$(D- Q(\mathcal{F}_{i,k}))(v_i)=-1$. Let the firing strategy $f$ be such that
$D - Qf$ is $v_i$-reduced and $f(v_i)=k$, where the existence of $f$ is guaranteed by Corollary~\ref{unique_firing_reduced_cor}.
Assume $f' \approx f$ is a natural firing strategy. Let $f_t$'s be the sequence of firing strategies obtained from a run of the generalized Dhar's algorithm on $D$. Take $j$ as large as possible such that $f_j \geq f'$.
Let $v \in V(\vec{G})$ be such that $f_{j+1}=f_j-\chi_{\{v\}}$. Let the firing strategy $f''$ be such that $f'=f_j-f''$ where $f'' \geq \zerov$ and $f''(v)=0$. We claim that $v=v_i$.  If $v \notin \{v_0,v_i\}$  then $(D-Qf')(v)=(D'-Q(f_j-f''))(v)\leq (D-Q(f_j))(v)<0$,
contradicting the fact that $D-Qf'$ is a $v_i$-reduced. If $v=v_0$, then $(D-Qf')(v_0)=(D-Q(f_j-f''))(v_0) \leq (D-Q(f_j))(v_0)=-1$ since  $D-Qf_j$ is a  $v_0$-reduced divisor by the second part of Theorem~\ref{dhar_alg_thm}. But this again contradicts the
fact that $D-Qf'$ is a $v_i$-reduced divisor. Hence $v=v_i$ and this finishes the proof of the claim. Therefore $f_j=\mathcal{F}_{i,k}$ and we have: $$-1=(D-Qf')(v_i)=(D-Q(f_j-f''))(v_i)=(D-Q(\mathcal{F}_{i,k}-f''))(v_i)\leq
(D-Q(\mathcal{F}_{v_i,k}))(v_i) \leq-1.$$ Hence $(D-Q(\mathcal{F}_{i,k}))(v_i)=-1$ as desired.
}
\end{proof}
We note that a more general version of the previous theorem can be stated for strongly connected directed graphs and might have been included in the section on Dhar's algorithm, but because we do not have statement like Corollary \ref{g=g_0cor} for all strongly connected directed graphs, the statement of this more general theorem would have been awkwardly phrased.

\begin{theorem}
{
\label{g=g_0canonical_thm}
Let $K = (\delta_0-2, ... , \delta_n-2)$ be a
vector in $\Z^{n+1}$. If $g_{\max}=g_0$ then $D \in \mathcal{N}$ if and only if $K-D \in \mathcal{N}$.
}
\end{theorem}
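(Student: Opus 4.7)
The plan is to prove the forward implication $D \in \mathcal{N} \Rightarrow K - D \in \mathcal{N}$; the reverse direction then follows because $D \mapsto K - D$ is an involution on $\Z^{n+1}$. The degree of $K - D$ is immediate: by the definition $2g_0 - 2 = \sum_i r_i(\delta_i - 2)$ and the hypothesis $g_{\max} = g_0$ we have $\deg_R(K) = 2g_{\max} - 2$, giving $\deg_R(K - D) = g_{\max} - 1$.

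The substantive claim is that $K - D$ belongs to $Ext(\Sigma(\L_{(G,R)}))$. By Lemma~\ref{extreme_reduced_effective lem}(ii) this requires two things: (a) that $K - D$ is not equivalent to an effective divisor, and (b) that for each vertex $v_i$ there exists a $v_i$-reduced $D^* \sim K - D$ with $D^*(v_i) = -1$. For (b), I fix $v_i$ and let $\hat D$ be a $v_i$-reduced representative of $D$; the preceding theorem gives $\hat D(v_i) = -1$, and by Corollary~\ref{g=g_0cor} the generalized Dhar run on $\hat D$ with base $v_i$ produces tight equalities $(\hat D - Q\mathcal{F}_{u,k})(u) = -1$ for every $u$ and every $k \in \{1, \ldots, r_u\}$. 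My candidate representative is $D^* := K - \hat D - Qh$, where $h$ is the firing vector obtained by reducing $K - \hat D$ with respect to $v_i$ via the level-by-level procedure of Lemma~\ref{reduce_exist_lemma}. Since $(K - \hat D)(v_i) = \delta_i - 1$, the desired equality $D^*(v_i) = -1$ is equivalent to the key identity $(Qh)(v_i) = \delta_i$.

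To verify $(Qh)(v_i) = \delta_i$ I plan to match the cumulative firings in the reduction procedure against the tight Dhar data, running a telescoping sweep through the distance levels $A_1, \ldots, A_d$ from $v_i$: at each level the Dhar tight equations tell us exactly how many chips the vertices in that level need from the previous level, and these local balances assemble into the global identity. Once (b) holds at every vertex, (a) follows because the $v_0$-reduced representative constructed for $v = v_0$ has $v_0$-value $-1$ and so is not effective; combining this with Corollary~\ref{unique_firing_reduced_cor} (which enumerates all $r_0$ $v_0$-reduced representatives) and applying the same tightness argument to each of them forces every $v_0$-reduced representative of $K - D$ to have $v_0$-value $-1$, so by Lemma~\ref{extreme_reduced_effective lem}(i) no $v_0$-reduced representative is effective and $K - D \not\sim$ effective.

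The hard part will be the telescoping identity $(Qh)(v_i) = \delta_i$ for the reduction of $K - \hat D$: the Dhar tight equations are statements about $\hat D$, and they need to be repackaged into statements about the firings used to $v_i$-reduce $K - \hat D$. I expect to do this by induction on the distance from $v_i$, using the identity $\sum_{v_j \in N(v)} r_j = r_v \delta_v$ that comes from $QR = 0$ to transport the tight equations from one level to the next. The hypothesis $g_{\max} = g_0$ is what makes every Dhar inequality an equality (Corollary~\ref{g=g_0cor}); without it the telescoping would have slack equal to the gap $g_0 - g_{\max}$, and the conclusion $K - D \in \mathcal{N}$ would correspondingly fail.
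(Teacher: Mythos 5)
Your setup (degree count, reduction to showing $K-D$ is extreme, use of the preceding theorem and Corollary~\ref{g=g_0cor}) is fine, but the proposal leaves the entire technical content of the theorem unproved. Your ``key identity'' $(Qh)(v_i)=\delta_i$ is not an independent handle on the problem: since $D^*=K-\hat D-Qh$ and $(K-\hat D)(v_i)=\delta_i-1$, the identity is literally a restatement of the goal $D^*(v_i)=-1$, so nothing has been gained by the reformulation. The actual work --- converting the tight Dhar data for $\hat D$ (equalities about the strategies $\mathcal{F}_{u,k}$, which decrease from $R$ to $\zerov$ with the base vertex held back) into control of the firing vector $h$ that reduces the \emph{complementary} divisor $K-\hat D$ --- is only announced as a ``telescoping sweep through distance levels,'' with no argument. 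Moreover the $h$ produced by the procedure of Lemma~\ref{reduce_exist_lemma} is not canonical and bears no a priori relation to the $\mathcal{F}_{u,k}$, so it is not clear the sweep you envision can even be set up; and your derivation of (a) from (b) (``applying the same tightness argument to each of the $r_0$ reduced representatives'') rests on the same unproved step. As it stands, the proposal is a plan, not a proof.

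For comparison, the paper resolves exactly this point by working with the explicit complementary strategies: from Corollary~\ref{g=g_0cor} one has $\sum_{v_j\in N(v_i)}\mathcal{F}_{i,r_i+1-k}(v_j)=(r_i+1-k)\delta_i-D(v_i)-1$, and plugging this into the divisor defined by the strategies $R-\mathcal{F}_{i,r_i+1-k}$ (together with $\sum_{v_j\in N(v_i)}r_j=r_i\delta_i$) shows directly that this divisor equals $K-D$. Non-effectivity is then proved by the reverse sequence $f'_t=R-f_{\onev\cdot R-t}$: for any natural $f'$ with $K-D-Qf'$ effective, choosing $t$ maximal with $f'_t\geq f'$ produces a coordinate where the value is at most $-1$, a contradiction. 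Note also that your route is logically heavier than needed: once $\deg_R(K-D)=g_{\max}-1$ and non-equivalence to an effective divisor are known, extremality is automatic (every element of $\Sigma(\L)$ is dominated by an extreme divisor of degree at most $g_{\max}-1$, by Lemma~\ref{dominate_by_extreme_lem}, and $R>\zerov$ forces equality), so your condition (b) need not be verified vertex by vertex at all. I recommend either carrying out your telescoping argument in full --- which will in effect reconstruct the reversed Dhar sequence --- or adopting the paper's computation.
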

\begin{proof}
{
Without loss of generality, we may assume $D$ is a $v_0$-reduced divisor. Apply the generalized
Dhar's algorithm on $D$ and let $f_i$ be the output sequence. Let $\mathcal{F}_{i,k}$ be the firing strategies defined in the proof of Theorem~\ref{gmaxg_0}.

Define the divisor $D'$ such that for all $0 \leq i \leq n$, $$D'(v_i)= k\delta_i-\left(\sum_{v_j \in
N(v_i)} (R-\mathcal{F}_{i,r_i+1-k})(v_j)\right)-1.$$
We claim that $D'$ is well-defined. For proving the claim, it is enough to show that for all $0 \leq i \leq n$, the value of $D'(v_i)$ does not depend upon $k$. We will show $D'=K-D$.

Since $g_{\max}=g_0$, Corollary~\ref{g=g_0cor} implies that for all $0 \leq i \leq n$, $\sum_{v_j \in
N(v_i)} \mathcal{F}_{i,r_i+1-k}(v_j)=(r_i+1-k)\delta_i-D(v_i)-1$. For all $0 \leq i \leq n$, we have:
$$\sum_{v_j \in
N(v_i)} (R-\mathcal{F}_{i,r_i+1-k})(v_j)=\left(\sum_{v_j \in
N(v_i)}r_j\right)-\left((r_i+1-k)\delta_i-D(v_i)-1\right)=-\delta_i+k\delta_i+D(v_i)+1.$$
Therefore,
$$D'(v_i)= k\delta_i-\left(\sum_{v_j \in
N(v_i)} (R-\mathcal{F}_{i,r_i+1-k})(v_j)\right)-1=k\delta_i-(-\delta_i+k\delta_i+D(v_i)+1)-1=\delta_i-2-D(v_i).$$
Since $\deg_R(K-D)=g_0-1$, for finishing the proof we only need to show that $K-D$ is not equivalent to an effective divisor.

Assume to the contrary that $D'$ is equivalent to some effective divisor $E$ and let $f$ be such that $D'-Qf=E$. Let $f' \approx
 f$ be a natural firing strategy guaranteed by Lemma~\ref{natural_lemma}.
Define a ``reverse sequence'' of firing strategies $f'_i=R-f_{\onev \cdot R-i}$ for all $0 \leq i \leq \onev \cdot R$. Take $t$ as large as possible such that $f'_t \geq f'$. So there exists $v_i \in V(\vec{G})$ such that $f'(v_i)=f'_t(v_i)$. By the definition of the reverse sequence, there exists $1 \leq k \leq r_i$ such that $f'_t=R-\mathcal{F}_{i,r_i+1-k}+\chi_{\{v_i\}}$. Therefore,
$$E(v_i) \leq (D'-Qf'_t)(v_i)$$$$=k\delta_i-\left(\sum_{v_j \in
N(v_i)} (R-\mathcal{F}_{i,r_i+1-k})(v_j)\right)-1-\left(r_i-(r_i+1-k)-1\right)\delta_i+\left(\sum_{v_j \in
N(v_i)} (R-\mathcal{F}_{i,r_i+1-k} + \chi_{\{v_i\}})(v_j)\right)$$
$$=k\delta_i-\left(r_i-(r_i+1-k)-1\right)-1=-1.$$
Note that $\sum_{v_j \in
N(v_i)} (R-\mathcal{F}_{i,r_i+1-k} + \chi_{\{v_i\}})(v_j)=\sum_{v_j \in
N(v_i)} (R-\mathcal{F}_{i,r_i+1-k})(v_j)$.
This contradicts the choice of $E$. Hence $D'=K-D$ is not equivalent to an effective divisor.
}
\end{proof}
We should mention that Theorem~\ref{gmaxg_0} and Theorem~\ref{g=g_0canonical_thm} are due to Lorenzini~\cite{Lor09}. His approach in proving these theorems is purely algebraic. As mentioned in~\cite{Lor09}, he was interested in combinatorial proof of these facts which could be the one presented in this paper.

\begin{theorem}
{
\label{g_0g_ming_max_equal}Let $(G,R)$ be an arithmetical graph. If $g_0=g_{\min}=g_{\max}$, then $(G,R)$ has the Riemann-Roch property.  Moreover, the corresponding directed graph has the natural Riemann-Roch property.
}
\end{theorem}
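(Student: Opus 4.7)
The plan is to combine the hypothesis $g_{\min}=g_{\max}=g_0$ with Theorem~\ref{g=g_0canonical_thm} to exhibit an involution on the extreme divisors that collapses the Riemann-Roch inequality to an equality, and then to transport the canonical divisor to $\vec{G}_R$ via the $\mathcal{R}$-scaling correspondence.

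First, set $g=g_0=g_{\min}=g_{\max}$. Uniformity forces $\deg_R(\nu)=g-1$ for every $\nu\in Ext(\Sigma(\L_{(G,R)}))$, so $\mathcal{N}=Ext(\Sigma(\L_{(G,R)}))$. Take $K=(\delta_0-2,\dots,\delta_n-2)$. Theorem~\ref{g=g_0canonical_thm} then says $\phi:\nu\mapsto K-\nu$ is an involution on $Ext(\Sigma(\L_{(G,R)}))$. For any $D\in\Z^{n+1}$ and any extreme $\nu$, I would compute
\begin{equation*}
\deg^+_R(D-\nu)-\deg^+_R(K-D-\phi(\nu))=\deg^+_R(D-\nu)-\deg^+_R(\nu-D)=\deg_R(D)-(g-1),
\end{equation*}
using the identity $\deg^+_R(x)-\deg^+_R(-x)=\deg_R(x)$ and the uniform value $\deg_R(\nu)=g-1$. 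Because this difference is constant in $\nu$, Lemma~\ref{bijection_min_lem} applied with $c_1=c_2=\deg_R(D)-g+1$ together with Lemma~\ref{rank_degree_plus_lem} yields $r(D)-r(K-D)=\deg_R(D)-g+1$. Since $\deg_R(K)=\sum r_i(\delta_i-2)=2g_0-2$, this is exactly the Riemann-Roch property for $(G,R)$ with canonical divisor $K$.

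Next I would transfer the canonical divisor to $\vec{G}_R$. Because $\vec{Q}_R^T=\mathcal{R}Q$, the row lattice of $\vec{G}_R$ equals $\mathcal{R}\L_{(G,R)}$, so by Theorem~\ref{thm:RR_R_one} the lattice $\L_{\vec{G}_R}$ inherits the Riemann-Roch property. To pin down the canonical divisor in the natural form, I would combine Lemma~\ref{lem:extremesigmaclosure_R_one} with Theorem~\ref{extreme_sigma_closure_sigma_thm} to see that extreme divisors transport as $\nu\mapsto\mathcal{R}\nu+R-\onev$, and then observe that $\phi$ conjugates to the involution $\nu'\mapsto K^*-\nu'$ on $Ext(\Sigma(\L_{\vec{G}_R}))$ with
\begin{equation*}
K^*=\mathcal{R}K+2R-2\onev,\qquad K^*(v_i)=r_i(\delta_i-2)+2r_i-2=r_i\delta_i-2.
\end{equation*}
The same constancy argument as above, now in the target lattice, shows that $K^*$ is canonical for $\vec{G}_R$; alternatively one may simply invoke Corollary~\ref{cor:canonical_1_R} to check that $\mathcal{R}^{-1}(K^*+2\onev)-2\onev=K$ recovers the canonical divisor of $(G,R)$.

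Finally, I would identify $K^*$ as the natural canonical divisor. In $\vec{G}_R$, each edge $(v_i,v_j)$ of $G$ is replaced by $r_j$ edges from $v_i$ to $v_j$, so $\deg^+(v_i)=\sum_{v_j\in N(v_i)}r_j$; the arithmetical relation $QR=0$ reads exactly $\sum_{v_j\in N(v_i)}r_j=r_i\delta_i$, giving $\deg^+(v_i)=r_i\delta_i$ and hence $K^*(v_i)=\deg^+(v_i)-2$, which is Definition~\ref{def:natural_RR}. The only real subtlety in the whole plan is keeping track of how the involution $\nu\mapsto K-\nu$ conjugates under the $\mathcal{R}$-scaling so that the transferred canonical divisor comes out in the natural form; everything else is a direct unpacking of Theorem~\ref{g=g_0canonical_thm} and the results in Section~\ref{lat_sec}.
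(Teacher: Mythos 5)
Your proposal is correct and follows essentially the same route as the paper: the involution $\nu\mapsto K-\nu$ supplied by Theorem~\ref{g=g_0canonical_thm}, together with uniformity from $g_{\min}=g_{\max}=g_0$, gives the Riemann-Roch formula with $K=(\delta_0-2,\dots,\delta_n-2)$, and the $\mathcal{R}$-scaling results of Section~\ref{lat_sec} transfer the canonical divisor to $\vec{G}_R$ in the natural form $\deg^+(v_i)-2=r_i\delta_i-2$. The only difference is one of packaging: the paper cites Theorem~\ref{RR_formula_equiv_U_RI_thm} and Corollary~\ref{cor:canonical_1_R} wholesale, whereas you inline the underlying minimization argument (Lemmas~\ref{bijection_min_lem} and~\ref{rank_degree_plus_lem}) and compute $K^*=\mathcal{R}K+2R-2\onev$ explicitly, which is if anything more self-contained.
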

\begin{proof}
{
The first part of the theorem follows as an immediate consequence of Theorem~\ref{RR_formula_equiv_U_RI_thm} and Theorem~\ref{g=g_0canonical_thm}.  The second part of the theorem follows by Corollary \ref{cor:canonical_1_R}, which in this context says that if $g_0=g_{\min}=g_{\max}$, then the canonical divisor for the corresponding digraph $\vec{G_R}$ has $i$th entry $\deg^+(v_i)-2$, i.e., $\vec{G_R}$ satisfies Definition \ref{def:natural_RR} for the row chip-firing game.  Moreover, we note that $(\delta_0-2, ... , \delta_n-2) \sim (\deg(v_0)-2, ..., \deg(v_n)-2)$ as is easily observed by computing $Q \vec 1$.
}
\end{proof}

\begin{corollary}
{
\label{unique_extreme} Let $(G,R)$ be an arithmetical graph. If $\L_{(G,R)}$ has a unique class of extreme divisors, i.e. $Ext(\Sigma(\L_{(G,R)}))=\{\nu+\l :\l \in \L_{(G,R)}\}$, then $\L_{(G,R)}$ has the Riemann-Roch property.
}
\end{corollary}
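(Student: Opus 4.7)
The plan is to verify that $\L := \L_{(G,R)}$ is both uniform and reflection invariant, and then invoke Theorem~\ref{RI_equiv_RR_prop_thm} to conclude the Riemann-Roch property. Uniformity comes essentially for free: since $\L \subseteq \L_R$, every pair of divisors in a common $\L$-orbit shares the same $\deg_R$. Thus the hypothesis $Ext(\Sigma(\L)) = \{\nu + \ell : \ell \in \L\}$ immediately forces $g_{\min} = g_{\max}$.

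The heart of the argument is reflection invariance. I will exploit the fact that $\L$ acts freely and transitively on $Ext(\Sigma(\L))$ by defining the negation map $\phi(\nu + \ell) := \nu - \ell$. This is a well-defined bijection of $Ext(\Sigma(\L))$ onto itself satisfying the symmetry $\phi(\mu) + \mu = 2\nu$ for every extreme divisor $\mu$. To transfer this symmetry over to $Crit(\L)$, I will compose with the bijection $\varphi : Ext(\Sigma(\L)) \to Crit(\L)$ of Corollary~\ref{extreme_L_ciritical_cor}, defined by $\varphi(\mu) = \pi(\mu + \onev)$. The key observation is that $\pi$ is $\R$-linear (as a projection along $R$), so the induced bijection $\eta := \varphi \circ \phi \circ \varphi^{-1}$ of $Crit(\L)$ satisfies
$$\eta(c) + c = \pi\bigl(\phi(\varphi^{-1}(c)) + \varphi^{-1}(c) + 2\onev\bigr) = \pi(2\nu + 2\onev) = 2\varphi(\nu),$$
a constant independent of $c$. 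This gives $-Crit(\L) = Crit(\L) - 2\varphi(\nu)$, i.e., reflection invariance of $\L$.

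Having established both uniformity and reflection invariance, Theorem~\ref{RI_equiv_RR_prop_thm} will close the proof. There is no serious obstacle in this argument: the unique-orbit hypothesis essentially hands us the desired involution $\phi$, and the linearity of the projection $\pi$ does all of the work in transporting the symmetry from $Ext(\Sigma(\L))$ to $Crit(\L)$. The only step requiring any attention is the elementary verification that $\pi(x+y) = \pi(x) + \pi(y)$ used in the displayed equation, which is immediate from $\pi(x) = x - (x \cdot R / \|R\|^2) R$. It is also worth noting that once the canonical divisor $K$ is extracted from this argument via the recipe in the proof of Theorem~\ref{reflection_invariant_inequality} (namely $K = \mu + \phi(\mu) = 2\nu$ for any $\mu \in Ext(\Sigma(\L))$), the equality $\deg_R(K) = 2(g-1)$ is automatic from uniformity, so no additional work is needed to recover the full Riemann-Roch identity of Definition in the uniform case.
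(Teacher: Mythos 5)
Your proof is correct, and it is exactly the intended derivation: the paper states Corollary~\ref{unique_extreme} without proof, but the implicit route is the one you give — uniformity is immediate since all members of a single $\L$-class share the same $\deg_R$, and the single orbit yields reflection invariance via the negation map transported through $\varphi$ (the same device, with $v=-2\pi(\nu+\onev)$, that the paper uses explicitly for its two-orbit reflection-invariance claim in the final example), after which Theorem~\ref{RI_equiv_RR_prop_thm} closes the argument. No gaps; the linearity of $\pi$ is indeed the only point needing verification and you address it.
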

\subsection{Arithmetical Graphs with the Riemann-Roch Property}
\begin{theorem}
{
\label{g_0_less_than_one} Let $(G,R)$ be an arithmetical graph. If $g_0\leq 1$ then $(G,R)$ has the Riemann-Roch property.
}
\end{theorem}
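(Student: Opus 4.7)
The plan is to exploit Theorem~\ref{gmaxg_0} (giving $g_{\max}\le g_0\le 1$), splitting the argument on whether $g_{\max}$ attains $g_0$.

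\textbf{Case 1:} $g_{\max}=g_0\in\{0,1\}$. The plan is to show $g_{\min}=g_{\max}$ and then apply Theorem~\ref{g_0g_ming_max_equal}. For any extreme divisor $\nu$ and any vertex $v$, Lemma~\ref{extreme_reduced_effective lem}(ii) produces a $v$-reduced $D_v\sim\nu$ with $D_v(v)=-1$ and $D_v(w)\ge 0$ for $w\ne v$, so that $\deg_R(\nu) = -r_v + \sum_{w \ne v} r_w D_v(w) \ge -r_v$. When $g_0=0$, combined with the observation that every arithmetical graph with $g_0\le 1$ has some $r_i=1$ (a structural claim to be verified separately using the arithmetical identities $r_i\delta_i=\sum_{v_j\sim v_i}r_j$ together with $\sum r_i(\delta_i-2)\le 0$), this yields $g_{\min}\ge 0 = g_{\max}$, hence $g_{\min}=g_{\max}=0=g_0$. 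When $g_0=1$, I would further argue that an extreme $\nu$ of degree $-1$ would force, for each vertex $v$ with $r_v=1$, an equivalence $\nu\sim -e_v$ (since the inequality $\deg_R(\nu_v)\ge -r_v$ is tight only when $D_v(w)=0$ for all $w\ne v$); the existence of two distinct such vertices whose difference $e_{v_0}-e_{v_1}$ is not in $\L_{(G,R)}$ (which holds when the associated sandpile group is nontrivial, as is the norm when $g_0=1$) gives the contradiction $-e_{v_0}\sim -e_{v_1}$.

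\textbf{Case 2:} $g_{\max}<g_0$, forcing $g_{\max}\le 0$. Here I would apply Lemma~\ref{dominate_by_extreme_lem}(ii) to conclude that every divisor $D\in\Sigma(\L_{(G,R)})$ has $\deg_R(D)\le g_{\max}-1\le -1$, so every $D$ with $\deg_R(D)\ge 0$ is equivalent to an effective divisor. Using again the structural claim $\min_i r_i=1$, the numerical semigroup $S_R=\{\sum a_ir_i:a_i\in\Z_{\ge 0}\}$ equals $\Z_{\ge 0}$, and the rank function simplifies to $r(D)=\max\{-1,\deg_R(D)\}$. I would then verify the Riemann-Roch formula $r(D)-r(K-D)=\deg_R(D)-g_{\max}+1$ for $K$ any divisor of degree $2g_{\max}-2$ by case analysis on whether each of $\deg_R(D)$ and $\deg_R(K-D)$ is negative.

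The main obstacle is ruling out extreme divisors of degree $-1$ in Case 1 with $g_0=g_{\max}=1$; the argument sketched above covers typical situations, but in edge cases where the sandpile group is trivial one may need to replace the equivalence-based contradiction with a more delicate averaging argument using Corollary~\ref{g=g_0cor}, comparing a hypothetical low-degree extreme against a tight run of the generalized Dhar's algorithm on an element of $\mathcal{N}$. A secondary technical challenge is establishing the structural claim $\min_i r_i=1$ for arithmetical graphs with $g_0\le 1$, which should follow from a direct argument on $(G,R)$ using the inequality $\sum_{i<j}m_{ij}(r_i+r_j)\le 2\sum_i r_i$.
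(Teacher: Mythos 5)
Your proposal has two genuine gaps, and both occur at exactly the points the paper's own argument is engineered to avoid. First, the structural claim that every arithmetical graph with $g_0\le 1$ has a vertex of multiplicity $1$ is load-bearing in both of your cases (it gives $g_{\min}\ge 0$ in Case 1 and the rank formula $r(D)=\max\{-1,\deg_R(D)\}$ in Case 2), but it is never proved, and it does not follow from the inequality $\sum_{i<j}m_{ij}(r_i+r_j)\le 2\sum_i r_i$ alone: a cycle satisfies that inequality with equality for \emph{any} multiplicity vector, so any proof must exploit the divisibility constraints $r_i\delta_i=\sum_{v_j\sim v_i}r_j$ in an essential way, and that is a nontrivial argument you have not supplied. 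Second, the hardest step---ruling out an extreme class of degree $-1$ when $g_{\max}=g_0=1$---is handled only "in typical situations": you need two distinct multiplicity-one vertices $v_0,v_1$ with $e_{v_0}-e_{v_1}\notin\L_{(G,R)}$, and neither the existence of a second such vertex nor the non-membership is justified (nontriviality of the sandpile group does not give $e_{v_0}-e_{v_1}\notin\L_{(G,R)}$), while the promised fallback via Corollary~\ref{g=g_0cor} is not carried out. A smaller point: in Case 2 your verification of the formula $r(D)-r(K-D)=\deg_R(D)-g_{\max}+1$ only closes because necessarily $g_{\max}=0$ there (since $\gcd(r_i)=1$ produces degree $-1$ divisors, which lie in $\Sigma(\L_{(G,R)})$ and are dominated by extreme divisors, so $g_{\max}\ge 0$); as written you only have $g_{\max}\le 0$.

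The paper's proof sidesteps both issues with two devices you do not use. It fixes $v_0$ to be a vertex of \emph{minimal} multiplicity and looks at an extreme $v_0$-reduced divisor $D$ with $D(v_0)=-1$; since $\deg_R(D)\le g_{\max}-1\le g_0-1\le 0$ and all other entries are nonnegative with $r_i\ge r_0$, either $D=-\chi_{\{v_0\}}$ or $D$ has exactly one positive entry, equal to $1$, at a vertex of multiplicity $r_0$. In the first case $D$ is the \emph{unique} extreme $v_0$-reduced divisor (by Lemma~\ref{extreme_reduced_effective lem}(ii) and minimality of $r_0$), and Corollary~\ref{unique_extreme} gives the Riemann--Roch property directly---this is the tool that covers precisely the situations you cannot reach, including $g_{\max}<g_0$ and the "only one multiplicity-one vertex / trivial group" edge cases, with no need for $g_{\min}=g_{\max}=g_0$ or for the rank computation of your Case 2. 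In the second case $\deg_R(D)=0$ and $-\chi_{\{v_0\}}$ is not extreme, so every extreme class has degree exactly $0$, giving $g_{\min}=g_{\max}=g_0=1$ and Theorem~\ref{g_0g_ming_max_equal}. Unless you either prove your structural claim and supply a complete argument for the degree $-1$ exclusion, or import something playing the role of Corollary~\ref{unique_extreme}, the proof as proposed is incomplete.
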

\begin{proof}
{
Let $v_0$ be a vertex such that $r_0 \leq r_i$ for all $1 \leq i \leq n$. Let $D$ be an extreme $v_0$-reduced divisor with $D(v_0)=-1$. By Theorem~\ref{gmaxg_0} $g_{\max} \leq g_0$, so $\deg(D) \leq g_{\max}-1 \leq 0$. Now we have two cases:
\begin{itemize}
\item[(i)] $D(v_i)=0$ for all $1 \leq i \leq n$, part (ii) of Lemma~\ref{extreme_reduced_effective lem} and the choice of $r_0$ implies that $D$ is the unique extreme $v_0$-reduced divisor, and the assertion of the lemma holds by Corollary~\ref{unique_extreme}. Note that in this case $g_{\max} \neq g_0$ unless $g_0=0$ and $r_0=1$.

\item[(ii)] There exists $1 \leq i \leq n$ such that $D(v_i) > 0$. Since $\deg(D) \leq 0$, $r_i=r_0$ and $v_i$ is the only vertex with $D(v_i)>0$. This implies that the divisor $D'$ with $D'(v_0)=-1$ and $D'(v_j)=0$ for all $1 \leq j \leq n$ is not an extreme divisor. Hence, $g_0=g_{\min}=g_{\max}=1$, and assertion of the lemma follows by Theorem~\ref{g_0g_ming_max_equal}.
	
\end{itemize}
}
\end{proof}
Using the definition of $g_0$ the following is immediate consequence of the Theorem~\ref{g_0_less_than_one}.
\begin{corollary}
{
Let $(G,R)$ be an arithmetical graph with all $\delta_i$'s equal to two or all $\deg(v_i)$'s equal to two. Then $(G,R)$ has the Riemann-Roch property.
}
\end{corollary}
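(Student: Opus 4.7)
The plan is to reduce this to Theorem~\ref{g_0_less_than_one} by computing $g_0$ directly in each of the two hypothesized cases, using the two equivalent expressions for $2g_0-2$ recorded just before Theorem~\ref{gmaxg_0}.

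Recall that the paper establishes $2g_0 - 2 = \sum_{i=0}^n r_i(\delta_i - 2)$ and, after summing the relations $QR = \zerov$ appropriately (equivalently, by firing every vertex of $G$), also $2g_0 - 2 = \sum_{i=0}^n r_i(\deg(v_i) - 2)$. First, assume all $\delta_i = 2$. Then each summand in $\sum_{i=0}^n r_i(\delta_i - 2)$ vanishes, so $2g_0 - 2 = 0$, giving $g_0 = 1$. Second, assume all $\deg(v_i) = 2$. Then each summand in $\sum_{i=0}^n r_i(\deg(v_i) - 2)$ vanishes, again yielding $g_0 = 1$. In either case $g_0 \leq 1$, and Theorem~\ref{g_0_less_than_one} immediately implies that $(G,R)$ has the Riemann-Roch property.

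There is really no obstacle here; the whole content is the observation that both hypotheses force the weighted Euler characteristic $2g_0 - 2$ to be zero. The only thing to be a little careful about is to invoke the correct one of the two formulas for $2g_0 - 2$ for each hypothesis, since a priori $\delta_i$ and $\deg(v_i)$ need not coincide on an arithmetical graph (they do coincide when $R = \onev$).
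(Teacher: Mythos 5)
Your proposal is correct and matches the paper's intent exactly: the paper presents this as an immediate consequence of Theorem~\ref{g_0_less_than_one}, using the two equivalent formulas $2g_0-2=\sum_{i=0}^n r_i(\delta_i-2)=\sum_{i=0}^n r_i(\deg(v_i)-2)$ to see that either hypothesis forces $g_0=1\leq 1$. Your care in invoking the appropriate formula for each hypothesis is the only substantive point, and you handled it correctly.
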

The former arithmetical graphs are those coming from the connection between Lie algebras or elliptical curves which have been classified~\cite{CSM95} and the latter arithmetical graphs where the underlying graph is a cycle.
The following two examples show that both cases described in the proof of Theorem~\ref{g_0_less_than_one} occur.
\begin{example}
{
Let $(G,R)$ be an arithmetical graph where $G$ is the even cycle $v_0, \dots, v_{2n-1}$ for $n \geq 2$, and for all $0 \leq i \leq n-1$, the multiplicities of the vertices $v_{2i}$ and $v_{2i+1}$ are $1$ and $2$, respectively. Then $g_{\min}=g_{\max}=g_0=1$, and in particular $(G,R)$ has the Riemann-Roch property.
}
\end{example}
\begin{proof}
{
We claim that the set of extreme $v_0$-reduced divisors for $(G,R)$ are the set of divisors $D_i=\chi_{\{v_{2i}\}}
-\chi_{\{{v_0}\}}$ for all $1\leq i \leq n-1$.
Assume $1 \leq i \leq n-1$, and the vector $f$ is a valid firing strategy with respect to $v_0$ such that $D_i-Qf \geq {\zerov}$.
Observe that if $f(v_{2i})=1$, then in order to $(D_i-Qf)(v_{2i}) \geq 0$ we must have $f(v_{2i-1})+f(v_{2i-1}) \geq 3$. By symmetry, assume that $f(v_{2i-1}) \geq 2$. Since $(D_i-Qf)(v_{2i-1}) \geq 0$, we have $f(v_{2i-2}) =1$. By repeating the argument, we conclude that $f(v_0)=1$, a contradiction. This shows that $D_i$ is $v_0$-reduced and since $r_0=1$, (i) of Lemma~\ref{extreme_reduced_effective lem} implies that $D_i$ is not equivalent to an effective divisor. For proving the fact that $D_i$ is an extreme divisor, it is enough to show that
$D_i+\chi_{\{v_j\}}$ is equivalent to an effective divisor, for all $0\leq j \leq 2n-1$.

It is easy to see that $g_0=1$. If $0 \leq j \leq 2n-1$ is odd, then the divisor $D_i+\chi_{\{v_j\}}$ has degree $2 > g_0$, thus Theorem~\ref{gmaxg_0} implies that $D_i+\chi_{\{v_j\}}$ is effective. We claim that for all $0 \leq j \leq i \leq n-1$, the divisor $D_i+\chi_{\{v_{2j}\}}$ is equivalent effective. We prove the claim by induction on $j$. If $j=0$, then the assertion of the claim trivially holds. So, assume $j >0$ and let $f=\chi_{\{v_{2j-1},\dots,v_{2i+1}\}}$. A simple computation gives that $D_i+\chi_{\{v_{2j}\}}-Qf=D_{i+1}+\chi_{\{v_{2j-2}\}}$. The induction hypothesis implies that $D_{i+1}+\chi_{\{v_{2j-2}\}}$ is equivalent an effective divisor, so is $D_{i+1}+\chi_{\{v_{2j}\}}$. This shows that $D_i$'s are extreme $v_0$-reduced divisors.

Now assume that $D$ is an extreme $v_0$-reduced divisor. Part (ii) of Lemma~\ref{extreme_reduced_effective lem} implies that $D(v_0)=-1$. If $D(v_{2i+1})=1$ for some $0 \leq i \leq n-1$, then $D$ is not a $v_0$-reduced divisor. The above argument shows that if $D(v_{2i})=2$ or $D(v_{2i})=D(v_{2j})=1$ for some $0 \leq i \ne j \leq n-1$, the divisor $D$ is equivalent to an effective divisor. Obviously $D \neq -\chi_{\{v_0\}}$, and this completes the proof of the claim.

Since each extreme $v_0$-reduced divisor $D_i$, $1\leq i \leq n-1$ has degree zero, $g_{\min}=g_{\max}=g_0$.  Theorem~\ref{g_0g_ming_max_equal} implies that $(G,R)$ has the Riemann-Roch property.
}
\end{proof}
\begin{example}
{
Let $(G,R)$ be an arithmetical graph where $G$ is a cycle $v_1, \dots, v_n$ for $n \geq 3$ and the multiplicity of vertex $v_i$ is $i$ for all $1 \leq i \leq n$. Then $(G,R)$ has Riemann-Roch property.
}
\end{example}
\begin{proof}
{
It is easy to see that $g_0=1$. Now assume $D$ is an extreme $v_1$-reduced divisor. The part (ii) of Lemma~\ref{extreme_reduced_effective lem} implies that $D(v_1)=-1$. If there exists $2 \leq i \leq n$ such that $D(v_i) \geq 1$, then degree of $D$ is at least one. Thus, Theorem~\ref{gmaxg_0} implies that $D$ is equivalent to an effective divisor. This shows that $D=-\chi_{\{v_1\}}$ is the unique extreme $v_1$-reduced divisor and the assertion of the lemma follows Corollary~\ref{unique_extreme}.
}
\end{proof}
The following example introduced in~\cite{Lor09} has the Riemann-Roch property.
\begin{example}
{
Let $(G,R)$ be an arithmetical graph where $G$ is a graph with vertex set $\{v_0,v_1\}$ such that $v_0$ is connected to $v_1$ with $r_0r_1$ edges where $r_0$ and $r_1$ are the multiplicity of the vertex $v_0$ and $v_1$, respectively. Then $(G,R)$ has the
Riemann-Roch property.
}
\end{example}
\begin{proof}
{
The proof follows from Corollary~\ref{unique_extreme}, since there exists a unique
extreme $v_0$-reduced divisor, $D=-\chi_{\{v_0\}}+(r^2_0-1)\chi_{\{v_1\}}$. Hence $g_{\min}=g_{\max}=g_{0}$.
}
\end{proof}
Given any two integers $r_0 > r_1$ we can recursively construct a decreasing sequence $r_i$'s where $r_{i+1}=\delta_ir_i-r_{i-1}$, $r_{i+1} < r_i$ and $\delta_i \in \Bbb N$ for all $i\geq 1$. We call such a sequence the {\it Euclidean sequence generated by $r_0$ and $r_1$}. Note that the Euclidean sequence generated by $r_0$ and $r_1$ is finite and it comes from a simple variation of Euclid's algorithm.

Let $(G,R)$ be an arithmetical graph. We define a {\it Euclidean chain leaving $v_0$ generated by $r_0$ and $r_1$} to be an induced path $C=v_0,v_1 \dots,v_n$ of length $n+1\geq 2$ in $G$ such that $\deg_G(v_{n})=1$ where the corresponding sequence of multiplicities, $r_0,r_1 \dots r_n$ is the Euclidean sequence generated by $r_0$ and $r_1$. Note that $r_n=gcd(r_i,r_{i+1})$ for all $0 \leq i \leq n-1$. If $v_0$, $r_0$ and $r_1$ are clear from the context, we may simply refer to the path as a {\it Euclidean chain}.

Lorenzini~\cite{Lor89} uses a slight variation of the Euclidean chain for building arithmetical graphs. We also use Euclidean chain to construct a arithmetical graph with the Riemann-Roch property.

A {\it Euclidean star generated by} $r_0$ and $r_1$ is an arithmetical graph $(G,R)$ with the {\it center} vertex $v_0$ with multiplicity $r_0$ and $r_0$ identical Euclidean chains leaving $v_0$ generated by $r_0$ and $r_1$. We call the vertex $v_0$ the {\it center vertex}. When $r_0$ and $r_1$ are clear from the context, we will simply say {\it Euclidean star}.

We will show that every Euclidean star generated by $r_0$ and $r_1$ with $gcd(r_0,r_1)=1$, has the Riemann-Roch property.

\begin{definition}
Let $r_0 > r_1$ be two positive integers with $gcd(r_0,r_1)=1$. Assume $r_0, r_1, \dots, r_m$ is the Euclidean sequence generated by $r_0$ and $r_1$.
Given a nonnegative integer $x$, we say $x$ has a good representation with respect to $r_0$ and $r_1$ if there exist $0 \leq t_i \leq \delta_i-1$, for all $1 \leq i \leq m$ such that ${x=\sum_{i=1}^m t_ir_i}$, and there exist no $1 \leq i<j \leq m$ such that $t_i=\delta_i-1$, $t_j=\delta_j-1$ and for all $i<k<j$, $t_k =\delta_k-2$.
\end{definition}
\begin{lemma}
{
\label{unique_representation_lem}
Let $r_0$ and $r_1$ be positive integers with $gcd(r_0,r_1)=1$. Given a nonnegative integer $x$, $x$ has a good representation with with respect to $r_0$ and $r_1$ if and only if $0 \leq x \leq r_0-1$. Moreover, if $0 \leq x \leq r_0-1$ such a representation is unique.
}
\end{lemma}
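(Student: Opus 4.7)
The plan is to prove this lemma by induction on the depth $m$ of the Euclidean sequence, strengthening the statement to make the induction close. The base case $m = 1$ is immediate: coprimality $\gcd(r_0, r_1) = 1$ forces $r_1 = 1$ and $\delta_1 = r_0$, so the good representations are singletons $(t_1)$ with $t_1 \in \{0, 1, \ldots, r_0 - 1\}$, and the evaluation map is tautologically a bijection onto $\{0, 1, \ldots, r_0 - 1\}$.

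For the inductive step I would apply the inductive hypothesis to the tail $r_1, r_2, \ldots, r_m$, which is itself a Euclidean sequence of depth $m-1$, yielding a bijection between good tail representations $(t_2, \ldots, t_m)$ and values $y \in [0, r_1 - 1]$. The engine of the argument is a telescoping identity obtained by iterating $\delta_i r_i = r_{i-1} + r_{i+1}$: for every $j \geq 2$ (with the convention $r_{m+1} = 0$),
\[
(\delta_1 - 2) r_1 + (\delta_2 - 2) r_2 + \cdots + (\delta_{j-1} - 2) r_{j-1} + (\delta_j - 1) r_j \;=\; r_0 - r_1 + r_{j+1}.
\]
The analogous identity for the tail implies that whenever a good tail representation begins with a ``half-bad prefix'' $(t_2, \ldots, t_j) = (\delta_2 - 2, \ldots, \delta_{j-1} - 2, \delta_j - 1)$ for some $j \geq 2$, its value satisfies $y \geq r_1 - r_2$.

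For existence, given $x \in [0, r_0 - 1]$, I would run the greedy algorithm: set $t_1 = \lfloor x/r_1 \rfloor \in [0, \delta_1 - 1]$ and let $(t_2, \ldots, t_m)$ be the good tail representation of $y := x - t_1 r_1 \in [0, r_1 - 1]$ furnished by induction. The concatenation can fail to be good only through a bad pattern with $i = 1$, which requires $t_1 = \delta_1 - 1$ together with the tail beginning with a half-bad prefix. But when $t_1 = \delta_1 - 1$ one has $y \leq r_0 - 1 - (\delta_1 - 1) r_1 = r_1 - r_2 - 1 < r_1 - r_2$, contradicting the identity; hence greedy always yields a good representation.

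For uniqueness and the range bound $x \leq r_0 - 1$ on good representations, I strengthen the inductive hypothesis to the simultaneous claim that (i) good representations biject to $[0, r_0 - 1]$ and (ii) the ``super-good'' representations---those avoiding both $t_1 = \delta_1 - 1$ and any longer half-bad prefix $(t_1, \ldots, t_j) = (\delta_1 - 2, \ldots, \delta_{j-1} - 2, \delta_j - 1)$ at position $1$---biject to $[0, r_0 - r_1 - 1]$. The main obstacle is the converse of the identity needed for uniqueness: given a good representation with $t_1 = \delta_1 - 1$ one must conclude $y < r_1 - r_2$, which amounts to showing that every good tail representation with $y \in [r_1 - r_2, r_1 - 1]$ must begin with some half-bad prefix. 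Statement (ii) applied to the tail provides the exact count $r_2$ for these half-bad-prefix representations, and combined with the one-sided implication of the identity forces the two $r_2$-element sets to coincide. Once this is in place, any good representation of $x$ must agree with the greedy one at position $1$, hence, by inductive uniqueness on the tail, everywhere.
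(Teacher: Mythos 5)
Your proposal is correct, and although it shares the paper's skeleton---induction on the length of the Euclidean sequence, with $t_1$ forced to the greedy value $\lfloor x/r_1\rfloor$ and the tail handled by the inductive hypothesis for $r_1,r_2$---it treats the delicate case $t_1=\delta_1-1$ by a genuinely different mechanism. The paper never writes down your telescoping identity; it shifts by $r_2$ instead: for existence it takes the good representation of $y+r_2$ (with $y=x-t_1r_1$), proves $t_2\geq 1$, and replaces $t_2$ by $t_2-1$, checking by hand that no forbidden pattern is created; for the range bound it shows that $x\geq r_0$ would force $y+r_2\geq r_1$, so that incrementing $t_2$ cannot give a good representation, and unwinding why not produces a forbidden pattern in the original representation of $x$. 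You instead isolate the identity $(\delta_1-2)r_1+\cdots+(\delta_{j-1}-2)r_{j-1}+(\delta_j-1)r_j=r_0-r_1+r_{j+1}$ and strengthen the induction with the ``super-good'' statement (ii), which makes both the existence step and the implication ``$t_1=\delta_1-1$ and good implies $y<r_1-r_2$'' immediate; this is cleaner and more transparent than the paper's ad hoc $\pm r_2$ manipulations. The cost is that (ii) must also be re-established at the level of $r_0,r_1$ to close the induction; this does work (a super-good representation has $t_1\leq\delta_1-2$, and if $t_1=\delta_1-2$ its tail is super-good, giving $x\leq r_0-r_1-1$, while conversely the good representation of any $x\leq r_0-r_1-1$ is super-good by the easy direction of the identity), but your sketch leaves it implicit. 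One small mislabeling: statement (ii) is needed only for the bound $x\leq r_0-1$ (the ``only if'' direction), not for uniqueness---as your final sentence itself shows, part (i) of the inductive hypothesis already pins the tail's value into $[0,r_1-1]$, forcing $t_1=\lfloor x/r_1\rfloor$, after which inductive uniqueness of the tail finishes.
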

\begin{proof}
{
Assume $r_0, r_1, \dots, r_m$ is the Euclidean sequence generated by $r_0$ and $r_1$. We prove by induction on $m$. If $m=1$, the assertion of the lemma is obvious. Now assume $m \geq 2$ and $x$ is an arbitrary nonnegative integer. It is easy to see that $t_1 \leq \lfloor {x \over r_1} \rfloor$. If $t_1 < \lfloor {x \over r_1} \rfloor$, then $x-t_1r_1 \geq r_1$, so by the induction hypothesis $x-t_1r_1$ does not have a good representation with respect to $r_1$ and $r_2$ because $gcd(r_1,r_2)=1$ and the Euclidean sequence obtained from $r_1$ and $r_2$ is $r_1,r_2, \dots, r_m$.

Hence, we may assume $t_1 = \lfloor {x \over r_1} \rfloor$, so by induction hypothesis $x-t_1r_1$ has a good representation with respect to $r_1$ and $r_2$. If $t_1 \leq \delta_1-2$, then the good representation of $x-t_1r_1$ with respect to $r_1$ and $r_2$ extends to a good representation of $x$ with respect to $r_0$ and $r_1$.

If $t_1  = \delta_1-1$, then $x-(\delta_1-1)r_1=x-r_0-r_2+r_1 < r_1-r_2$, therefore $x-t_1r_1+r_2=\sum_{i=2}^mt_ir_i$ is a unique good representation with respect to $r_1$ and $r_2$. We claim $t_2 \geq 1$. If $t_2=0$ then $x-t_1r_1+r_2$ has a good representation with respect to $r_2$ and $r_3$, therefore by induction $x-t_1r_1+r_2 < r_2$, so $x-t_1r_1<0$, a contradiction.  Therefore $(t_2-1)r_2+\sum_{i=3}^mt_ir_i$ is the unique good representation of $x-t_1r_1$ with respect to $r_1$ and $r_2$. We claim that $t_1r_1+(t_2-1)r_2+\sum_{i=3}^mt_ir_i$ is the unique good representation of $x$ with respect to $r_0$ and $r_1$. Uniqueness has been established so it remains to show that the representation is good. Assume the representation is not good. It follows that there exists $i \geq 3$ such that $t_i=\delta_i-1$ and for all $2 < k < i$, $t_k=\delta_k-2$, and $t_2-1=\delta_2-2$. Therefore, $t_2=\delta_2-1$, which implies $\sum_{i=2}^mt_ir_i$ is not a  good representation of $x-t_1r_1+r_2$ with respect to $r_0$ and $r_1$, a contradiction.

Suppose there exists an integer $x \geq r_0$ such that $x$ has a good representation with respect to $r_0$ and $r_1$, $x=\sum_{i=1}^mt_ir_i$. If $t_1 \leq \delta_1-2$ then $x-t_1r_1 \geq x-(r_0+r_2)+2r_1 \geq r_1$. So by induction hypothesis $x-t_1r_1$ does not have a good representation respect to $r_1$ and $r_2$, a contradiction. Hence $t_1=\delta_1-1$ and $x-t_1r_1 < r_1$. This implies that $x-t_1r_1 \geq x-(r_0+r_2)+r_1 \geq r_1-r_2$. Let $x-t_1r_1=\sum_{i=2}^m t_ir_i$ be the good representation of $x-t_1r_1$ with respect to $r_1$ and $r_2$. By induction hypothesis $x-t_1r_1+r_2 \geq r_1$ does not have a good representation with respect to $r_1$ and $r_2$.  Either there exists $3 \leq j \leq m$ such that   $t_j=\delta_j-1$, $t_2+1=\delta_2-1$ and $t_i=\delta_i-2$ for all $2<i<j$, or $t_2+1=\delta_2$, both of which contradict the fact that $\sum_{i=1}^mt_ir_i$ is a good representation of $x$ with respect to $r_0$ and $r_1$ because $t_1=\delta_1-1$.
}
\end{proof}
\begin{lemma}
{
\label{v_0_reduced_good_rep_lem}
Let $(G,R)$ be a Euclidean star generated by $r_0$ and $r_1$ with center vertex $v_0$. Then the set of all $v_0$-reduced divisors are the set of divisors such that for any Euclidean chain $C=v_0,v_1, \dots,v_m$ leaving $v_0$, $x=\sum_{i=1}^mD(v_i)r_i$ is a good representation with respect to $r_0$ and $r_1$.
}
\end{lemma}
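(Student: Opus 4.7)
The plan is to reduce the problem to a single-chain analysis using the decoupling induced by the shared vertex $v_0$, then prove the single-chain equivalence by induction on chain length via a conservation identity, and handle the reverse direction via explicit counter-firings.

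Since every valid firing $f$ with respect to $v_0$ satisfies $f(v_0)=0$, and since every non-$v_0$ vertex $v$ in a chain $C_j$ has all of its neighbors in $C_j \cup \{v_0\}$, the value $(Qf)(v)$ depends only on $f|_{C_j}$. Writing $f=\sum_j f|_{C_j}$, the condition $(D-Qf)(v)\ge 0$ for all $v\ne v_0$ decouples across chains. Hence $D$ is $v_0$-reduced if and only if, for each Euclidean chain $C=v_0,v_1,\ldots,v_m$ leaving $v_0$, the restriction $D|_C$ admits no nonzero firing $f$ with $f(v_0)=0$, $0\le f(v_i)\le r_i$ for $i\ge 1$, and $(D-Qf)(v_i)\ge 0$ for all $1\le i\le m$. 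Call this latter property \emph{chain-reducedness} of $D|_C$; it then suffices to prove that $D|_C$ is chain-reduced iff $(D(v_1),\ldots,D(v_m))$ is a good representation of $x=\sum_{i=1}^m D(v_i) r_i$ with respect to $r_0,r_1$.

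The crux is the identity
\[
\sum_{i=1}^m (Qf)(v_i)\, r_i = r_0\, f(v_1),
\]
valid for every firing $f$ on the chain with $f(v_0)=0$. This is a direct telescoping using $\delta_i r_i = r_{i-1}+r_{i+1}$ for $1\le i\le m-1$ and $\delta_m r_m = r_{m-1}$; after cancellation only the $r_0\, f(v_1)$ boundary term survives, which expresses the conservation law that $x$ mod $r_0$ is invariant under firings leaving $v_0$ alone. Granted this, the forward direction proceeds by induction on $m$. The base $m=1$ is direct: the only firings have $f(v_1)=k\ge 1$, and $(D-Qf)(v_1)=D(v_1)-k\delta_1$ is negative precisely when $D(v_1)\le \delta_1-1$. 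For the inductive step, assume the good condition holds, so Lemma~\ref{unique_representation_lem} gives $0\le x\le r_0-1$. If some nonzero valid firing $f$ satisfied $D-Qf\ge 0$ on $v_1,\ldots,v_m$, the identity forces $f(v_1)=0$ (else $x-r_0 f(v_1)\le -1$ would contradict $\sum(D-Qf)(v_i)r_i\ge 0$). But then $f|_{\{v_2,\ldots,v_m\}}$ is a nonzero valid firing on the subchain $v_1,v_2,\ldots,v_m$, which is itself a Euclidean chain generated by $r_1,r_2$; the outer good condition restricts to the subchain good condition (same per-entry bounds, and a sub-pattern of the no-forbidden-pattern constraint), contradicting the inductive hypothesis.

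For the reverse direction I argue the contrapositive by exhibiting explicit firings. If some $D(v_i)\ge\delta_i$, the firing $f=\chi_{\{v_i\}}$ gives $(D-Qf)(v_i)=D(v_i)-\delta_i\ge 0$ and $(D-Qf)(v_j)\ge D(v_j)\ge 0$ for $j\ne i$, so $D|_C$ is not chain-reduced. If $0\le D(v_i)\le \delta_i-1$ for all $i$ but a forbidden pattern occurs at positions $i<j$ (with $D(v_i)=\delta_i-1$, $D(v_j)=\delta_j-1$, and $D(v_k)=\delta_k-2$ for $i<k<j$), the firing $f=\chi_{\{v_i,v_{i+1},\ldots,v_j\}}$ yields $(D-Qf)(v_k)=0$ for $i\le k\le j$ and $(D-Qf)(v_{i-1}),(D-Qf)(v_{j+1})\ge 1$ whenever these exist; the boundary cases $i=1$ (where $v_{i-1}=v_0$ is ignored) and $j=m$ (where $v_{j+1}$ does not exist) cause no trouble. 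In either case $D|_C$ is not chain-reduced. The main obstacle I anticipate is the conservation identity: although elementary to verify, its exact form is tailored to the Euclidean structure and provides the precise bridge between good-representation bookkeeping and the geometry of chain-reducedness; once it is in hand, the induction and the counter-firings are routine.
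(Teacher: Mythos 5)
Your proposal is correct, and it takes a route that differs in substance from the paper's. For the direction ``good representation $\Rightarrow$ $v_0$-reduced'' the paper argues by contradiction: it pushes the allegedly non-reduced divisor to a $v_0$-reduced one via Corollary~\ref{positive_divisor_reduction_cor}, uses the same weighted conservation law you isolate (the chain sum drops by $r_0 f(v_1)$, so $f(v_1)=0$ and the sum is preserved), and then invokes the \emph{uniqueness} part of Lemma~\ref{unique_representation_lem} together with Lemma~\ref{lem:leftkernel} to conclude that the resulting tuple is a second, hence non-good, representation of the same $x$, contradicting the other direction (which the paper declares clear). You instead make the identity $\sum_{i=1}^m r_i (Qf)(v_i)=r_0 f(v_1)$ explicit and run an induction on chain length, using only the range statement of Lemma~\ref{unique_representation_lem} (namely $x\le r_0-1$) plus the observation that a good representation restricts to a good representation of the tail chain generated by $r_1,r_2$; this is self-contained, avoids both the uniqueness statement and the appeal to the reduction algorithm, and buys a cleaner logical structure since the two directions are proved independently. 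Conversely, the direction the paper dismisses as clear (not good $\Rightarrow$ not reduced) you prove honestly with the explicit firings $\chi_{\{v_i\}}$ and $\chi_{\{v_i,\dots,v_j\}}$, which is exactly the right computation. Two small bookkeeping points to fix in a write-up: the chain-by-chain biconditional in your first paragraph quietly drops condition (i) of Definition~\ref{reduced_def}, so you should note that a divisor with a negative entry off $v_0$ is neither $v_0$-reduced nor good (this also covers the case missing from your contrapositive, where ``not good'' is caused by a negative coefficient rather than an oversized entry or a forbidden pattern); and in the inductive step you should record that $\gcd(r_1,r_2)=\gcd(r_0,r_1)=1$ and that the $\delta_i$'s of the tail chain coincide with those of the original chain, so the inductive hypothesis genuinely applies.
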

\begin{proof}
{
Let $D$ be a $v_0$-reduced divisor and $C=v_0,v_1, \dots,v_m$ be a Euclidean chain leaving $v_0$. It is clear that if $x=\sum_{i=1}^mD(v_i)r_i$ is not a good representation with respect to $r_0$ and $r_1$ then $D$ is not a $v_0$-reduced divisor.

Conversely, let $D$ be a divisor such that for every Euclidean chain $C=v_0,v_1, \dots,v_m$ leaving $v_0$, $x=\sum_{i=1}^mD(v_i)r_i$ is a good representation with respect to $r_0$ and $r_1$, but $D$ is not a $v_0$-reduced divisor.
Let $f \geq {\zerov}$ be a firing strategy such that $f(v_0) = 0$ and $D'=D-Qf$ is a $v_0$-reduced divisor. Note that the existence of $f$ is guaranteed by Corollary~\ref{positive_divisor_reduction_cor}. Let $C=v_0,v_1, \dots,v_m$ be a Euclidean chain leaving $v_0$. Without loss of generality we may assume $f' \neq {\zerov}$ where $f'$ is the projection of $f$ into the first $m+1$ coordinates. If $f'(v_1) > 0$ then $\sum_{i=1}^mD'(v_i)r_i <0$, therefore there exists $1 \leq i \leq m$ such that $D'(v_i) < 0$, a contradiction. Hence, $\sum_{i=1}^mD'(v_i)r_i=\sum_{i=1}^mD(v_i)r_i$. Since $f' \neq {\zerov}$, by Lemma~\ref{lem:leftkernel} and the uniqueness of the representation of $\sum_{i=1}^mD(v_i)r_i$ implied by  Lemma~\ref{unique_representation_lem}, $\sum_{i=1}^mD'(v_i)r_i$ is not a good representation. Therefore $D'$ is not $v_0$-reduced, a contradiction.
}
\end{proof}
\begin{definition}
Let $(G,R)$ be a Euclidean star generated by $r_0$ and $r_1$ with the center vertex $v_0$. We say a divisor $S$ is a {\it staircase divisor} if there exists a labeling $C_0, \dots, C_{r_0-1}$ of the Euclidean chains leaving $v_0$ where $P_i=v_{0},v_{i,1}, \dots, v_{i,m}$ is the induced path of $C_i$ such that $\sum_{j=1}^m S(v_{i,j})r_j$ is the good representation of $i$, for all $0 \leq i \leq r_0-1$, and $S(v_0)=-1$.
\end{definition}
\begin{lemma}
{
\label{extreme_divisor_star_lem}
Let $(G,R)$ be a Euclidean star generated by $r_0$ and $r_1$ with the center vertex $v_0$. A divisor $D$ is an extreme $v_0$-reduced divisor if and only if $D$ is a staircase divisor.
}
\end{lemma}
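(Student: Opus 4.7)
The plan is to use Lemma~\ref{v_0_reduced_good_rep_lem} to parameterize each $v_0$-reduced divisor $D$ by its $v_0$-value $k := D(v_0)$ together with the tuple of chain projections $(x_0,\dots,x_{r_0-1}) \in \{0,\dots,r_0-1\}^{r_0}$, and then to match the definition of staircase divisor against the extremality criterion of Lemma~\ref{extreme_reduced_effective lem} via an explicit analysis of how firing $v_0$ acts on this data.

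The central computation is the following. The Euclidean recurrence $\delta_j r_j = r_{j-1} + r_{j+1}$ forces firings internal to a Euclidean chain to preserve the chain projection $\sum_j D(v_{i,j}) r_j$, while firing $v_{i,1}$ decreases this projection by exactly $r_0$ and simultaneously sends $+1$ to $v_0$. Therefore, after firing $v_0$ once and re-reducing each chain, uniqueness in Lemma~\ref{unique_representation_lem} compels the new projections to become $(x_i + r_1) \bmod r_0$; using $\delta_0 = r_1$ (since $v_0$ has $r_0$ neighbors each of multiplicity $r_1$), the $v_0$-value changes by $-r_1 + |\{i : x_i \geq r_0 - r_1\}|$. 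Because $\gcd(r_0,r_1) = 1$, iterating yields the $r_0$ $v_0$-reduced representatives in the class of $D$ as the cyclic shifts of $(x_0,\dots,x_{r_0-1})$ by multiples of $r_1 \pmod{r_0}$. An averaging argument shows that the $v_0$-value shifts sum to zero along the full orbit and, moreover, that the $v_0$-value is constant along the orbit if and only if $(x_0,\dots,x_{r_0-1})$ is a permutation of $\{0,\dots,r_0-1\}$.

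For the forward direction, a staircase $S$ is $v_0$-reduced and all $r_0$ equivalent representatives share $v_0$-value $-1$, so no equivalent divisor is effective and $S \in \Sigma(\L_{(G,R)})$. To verify extremality via Lemma~\ref{extreme_reduced_effective lem}(ii), I take $D' = S$ when $v = v_0$; when $v = v_{i,j}$, I select the equivalent staircase in which chain $C_i$ has projection $0$ (available because $\gcd(r_0,r_1)=1$) and perform borrowings at $v_0, v_{i,1}, \dots, v_{i,j-1}$ to walk the $-1$ from $v_0$ down the zero chain to $v_{i,j}$, keeping every other coordinate nonnegative. For the reverse direction, let $D$ be extreme and $v_0$-reduced. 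The membership $D \in \Sigma(\L_{(G,R)})$ bounds the maximum orbit $v_0$-value by $-1$, while the extremality constraint $D + e_0 \notin \Sigma(\L_{(G,R)})$ bounds it from below by $-1$, pinning it to $-1$ exactly. If the projections were not a permutation, the $v_0$-values along the orbit would be non-constant; using the resulting imbalance, one can exhibit a vertex $v_{i,j}$ for which the orbit of $v_0$-reduced representatives of $D + e_{v_{i,j}}$ lies entirely in $\Sigma(\L_{(G,R)})$, contradicting extremality. Hence the projections form a permutation, all orbit $v_0$-values equal $D(v_0)$, and this common value must be $-1$, so $D$ is staircase.

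The main obstacle is the shift computation, particularly verifying that the re-reduction produces a \emph{good} representation (respecting the ``no two adjacent maximal digits'' condition) rather than merely a valid one; this rests on the uniqueness in Lemma~\ref{unique_representation_lem} but requires careful tracking along the Euclidean chain. The squeezing step in the reverse direction, converting a non-permutation projection tuple into an extremality-violating neighbor, is the most delicate piece and requires comparing extremes of the original orbit against those of the perturbed orbits obtained by adding $e_{v_{i,j}}$.
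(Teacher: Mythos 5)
Your central computation is sound: chain-internal firings preserve the weighted chain sum, firing a chain head decreases it by $r_0$ while crediting $v_0$ one chip, so one $v_0$-firing followed by re-reduction shifts every projection $x_i$ to $(x_i+r_1)\bmod r_0$ and changes the $v_0$-value by $-r_1+\#\{i:x_i\ge r_0-r_1\}$; the equivalence ``$v_0$-value constant along the orbit iff $(x_0,\dots,x_{r_0-1})$ is a permutation of $\{0,\dots,r_0-1\}$'' is also correct (a sliding-window argument plus $\gcd(r_0,r_1)=1$). However, both directions have genuine gaps. In the forward direction your walking construction fails: borrowing at $v_0$ removes one chip from \emph{every} chain head, not only from the head of $C_i$, and whenever $r_1\ge 2$ a staircase contains chains whose projection is less than $r_1$ (e.g.\ the chain carrying $1$), whose good representation has head coefficient $0$; those heads drop to $-1$, so the claim ``keeping every other coordinate nonnegative'' is false, and you never verify that the result is $v_{i,j}$-reduced. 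The natural repair is different: to see $S+\chi_{\{v_{i,j}\}}\notin\Sigma(\L_{(G,R)})$, pass to the equivalent staircase in which $C_i$ has projection $r_0-r_j$; adding the chip raises that chain sum to $r_0$, and reducing with respect to $v_0$ (firings only) empties $C_i$, leaves the other chains unchanged, and raises $v_0$ from $-1$ to $0$, so the perturbed divisor is equivalent to an effective one.

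In the reverse direction, the step you defer is the entire content of the argument and is not supplied. With $V_k=D(v_0)+\frac{1}{r_0}\bigl(\sum_l x_l-\sum_l((x_l+kr_1)\bmod r_0)\bigr)$ the $v_0$-value after $k$ firings of $v_0$, and given $\max_k V_k=-1$, one checks that $D+\chi_{\{v_{i,j}\}}$ remains in $\Sigma(\L_{(G,R)})$ precisely when $(x_i+kr_1)\bmod r_0<r_0-r_j$ for every $k$ in the tight set $B=\{k:V_k=-1\}$; so you must exhibit such a pair $(i,j)$ whenever the projections are not a permutation. This is not a routine consequence of the averaging identity (the orbit increments summing to zero) — a direct counting bound does not force a chain avoiding the forbidden window — and your sketch offers no mechanism for producing it. The paper proceeds differently and avoids this perturbation problem altogether: its first step shows (as you do) that all $v_0$-reduced representatives of a staircase are staircases, hence staircases lie in $\Sigma(\L_{(G,R)})$; its second step shows that every $v_0$-reduced divisor not equivalent to an effective divisor is, up to equivalence, dominated by a staircase (by sorting the chain projections and comparing the two reduction firing strategies), and extremality together with the fact that all staircases have the same degree then forces equality. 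Either prove your perturbation claim in full or replace it by a domination statement of that kind.
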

\begin{proof}
{
Let $S$ be a staircase divisor and $C_0, \dots, C_{r_0-1}$ be a labeling of the Euclidean chains leaving $v_0$ where $v_{0},v_{i,1}, \dots, v_{i,m}$ are the vertices of $C_i$. We claim that $S$ is not equivalent to an effective divisor. For proving the claim, it is enough to show that all $v_0$-reduced divisors equivalent to $S$ are staircase divisors. Let $1 \leq k \leq r_0$ and $f_k$ be the firing strategy guaranteed by Corollary~\ref{unique_firing_reduced_cor}, such that $f_k(v_0)=k$ and $S_k=S-Qf_k$ is a $v_0$-reduced divisor. Note that since $S$ is a $v_0$-reduced divisor, by Lemma~\ref{v_0_reduced_good_rep_lem}, the divisor $S$ is $v_0$-reduced. So, as an application of part (ii) of Theorem~\ref{dhar_alg_thm}, we may assume $f_k \geq \zerov$. It is clear from the proof of Lemma~\ref{v_0_reduced_good_rep_lem}, $\sum_{j=1}^mS_k(v_{i,j})r_j$ is a good representation of  $i+kr_1 \hbox{ mod } r_0$ for all $0 \leq i \leq r_0-1$.  Note that $S_k$ is a staircase divisor and $s_k(v_0)=-1$. So (i) of Lemma~\ref{extreme_reduced_effective lem} implies that $S_k$ is not equivalent to an effective divisor.

Now, we prove that for any $v_0$-reduced divisor $D$ not equivalent to an effective, there exists a staircase divisor $S$ such that and $D' \sim D$ such that $D' \leq S$. Let $C_0, \dots, C_{r_0-1}$ be a labeling of the Euclidean chains leaving $v_0$ where $v_{0},v_{i,1}, \dots, v_{i,m}$ are the vertices of $C_i$ such that $\sum_{j=1}^mD(v_{i,j})r_j \leq \sum_{j=1}^mD(v_{i+1,j})r_j$ for all $0 \leq i \leq r_0-2$. Let $S$ be the staircase divisor defined by the same labeling of the Euclidean chains leaving $v_0$. If for all $0 \leq i \leq r_0-1$, $\sum_{j=1}^mD(v_{i,j})r_j \leq i$ then $D \leq S$, so we may assume that there exists $0 \leq i \leq r_0-1$ such that $\sum_{j=1}^mD(v_{i,j})r_j > i$. Let $k$ be such that $kr_1 \equiv r_0-i-1 \hbox{ (mod) } r_0$. By Corollary~\ref{unique_firing_reduced_cor} there exist firing strategies $f_D$ and $f_S$ such that $f_D(v_0)=f_S(v_0)=k$ and the divisors $D_k=D-Qf_D$ and $S_k=S-Qf_S$ are $v_0$-reduced. We claim that $D_k$ is effective, in particular $D_k(v_0)=0$. We have $f_D(v_{\l,1})=f_S(v_{\l,1})=\lfloor {kr_1 \over r_0}\rfloor$ for all $0 \leq \l \leq i-1$ and $f_D(v_{\l,1})=f_S(v_{\l,1})=\lceil {kr_1 \over r_0}\rceil$ for all $i+1 \leq \l \leq r_0-1$, but $f_D(v_{i,1})=\lceil {kr_1 \over r_0}\rceil$ while $f_S(v_{i,1})=\lfloor {kr_1 \over r_0}\rfloor$. This proves the claim and completes the proof of the lemma.
}
\end{proof}
\begin{theorem}
{
\label{euc_star_thm}
Let $(G,R)$ be a Euclidean star then $(G,R)$ has the Riemann-Roch property.
}
\end{theorem}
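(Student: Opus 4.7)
\begin{proofof}{Theorem~\ref{euc_star_thm} (plan)}
The plan is to reduce to Theorem~\ref{g_0g_ming_max_equal} by showing $g_{\min}=g_{\max}=g_0$ for a Euclidean star $(G,R)$ generated by $r_0$ and $r_1$. Since Lemma~\ref{extreme_divisor_star_lem} already gives an exact combinatorial description of the extreme $v_0$-reduced divisors, the remaining work is just to compute the common degree of these divisors and match it against $g_0$.

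First, I would use the fact that every element of $Ext(\Sigma(\L_{(G,R)}))$ has a $v_0$-reduced representative of the same degree (by Lemma~\ref{extreme_reduced_effective lem}(ii) together with the degree being an $\L_{(G,R)}$-invariant). By Lemma~\ref{extreme_divisor_star_lem} every such representative is a staircase divisor $S$. Unpacking the definition, for any labeling $C_0,\dots,C_{r_0-1}$ of the Euclidean chains leaving $v_0$ with chain $C_i$ having vertices $v_{0},v_{i,1},\dots,v_{i,m}$ of multiplicities $r_0,r_1,\dots,r_m$, we have $\sum_{j=1}^{m} S(v_{i,j})r_j = i$ since this is the good representation of $i$, and $S(v_0)=-1$. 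Thus
$$\deg_R(S)=-r_0+\sum_{i=0}^{r_0-1}\sum_{j=1}^{m} S(v_{i,j})r_j=-r_0+\sum_{i=0}^{r_0-1}i=\frac{r_0(r_0-3)}{2},$$
independent of the staircase labeling. In particular $g_{\min}=g_{\max}=\tfrac{r_0(r_0-3)}{2}+1=\tfrac{(r_0-1)(r_0-2)}{2}$.

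Next, I would compute $g_0$ directly. For the center vertex, $\delta_0 r_0 = \sum_{v\in N(v_0)} r(v)=r_0\cdot r_1$, so $\delta_0=r_1$. Along each chain the Euclidean recurrence gives $\delta_i r_i = r_{i-1}+r_{i+1}$ for $1\leq i\leq m-1$ and $\delta_m r_m = r_{m-1}$ with $r_m=\gcd(r_0,r_1)=1$. A telescoping sum along one chain yields $\sum_{i=1}^{m} r_i(\delta_i-2)=r_0-r_1-1$, and summing over the $r_0$ identical chains together with the center's contribution $r_0(\delta_0-2)=r_0(r_1-2)$ gives $2g_0-2=r_0(r_1-2)+r_0(r_0-r_1-1)=r_0^2-3r_0$, i.e., $g_0=\tfrac{(r_0-1)(r_0-2)}{2}$. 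Therefore $g_0=g_{\min}=g_{\max}$, and Theorem~\ref{g_0g_ming_max_equal} yields the Riemann-Roch property.

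The main obstacle has already been overcome in Lemma~\ref{extreme_divisor_star_lem}: the nontrivial content is the staircase characterization of extreme $v_0$-reduced divisors, which relies on the uniqueness of good representations (Lemma~\ref{unique_representation_lem}) and the analysis of reductions along a Euclidean chain (Lemma~\ref{v_0_reduced_good_rep_lem}). Given that, the only subtle point remaining is to observe that the symmetry of the Euclidean star forces every staircase divisor to have the same degree, after which the equality $g_0=g_{\max}$ follows from a direct telescoping computation along one chain and multiplication by $r_0$.
\end{proofof}
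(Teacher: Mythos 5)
Your proposal is correct and follows essentially the same route as the paper: use Lemma~\ref{extreme_divisor_star_lem} to identify the extreme $v_0$-reduced divisors with staircase divisors, compute their common degree $r_0(r_0-3)/2$, check it equals $g_0-1$, and invoke Theorem~\ref{g_0g_ming_max_equal}. The only (cosmetic) difference is that you compute $g_0$ by telescoping $\sum r_i(\delta_i-2)$ along a chain, while the paper uses the equivalent identity $2g_0-2=\sum_i r_i(\deg(v_i)-2)$, which is immediate since the internal chain vertices have degree $2$ and each leaf has degree $1$ and multiplicity $1$.
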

\begin{proof}
{
By Lemma~\ref{extreme_divisor_star_lem}, we know that the set of staircase divisors is the set of extreme $v_0$-reduced divisors, hence $$g_{\min}-1=g_{\max}-1=(\sum_{i=0}^{r_0-1}i)-r_0=r_0(r_0-3)/2.$$
Let $V(\vec{G})=\{v_0, \dots, v_n\}.$ Using the formula $$g_0-1=\sum_{i=0}^n r_i(\deg(v_i)-2)/2=r_0(r_0-3)/2=\left({\begin{matrix} r_0-1 \cr 2\end{matrix}}\right)-1.$$ Now the assertion of the theorem follows from Theorem~\ref{g_0g_ming_max_equal}.
}
\end{proof}
\subsection{Arithmetical Graphs without the Riemann-Roch Property}
It follows from Theorem~\ref{RR_formula_equiv_U_RI_thm} that an arithmetical graph $(G,R)$ fails to have the Riemann-Roch property if $(G,R)$ is not uniform or is not reflection invariant. The following examples show that all of these three possibilities can happen.
\begin{example}
\label{withot_RR_NU_NR_exa}
{
Let $(G,R)$ be an arithmetical graph, where $G$ is the graph obtained by adding two edges connecting $v_0$ to $v_{3}$ to the $6$-cycle $v_0, \dots, v_{5}$, and the multiplicity of the vertex $v_i$ is $1$ if $i \in \{0,2,4\}$ and is $2$ otherwise. Then $(G,R)$ is neither uniform nor reflection invariant.
}
\end{example}
\begin{proof}
{
Let $\nu_1=-\chi_{\{v_0\}}+\chi_{\{v_{2},v_{3},v_{4}\}}$, $\nu_2=-\chi_{\{v_0\}}+\chi_{\{v_{2}\}}+2\chi_{\{v_{4}\}}$ and $\nu_3=-\chi_{\{v_0\}}+2\chi_{\{v_{2}\}}+\chi_{\{v_{4}\}}$. We claim that $\mathcal{E}=\{\nu_1,\nu_2,\nu_3\}$ is the set of
extreme $v_0$-reduced divisors of $(G,R)$. Note that $\deg_R(\nu_1)=3$ and $\deg_R(\nu_2)=\deg_R(\nu_3)=2$.
For proving the claim we start by showing that $\nu_1$ is
$v_0$-reduced. Let $f$ be a valid firing strategy with respect to $v_0$ such that $(D_1-Qf)(v_i) \geq 0$, for all $1 \leq i \leq 5$. If $f(v_{2})=1$, since $(D_1-Qf)(v_{2}) \geq 0$, we have $f(v_1)+f(v_{3}) \geq 3$. If $f(v_1) =2$, since $(D_1-Qf)(v_{1}) \geq 0$ we must have $f(v_0) \geq 1$, a contradiction. So $f(v_{3})=2$ and this implies that in order to have $(D_1-Qf)(v_{3}) \geq 0$ we must have $f(v_{4}) = 3$, a contradiction. This shows that $f(v_1)=0$, and by symmetry $f(v_5)=f(v_4)=0$, which shows that $f(v_3)=0$. This shows that $f={\zerov}$, which contradicts the fact that $f$ is valid strategy with respect to $v_0$. Hence, $\nu_1$ is $v_0$-reduced, as desired. By applying a similar argument, we can see that $\nu_2$ and $\nu_3$ are $v_0$-reduced divisors. Note that since $r_0=1$, by Lemma~\ref{extreme_reduced_effective lem}(i), the $v_0$-reduced divisors $\nu_1,\nu_2,\nu_3$ are not effective and they are pairwise inequivalent.

It is easy to compute that $\deg_R(\nu_1)=3=g_0-1$, so Theorem~\ref{gmaxg_0} implies that  $\nu_1$ is extreme. Hence, by symmetry, we only need to prove that $\nu_2$ is extreme. For proving this fact it is enough to show that $D=\nu_2+\chi_{\{v_i\}}$ is equivalent to an effective divisor for all $0 \leq i \leq 5$. If $i \not \in \{0,2,4\}$, then degree of $D$ is $4 = g_0$, so Theorem~\ref{gmaxg_0} implies that $D$ is equivalent to an effective divisor. If $i=0$, then $D$ is trivially effective. If $i=2$, then we have a firing strategy $f={\onev}-\chi_{\{v_0\}}$ such that $D-Qf=3\chi_{\{v_0\}} \geq {\zerov}$. Also if $i=4$, then we have $f=\chi_{\{v_4,v_5\}}$ such that $D-Qf=\chi_{\{v_2,v_3\}} \geq {\zerov}$. This completes the proof of the fact that $\nu_1, \nu_2, \nu_3$ are extreme $v_0$-reduced divisors.

Suppose $\nu$ is an extreme $v_0$-reduced divisor. It is easy to see that $\nu(v_2) \leq 2$ (by symmetry $\nu(v_4) \leq 2$), since otherwise $\nu-Qf \geq 0$, where $f=\chi_{\{v_1,v_2\}}$. Note that $\nu(v_1)=\nu(v_5)=0$ and $\nu(v_3) \leq 1$. It follows that $\mathcal{E}$ is the set of $v_0$-reduced divisors and this completes the proof of the claim.
This demonstrates that $(G,R)$ is not uniform.

Now, we are going to show that $(G,R)$ is not reflection invariant.
Let $\L$ be the lattice spanned by Laplacian of $(G,R)$. By applying Lemma~\ref{reduce_exist_lemma} and (ii) of Lemma~\ref{extreme_reduced_effective lem}, we conclude that $Ext(\Sigma(\L))=\{\nu+\l: \ell \in \L, \nu \in \mathcal{E}\}$. Corollary~\ref{extreme_L_ciritical_cor} implies $Crit(\L)=\mathcal{P}+\L$, where $\mathcal{P}=\{\pi(\nu+{\onev}): \nu \in \mathcal{E}\}$. Let $p_i=\pi(\nu_i+\onev)=(\nu_i+\onev)-\left({(\nu_i+\onev) \cdot R \over R \cdot R}\right)R$. An easy computation shows that $p_1={1 \over 5}(-4,-3,6,2,6,-3), p_2={1 \over 15}(-11,-7,19,-7,34,-7)$ and $p_3={1 \over 15}(-11,-7,34,-7,19,-7)$. For seeking a contradiction, assume there exists $v \in \R^{6}$ such that $-Crit(\L)=Crit(\L)+v$. Either there exist $\l,\l',\l'' \in \L$ such that $-p_1=p_1+\l+v$, $-p_2=p_2+\l'+v$ and $-p_3=p_3+\l''+v$, in this case $2(p_i-p_j) \in \L$ for all $1 \leq i \neq j \leq 3$. Or, there exist $\l,\l' \in \L$ and $\{i,j,k\}=\{1,2,3\}$ such that $-p_i=p_j+\l+v$, and $-p_k=p_k+\l'+v$, in this case $-p_j=p_i+\l+v$ and we must have $-2p_k+p_i+p_j \in \L$. Note that $\L \subseteq \Z^6$, so an easy computation shows that none of the above cases happen. This proves that $(G,R)$ is not reflection invariant.
}
\end{proof}
\begin{example}
{
\label{exa:uni_not_ref}
Let $(G,R)$ be an arithmetical graph, where $G$ is a graph obtained from $K_4$ where $V(K_4)=\{v_0,v_1,v_2,v_3\}$, by subdividing the edge $v_2v_3$ twice. The multiplicity of the vertices $v_0$ and $v_1$ are $2$ and $4$ respectively, and the multiplicity of the other vertices are $3$. Then $(G,R)$ is uniform but not reflection invariant.
}
\end{example}
\begin{proof}
{
Let $P=v_2v_4v_5v_3$ be the induced path connecting $v_2$ to $v_3$, i.e., the path obtained by subdividing the edge $v_2v_3$ in the graph $K_4$.

Let $\nu_1=-\chi_{\{v_0\}}+\chi_{\{v_2,v_4\}}$, $\nu_2=-\chi_{\{v_0\}}+2\chi_{\{v_2\}}$ and $\nu_3=-\chi_{\{v_0\}}+2\chi_{\{v_3\}}$.
We claim that $\mathcal{E}=\{\nu_1,\nu_2,\nu_3\}$ is the set of
extreme $v_0$-reduced divisors of $(G,R)$. By running the Generalized Dhar's Algorithm on each $\nu_i$, $1 \leq i \leq 3$, it is not hard to see    that $\nu_1 \sim -\chi_{\{v_0\}}+\chi_{\{v_3,v_5\}}$, $\nu_2 \sim -\chi_{\{v_0\}}+\chi_{\{v_3,v_4\}}$ and $\nu_3 \sim -\chi_{\{v_0\}}+\chi_{\{v_2,v_5\}}$.

We will leave the details of the fact that $\nu_i$, $1 \leq i \leq 3$ is $v_0$-reduced to the reader. (It follows from Lemma~\ref{dhar_alg_thm}, or case analysis similar to that one used in the proof of the Example~\ref{withot_RR_NU_NR_exa}.) It is easy to compute that $g_0=7$, and for all $\nu \in \mathcal{E}$ and $0 \leq i \leq 5$, $\deg_R(\nu+\chi_{\{v_i\}}) \geq 7$. Now, Theorem~\ref{gmaxg_0} implies that $\nu+\chi_{\{v_i\}}$ is equivalent to an effective divisor. This shows that $\nu_i$, $1 \leq i \leq 3$ is extreme $v_0$-reduced.

To finish the proof of the claim, it is enough to show that if $\nu$ is extreme $v_0$-reduced divisor then $\nu \in \mathcal{E}$. Note that $\nu(v_1)=0$ since otherwise $\nu-Qf \geq 0$ where $f=\chi_{\{v_0\}}+3\chi_{\{v_1\}}+2\chi_{\{v_2,v_3,v_4,v_5\}}$. Also, note that if $\nu(v_2) \geq 1$ and $\nu(v_3) \geq 1$, then $\nu-Qf \geq \chi_{\{v_1\}}$ where $f=\chi_{\{v_0, \dots, v_5\}}$. This shows that there exists $1 \leq i \leq 3$ such that $\nu=\nu_i$ or $\nu \sim \nu_i$.

The uniformity of $(G,R)$ immediately follows from the fact that for all $\nu \in \mathcal{E}$, $\deg_R(\nu)=4$.

For proving the fact that $(G,R)$ is not reflection invariant, we apply a similar argument we used in the proof of Example~\ref{withot_RR_NU_NR_exa}. Let $\mathcal{P}=\{p_1,p_2,p_3\}$ be the same set as defined in Example~\ref{withot_RR_NU_NR_exa}. An easy computation shows that $p_1={1 \over 3}(-2,-1,4,-1,4,-1), p_2={1 \over 3}(-2,-1,7,-1,1,-1)$ and $p_3={1 \over 5}(-4,-3,1,7,1,-3)$. For seeking a contradiction, assume there exists $v \in \R^{6}$ such that $-Crit(\L)=Crit(\L)+v$. Either there exist $\l,\l',\l'' \in \L$ such that $-p_1=p_1+\l+v$, $-p_2=p_2+\l'+v$ and $-p_3=p_3+\l''+v$, in this case $2(p_i-p_j) \in \L$ for all $1 \leq i \neq j \leq 3$. Or, there exist $\l,\l' \in \L$ and $\{i,j,k\}=\{1,2,3\}$ such that $-p_i=p_j+\l+v$, and $-p_k=p_k+\l'+v$, in this case $-p_j=p_i+\l+v$ and we must have $-2p_k+p_i+p_j \in \L$. Note that $\L \subseteq \Z^6$, so an easy computation shows that none of the above cases occur. This proves that $(G,R)$ is not reflection invariant.
}
\end{proof}
\begin{example}
{
Suppose $R=(r_0,r_1,r_2)=(1,2,3)$. Let $(G,R)$ be an arithmetical graph where $G$ is a graph with vertex set $\{v_0,v_1,v_2\}$ such that the multiplicity of $v_i$ is $r_i$ and $v_i$ is connected to $v_j$ with $r_ir_j$ edges for all $0 \leq i \neq j \leq 2$. Then $(G,R)$ is not uniform but it is reflection invariant.
}
\end{example}
\begin{proof}
{
We claim that $\nu_1=-\chi_{\{v_0\}}+3\chi_{\{v_1\}}+2\chi_{\{v_2\}}$ and $\nu_2=-\chi_{\{v_0\}}+\chi_{\{v_1\}}+3\chi_{\{v_2\}}$ are the only extreme $v_0$-reduced divisors.
Suppose $\nu$ is an extreme $v_0$-reduced divisor. Lemma~\ref{extreme_reduced_effective lem} (ii) implies that $\nu(v_0)=-1$.
It is not hard to see that $\nu(v_1) \leq 3$ and $\nu(v_2) \leq 3$, otherwise $\nu-Qf$ is effective where $f=\chi_{\{v_1,v_2\}}$ and $f=\chi_{\{v_1\}}+2\chi_{\{v_2\}}$ respectively.
Moreover, if $D=-\chi_{\{v_0\}}+2\chi_{\{v_1\}}+3\chi_{\{v_2\}}$, then $D-Qf$ is effective where $f'=2\chi_{\{v_1\}}+3\chi_{\{v_2\}}$.
Therefore the only possible extreme divisors are $\nu_1$ and $\nu_2$.
By running the generalized Dhar's algorithm on $\nu_1$ and $\nu_2$, and applying Lemma~\ref{dhar_alg_thm}, one can check that $\nu_1$ are $\nu_2$ are $v_0$-reduced and therefore they are not equivalent to effective divisors. Note that the above computation shows that we already checked some of the different possible firing strategies in a run of the generalized Dhar's Algorithm on $\nu_1$ and $\nu_2$.

So, we claim that if an arithmetical graph $(G,R)$ has only two $v_0$-reduced divisors then $(G,R)$ is reflection invariant. Let $\L$ be the lattice spanned by Laplacian of $(G,R)$ and $\mathcal{E}$ be the set of extreme divisors of $\L$. By applying Lemma~\ref{reduce_exist_lemma} and (ii) of Lemma~\ref{extreme_reduced_effective lem}, we conclude that $Ext(\Sigma(\L))=\{\nu+\l: \ell \in \L, \nu \in \mathcal{E}\}$. Corollary~\ref{extreme_L_ciritical_cor} implies $Crit(\L)=\mathcal{P}+\L$ where $\mathcal{P}=\{\pi(\nu+{\onev}): \nu \in \mathcal{E}\}$. Let $\nu_1$ and $\nu_2$ be the only extreme $v_0$-divisors of $(G,R)$ and $p_1=\pi(\nu_1+{\onev})$ and $p_2=\pi(\nu_2+{\onev})$.  For proving the claim its enough to show that $-Crit(\L)=Crit(\L)+v$ where $v=-p_1-p_2$. Assume $p \in Crit(\L)$, therefore there exists $1 \leq i \leq 2$ and $\ell \in \L$ such that $p=p_i+\ell$. Now, it is easy to see that $p_i+\ell+v=-p_j+\ell=-(p_j-\ell)$ where $j=-i+3$ and $p_j-\ell \in Crit(\L)$. This completes the proof of the claim.

So by a similar argument mentioned in proof of Example~\ref{exa:uni_not_ref}, $(G,R)$ is reflection invariant. Since $\deg_R(\nu)=11$ and $\deg_R(\nu')=10$, we have $g_{\max}=12$ and $g_{\min}=11$. This shows that $(G,R)$ is not uniform.
}
\end{proof}
\addcontentsline{toc}{section}{Acknowledgments}
\section*{Acknowledgments}
We would like to thank Matthew Baker for introducing the problem to us and for directing us with helpful discussions and suggestion of potential approaches toward solving the problem. We also thank Dino Lorenzini, Farbod Shokrieh and Robin Thomas for valuable conversations and useful comments.
%
\addcontentsline{toc}{section}{References}

\end{document}